\theoremstyle{plain}
\newtheorem{thm}{Theorem}[section]
\newtheorem{prop}{Proposition}[section]
\newtheorem{lem}[prop]{Lemma}
\newtheorem{cor}[prop]{Corollary}
\newtheorem{defi}[prop]{Definition}
\newtheorem{rmk}[prop]{Remark}
\numberwithin{equation}{section}
\newcommand {\R} {\mathbb{R}} 
 \newcommand {\N} {\mathbb{N}}
\newcommand {\p} {\partial}
\newcommand {\D} {\Delta}
\newcommand {\supp} {\text{supp}}
\DeclareMathOperator{\Cof}{Cof}
\DeclareMathOperator {\dist} {dist}
\DeclareMathOperator{\F} {\mathcal{F}}
\DeclareMathOperator{\codim}{codim}
\DeclareMathOperator{\corank}{corank}
\DeclareMathOperator{\diam}{diam}
\title[The fractional Calder\'on problem with drift]{The Calder\'on problem for the fractional Schr\"odinger equation with drift}
\author[M. Ceki\'c]{Mihajlo Ceki\'c}
\address{Max-Planck Institute for Mathematics, Vivatsgasse 7, 53111, Bonn, Germany}
\curraddr{}
\email{m.cekic@mpim-bonn.mpg.de}
\author[Y.-H. Lin]{Yi-Hsuan Lin}
\address{Department of Mathematics and Statistics, University of Jyv\"askyl\"a, 40014 Jyv\"askyl\"a, Finland}
\curraddr{}
\email{yihsuanlin3@gmail.com}
\author[A. R\"uland]{Angkana R\"uland}
\address{Max-Planck Institute for Mathematics in the Sciences, Inselstraße 22, 04103 Leipzig, Germany}
\curraddr{}
\email{rueland@mis.mpg.de}
\begin{document}
	
	\maketitle
	
	\begin{abstract}
		We investigate the Calder\'on problem for the fractional Schr\"odinger equation with drift, proving that the unknown drift and potential in a bounded domain can be determined simultaneously and uniquely by an infinite number of exterior measurements. In particular, in contrast to its local analogue, this nonlocal problem does \emph{not} enjoy a gauge invariance.
The uniqueness result is complemented by an associated logarithmic stability estimate under suitable apriori assumptions. Also uniqueness under finitely many \emph{generic} measurements is discussed. Here the genericity is obtained through \emph{singularity theory} which might also be interesting in the context of hybrid inverse problems. Combined with the results from \cite{GRSU18}, this yields a finite measurements constructive reconstruction algorithm for the fractional Calder\'on problem with drift.
The inverse problem is formulated as a partial data type nonlocal problem and it is considered in any dimension $n\geq 1$.

		\medskip
		
		\noindent{\bf Keywords.} Calder\'on's problem, fractional Schr\"odinger equation, unique continuation, Runge approximation, quantitative unique continuation, logarithmic stability, reconstruction, finite measurements, generic determination.
		
		\noindent{\bf Mathematics Subject Classification (2010)}: 35R30, 26A33, 35J10, 35J70
		
	\end{abstract}

	\tableofcontents

	\section{Introduction}
In this article, we consider an inverse problem for a nonlocal Schrödinger equation with drift. Here we seek to study the uniqueness, stability and reconstruction properties in analogy to its local counterpart, which is a type of magnetic Schr\"odinger equation and had been investigated in Nakamura-Sun-Uhlmann \cite{nakamura1995global}. As one of our main results, we prove that: In contrast to its local counterpart, for the fractional Calder\'on problem with drift there is \emph{no} gauge invariance present (see Theorem \ref{thm main}). In particular, this poses an obstruction in possibly extracting information from the nonlocal inverse problem for its local analogue (as $s\to 1$). As our second main result, we prove a \emph{generic}, finite measurements reconstruction, which might also be of interest in the context of hybrid inverse problems (see Theorem \ref{prop:finite_meas} and the following discussions). As a key tool for this, we rely on singularity theory from Whitney \cite{whitney}.

\vskip0.5cm
	
\textbf{The classical Calder\'on problem with drift.} 	
	Before turning to the \emph{nonlocal} problem, let us recall its \emph{local} analogue and the known results on this: In the classical Calder\'on problem for the magnetic Schr\"odinger equation, the objective is to determine the drift and potential coefficients simultaneously. More precisely, for  $n\geq 3$, let $\Omega\subset \R^n$ be a bounded Lipschitz domain, then consider the following Dirichlet boundary value problem 
	\begin{align}
	\begin{split}\label{local drift equation}
	(-\Delta + b\cdot \nabla +c)u & =0 \text{ in }\Omega,\\
	u &=f  \text{ on }\p \Omega,
	\end{split}
	\end{align} 
	where $b$ and $c$ are sufficiently smooth functions which vanish on $\partial \Omega$. 
	Assuming the well-posedness of the boundary value problem \eqref{local drift equation}, one can define boundary measurements given by the \emph{Dirichlet-to-Neumann map} (abbreviated as the DN map in the rest of this paper) 
	$$
	\Lambda_{b,c}:H^{1/2}(\partial \Omega) \to H^{-1/2}(\partial \Omega) \text{ with }\Lambda_{b,c}:f \mapsto \dfrac{\partial u}{\partial \nu},
	$$ 
	where $u\in H^1(\Omega)$ is the unique solution to \eqref{local drift equation} and $\nu$ is the unit outer normal on $\partial \Omega$. 
	
	The Calder\'on problem for \eqref{local drift equation} consists of trying to recover the unknown coefficients $b$, $c$ (which are assumed to be in appropriate function spaces) by the information encoded in the boundary measurement operator $\Lambda_{b,c}$ on $\partial \Omega$. In the case of the local magnetic Schrödinger equation, there is however an \emph{intrinsic obstruction} to the unique identification of these coefficients:
To observe this, consider the substitution $v= e^{\phi} u$ which results in an equation for $v$ which is of a similar form as \eqref{local drift equation}:
	\begin{align}
	\label{eq:gauge}
	\begin{split}
	-\Delta v + (b+ 2\nabla \phi) \cdot \nabla v + (c+\Delta \phi - b\cdot \nabla \phi -|\nabla \phi|^2) v &= 0 \mbox{ in } \Omega,\\
	v&= e^{\phi} f \mbox{ on } \partial \Omega.
	\end{split}
	\end{align}
	If now $\phi = 0$, $\partial_{\nu} \phi = 0$ on $\partial \Omega$, then the DN maps for the old and the new equations \eqref{local drift equation} and \eqref{eq:gauge} coincide; there is a hidden \emph{gauge invariance}. As a consequence, one cannot expect to be able to recover the full information on the coefficients $b$ and $c$ from the knowledge of the Dirichlet-to-Neumann map
 $\Lambda_{b,c}$ on $\p \Omega$. 
 
As a matter of fact, one can at most hope to recover $b$ and $c$ up to the described gauge invariance for the inverse boundary value problem with respect to \eqref{local drift equation}. 
This is indeed the case (c.f. \cite[Theorem 5.4.1]{isakov2017inverse}): If $b_j$ and $c_j$ are compactly supported in a simply connected domain for $j=1,2$, and if their DN maps coincide on the boundary $\partial \Omega$, then one can obtain uniqueness up to the described gauge invariance:
	\begin{align}\label{gauge invariance}
	\mathrm{curl}b_1=\mathrm{curl}b_2 \text{ and }4c_1+b_1\cdot b_1-2\mathrm{div}b_1=4c_2+b_2\cdot b_2-2\mathrm{div}b_2 \text{ in }\Omega.
	\end{align}
We again emphasize that this does not allow us to recover the full fields $b,c$ but only allows one to obtain information up to the above gauge invariance. 

Variants of this inverse boundary value problem have also been studied in \cite{sun1993inverse, salo2004inverse, tolmasky1998exponentially} for the symmetric magnetic Schrödinger operator, in \cite{krupchyk2014uniqueness, haberman2016unique} for the low regularity setting and in \cite{ferreira2007determining} for the setting in which only partial data are available. The setting of flexible geometries was studied in \cite{cekic2017calderon, krupchyk2017inverse}. The case of systems was considered in \cite{eskinsystems01} and Yang-Mills potentials with arbitrary geometry in \cite{cekicym17}. Stability results can be found in \cite{tzou2008stability, joud2009stability} and reconstruction results are given in \cite{salo2006semiclassical}. For more detailed discussions of inverse problems for the magnetic Schr\"odinger equation, we refer to the book \cite[Chapter 5]{isakov2017inverse} as well as to the articles \cite{salo2006inverse,uhlmann2009electrical} and their bibliographies. 

\vskip0.5cm

\textbf{The fractional Calder\'on problem with drift.}
Keeping the situation of the local problem with $s=1$ in the back of our minds, we turn to the Calder\'on problem for the analogous fractional Schr\"odinger equation with drift, which is a \emph{nonlocal} inverse problem. This inverse problem should be regarded as a generalization of the fractional Calder\'on problem, which had first been introduced and investigated in Ghosh-Salo-Uhlmann \cite{ghosh2016calder}. In the sequel, we describe this problem more precisely.

Assume that $\Omega \subset \R^n$ is a bounded Lipschitz domain for $n\geq 1$ and let $\frac{1}{2}<s<1$ (so that the fractional nonlocal operator is dominant). 
	Given a drift $b\in W^{1-s,\infty}(\Omega)^n$ 
	and a potential $c\in L^{\infty}(\Omega)$, we consider the following fractional exterior value problem 
	\begin{align}
	\label{eq:main}
	\begin{split}
	((-\D)^s + b \cdot \nabla + c)u & = 0 \mbox{ in } \Omega,\\
	u& = f \mbox{ in } \Omega_e:=\R^n \setminus \overline{\Omega},
	\end{split}
	\end{align}
	with some suitable exterior datum $f$ (we will present a rigorous mathematical formulation of this in Section \ref{sec:2}, c.f. also Remark \ref{rmk:conditions_s}). Here the fractional Laplacian $(-\Delta)^s$ is given by 
	$$
	(-\Delta)^s u:= \mathcal{F}^{-1}\left\{|\xi|^{2s}\widehat{u}(\xi)\right\}, \text{ for }u\in H^s(\R^n),
	$$
	where $\widehat{u}=\mathcal{F}u$ denotes the Fourier transform of $u$. 
We assume that this problem and its ``adjoint problem'' are well-posed, which is guaranteed by imposing the eigenvalue condition that 

		\begin{align}\label{eigenvalue condition}
	\begin{split}
	& w\in H^s(\R^n)\text{ is a solution of }(-\Delta)^sw+b\cdot \nabla w +cw=0\text{ in }\Omega \mbox{ with } w = 0 \mbox{ in } \Omega_e,\\
	&\text{then we have }w\equiv 0.
	\end{split}
	\end{align}
	
In the sequel, we will always suppose that this condition is satisfied. As in the classical Calder\'on problem with drift, we are interested in recovering the drift coefficient $b \in W^{1-s,\infty}(\Omega)^n$ and the potential $c \in L^{\infty}(\Omega)$ simultaneously from the associated DN map. With slight abuse of notation (for a precise definition we refer to Section \ref{sec:2}), the DN map associated with the nonlocal problem can be thought of as the mapping
\begin{align}\label{DN map}
\Lambda_{b,c}: \widetilde{H}^{s}(\Omega_e) \rightarrow (\widetilde{H}^{s}(\Omega_e))^{\ast}, \ f \mapsto (-\D)^s u|_{\Omega_e},
\end{align} 
where $u$ is the solution to \eqref{eq:main} with exterior data $f$.

The aim of this work is to prove the \emph{global uniqueness} and \emph{stability} of the drift coefficient and the potential for this nonlocal inverse problem. This is in strong contrast to the local case, i.e., the case $s=1$, as in the nonlocal setting the gauge invariance \eqref{gauge invariance} which presented an obstruction to global uniqueness in the local case disappears.

\subsection{The main results}
\label{sec:main_results}	
Let us formulate our main results. As a first property we obtain the global uniqueness of the drift coefficient and the potential for the nonlocal fractional Calder\'on problem with drift:
	
\begin{thm}[Global uniqueness]\label{thm main}
For $n\geq 1$, let $\Omega \subset \R^n$ be a bounded open Lipschitz domain, and let $\frac{1}{2}<s<1$. Let $b_j\in W^{1-s,\infty}(\Omega)^n$ be two drift fields, and $c_j\in L^\infty (\Omega)$ be potentials for $j=1,2$. Given arbitrary open sets $W_1,W_2 \subset \Omega_e$, suppose that the DN maps for the equations 
		$$
		((-\Delta)^s+b_j\cdot \nabla +c_j)u_j=0 \text{ in }\Omega
		$$
		satisfy
		$$
		\Lambda_{b_1,c_1}f|_{W_2}=\Lambda_{b_2,c_2}f|_{W_2}, \text{ for any }f\in C^\infty _c(W_1).
		$$
Then $b_1=b_2$ and $c_1=c_2$ in $\Omega$.
	\end{thm}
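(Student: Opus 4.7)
I would follow the standard Ghosh--Salo--Uhlmann strategy for the fractional Calder\'on problem \cite{ghosh2016calder}, adapted to accommodate the drift term. First, introduce the bilinear form
\[
B_{b,c}(u,v) = \int_{\R^n} (-\Delta)^{s/2} u \, (-\Delta)^{s/2} v \, dx + \int_\Omega (b\cdot \nabla u)\, v \, dx + \int_\Omega c\, u v \, dx,
\]
which characterises the DN map via $\langle \Lambda_{b,c} f, g \rangle = B_{b,c}(u_f, \widetilde g)$ for any $H^s(\R^n)$-extension $\widetilde g$ of $g$. For $f_1 \in C_c^\infty(W_1)$ and $f_2 \in C_c^\infty(W_2)$, let $u_j \in H^s(\R^n)$ be the solution of \eqref{eq:main} for $(b_j, c_j)$ with exterior data $f_1$, and let $v_2 \in H^s(\R^n)$ solve the \emph{adjoint} problem for $(b_2, c_2)$ with exterior data $f_2$. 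Using $v_2$ as the extension of $f_2$ and splitting
$B_{b_1,c_1}(u_1,v_2) - B_{b_2,c_2}(u_2,v_2) = B_{b_2,c_2}(u_1 - u_2, v_2) + (B_{b_1,c_1} - B_{b_2,c_2})(u_1, v_2)$, the first summand vanishes because $u_1 - u_2 \in \widetilde{H}^s(\Omega)$ lies in the kernel of $B_{b_2,c_2}(\,\cdot\,, v_2)$ by the adjoint weak formulation, yielding the Alessandrini-type identity
\[
\langle (\Lambda_{b_1,c_1} - \Lambda_{b_2,c_2}) f_1, f_2 \rangle = \int_\Omega (b_1 - b_2) \cdot \nabla u_1 \, v_2 \, dx + \int_\Omega (c_1 - c_2) u_1 v_2 \, dx,
\]
whose left-hand side vanishes by hypothesis.

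Next I would establish Runge approximation: the sets $\{u_f|_\Omega : f \in C_c^\infty(W_1)\}$ and $\{v_g|_\Omega : g \in C_c^\infty(W_2)\}$ of interior traces of solutions to \eqref{eq:main} and its adjoint are dense in $L^2(\Omega)$. This is obtained by Hahn--Banach: an $h \in L^2(\Omega)$ orthogonal to the first set can be realised, via the adjoint Poisson problem with source $h$ and zero exterior data, as $w \in \widetilde H^s(\Omega)$ for which $(-\Delta)^s w$ vanishes on $W_1$. Since $w$ itself vanishes on $W_1 \subset \Omega_e$, the strong unique continuation property of the fractional Laplacian (which persists for $s \in (1/2,1)$ in the presence of a drift $b \in W^{1-s,\infty}$) forces $w \equiv 0$, hence $h = 0$. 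Combined with interior regularity estimates for the fractional drift equation, this $L^2$-density can be upgraded to strong $H^s(\Omega')$-density for any $\Omega' \Subset \Omega$, which is what is needed to pass to the limit in the gradient term.

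With the density in hand, I fix any Lebesgue point $x_0 \in \Omega$ of $b_j, c_j$ and a cutoff $\psi \in C_c^\infty(\Omega)$ concentrated near $x_0$, and approximate $v_2|_\Omega \approx \psi$ and $u_1|_\Omega \approx a + \beta \cdot (x - x_0)$ in $H^s(\Omega')$ for arbitrary parameters $a \in \R$ and $\beta \in \R^n$. Substituting into the vanishing identity and passing to the limit gives
\[
(b_1 - b_2)(x_0) \cdot \beta \int \psi \,dx + (c_1 - c_2)(x_0) \cdot a \int \psi \,dx = 0,
\]
and the free choice of $a$ and $\beta$ forces $b_1(x_0) = b_2(x_0)$ and $c_1(x_0) = c_2(x_0)$; since $x_0$ was arbitrary, this gives the conclusion.

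The principal technical difficulty is the gradient term $\int (b_1 - b_2) \cdot \nabla u_1 \, v_2$: as $\nabla u_1 \in H^{s-1}(\Omega)$ is only a negative-regularity distribution for $s < 1$, it has to be interpreted as a duality pairing against $(b_1 - b_2) v_2 \in H^{1-s}$. This is precisely where the hypothesis $s > 1/2$ becomes essential, ensuring $1-s < s$ so that the pairing is continuous for $b_j \in W^{1-s,\infty}$ and $v_2 \in H^s$; it is also why plain $L^2$-Runge approximation is insufficient. The upgrade of Runge density to $H^s$ on interior subdomains via bootstrapping is thus the main new ingredient over the pure-potential argument, and is what ultimately breaks the local gauge invariance: any would-be gauge transformation would have to preserve the full bilinear pairing with test gradients, which pins down $b$ and $c$ individually rather than only their gauge-equivalent combinations.
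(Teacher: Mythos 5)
The overall blueprint — derive Alessandrini's identity via the non-symmetric bilinear form and its adjoint, prove a Runge approximation property, then insert special approximants to decouple $b$ from $c$ — is the same as the paper's, and the slightly different splitting you use to derive the identity (subtracting $B_{b_2,c_2}(u_1-u_2,v_2)$ rather than passing through $\Lambda_{b,c}^{\ast}$) is valid. The genuine gap lies in your Runge step. You establish $L^2(\Omega)$-density and then assert that interior regularity estimates upgrade this to strong $H^s(\Omega')$-density for any $\Omega' \Subset \Omega$. This is both questionable and insufficient. It is questionable because interior regularity for the nonlocal operator takes the form $\|u\|_{H^{s+\sigma}(\Omega')} \lesssim \|u\|_{L^2(\Omega)} + \|u\|_{H^{s}(\R^n)}$, a bound involving a \emph{global} norm; applied to differences $u_j-u_k$ it carries the uncontrolled term $\|f_j-f_k\|_{H^s}$, so $L^2(\Omega)$-Cauchy does not imply $H^s(\Omega')$-Cauchy. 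It is insufficient because the gradient term in Alessandrini's identity is the duality pairing
\[
\int_\Omega (b_1-b_2)\cdot\nabla u_1\, v_2\, dx \;=\; \bigl\langle \nabla u_1,\,(b_1-b_2)v_2\bigr\rangle_{H^{s-1}(\Omega)\times \widetilde{H}^{1-s}(\Omega)},
\]
which runs over all of $\Omega$, not $\Omega'$. Smallness of $u_1-(a+\beta\cdot(x-x_0))$ and $v_2-\psi$ in $H^s(\Omega')$ says nothing about their size on $\Omega\setminus\Omega'$, where the solutions are generically large (behaving like $d^s$ near $\partial\Omega$). So the limit passage in the drift term is not justified.

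The correct move, and what the paper does, is to run the very same Hahn--Banach duality argument but targeting density in $\widetilde{H}^s(\Omega)=H^s_{\overline{\Omega}}$ directly: an annihilating functional $F \in (\widetilde{H}^s(\Omega))^{\ast}=H^{-s}(\Omega)$ still produces a solution $\varphi \in \widetilde{H}^s(\Omega)$ of the adjoint equation with $\varphi=(-\Delta)^s\varphi = 0$ on $W$ (the well-posedness result in the paper explicitly allows sources in $(\widetilde{H}^s(\Omega))^{\ast}$, which is precisely the point), and the fractional UCP forces $F\equiv 0$. With remainders $r_j^1$, $r_j^{2,\ast}$ small in the full $\widetilde{H}^s(\Omega)$ norm, one controls $\|\nabla r_j^1\|_{H^{s-1}(\Omega)}\lesssim\|r_j^1\|_{\widetilde{H}^s(\Omega)}$ and $\|(b_1-b_2)r_j^{2,\ast}\|_{\widetilde{H}^{1-s}(\Omega)}\lesssim\|r_j^{2,\ast}\|_{\widetilde{H}^s(\Omega)}$, so the error in the drift pairing vanishes. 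A further wrinkle: your target $a+\beta\cdot(x-x_0)$, left uncut, does not belong to $\widetilde{H}^s(\Omega)$, so even the corrected density statement cannot be applied to it; the paper sidesteps this by approximating compactly supported cutoffs $\psi_1$ and $\psi_{x_k}\in C_c^\infty(\Omega)$ that coincide with $1$, respectively $x_k$, only on $\supp\psi_2$, which also arranges that the cross term $\nabla\psi_1\cdot\psi_2$ vanishes identically — the step that lets the $c$-recovery run without interference from the drift.
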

	
	Theorem \ref{thm main} can be regarded as a \emph{partial data} result for our nonlocal inverse problem. Different from the local case, i.e., $s=1$, there is \emph{no gauge invariance} and thus \emph{no intrinsic obstruction} to uniqueness in this nonlocal Calder\'on problem. We expect that it is possible to improve the regularity assumptions on the drift coefficient and the potential. As our main focus in the present article is however on the striking differences between the local and the nonlocal problems in terms of the existence/absence of a gauge, we do not elaborate on this here but postpone this to a future work.
	
As in previous results on the fractional Calder\'on problem (c.f. \cite{ghosh2016calder}), in this work, we are not using \emph{complex geometrical optics} solutions. Instead, we rely on the following approximation property.

	\begin{thm}[Runge approximation]
		\label{thm: Runge approximation}
		For $n\geq 1$ and $\frac{1}{2}<s<1$, let $\Omega \subset \R^n$ be a bounded open Lipschitz set and $\Omega_1\subset \R^n$ be an arbitrary open set containing $\Omega$ with $\mathrm{int}(\Omega_1 \setminus \overline{\Omega})\neq \emptyset$. 
		\begin{itemize}
			\item[(a)] Let $b\in W^{1-s,\infty}(\Omega)^n$ 
and $c\in L^\infty(\Omega)$. Then, for any $g\in L^2(\Omega)$ and $\epsilon>0$, one can find a solution $u_\epsilon \in H^s(\R^n)$ of 
			\begin{align*}
			\left((-\Delta)^s+b\cdot \nabla +c\right)u_\epsilon =0 \text{ in }\Omega, \text{ with }\mathrm{supp}(u_\epsilon)\subset \overline{\Omega_1}
			\end{align*}
			such that
			\begin{align*}
			\|u_\epsilon - g \|_{L^2(\Omega)}<\epsilon.
			\end{align*}
			
			\item[(b)] If we further assume that $\Omega$ has a $C^\infty$-smooth boundary, $b\in C^\infty_c(\Omega)^n$ and $c\in C^\infty_c(\Omega)$ with $\supp(b), \supp(c)\Subset \Omega$, given any $g\in C^\infty(\overline{\Omega})$, $\epsilon>0$ and $k\in \N$, then there exists a solution  $u_{\epsilon}\in H^{s}(\R^n)$ of 
			\begin{align*}
			\left((-\Delta)^s+b\cdot \nabla +c\right)u_\epsilon =0 \text{ in }\Omega, \text{ with }\mathrm{supp}(u_\epsilon)\subset \overline{\Omega_1}
			\end{align*}
			such that
			\begin{align*}
			\|d^{-s}(x) u_{\epsilon} -g\|_{C^k(\Omega)} \leq \epsilon.
			\end{align*}
			The function $d(x)$ is any $C^\infty$-smooth function defined in $\overline{\Omega}$ such that $d>0$ in $\Omega$ and $d(x)=\mathrm{dist}(x,\partial \Omega)$ whenever $x$ is near $\partial \Omega$.
		\end{itemize}
		
	\end{thm}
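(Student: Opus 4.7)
The plan is to derive both statements from a Hahn--Banach duality argument combined with the strong unique continuation property (UCP) for the fractional Laplacian of Ghosh--Salo--Uhlmann. Write $L=(-\Delta)^s+b\cdot\nabla+c$ and let $L^\ast$ denote its formal $L^2$-adjoint $(-\Delta)^s-\di(b\,\cdot\,)+c$. The eigenvalue condition \eqref{eigenvalue condition} is stable under passing to the adjoint (by Fredholm theory), so both $L$ and $L^\ast$ give well-posed Dirichlet problems on $\widetilde{H}^s(\Omega)$ with zero exterior data.

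\textbf{Part (a).} Set
\[
\mathcal{R} := \bigl\{\, u|_\Omega : u\in H^s(\R^n),\ \supp(u)\subset\overline{\Omega_1},\ Lu=0\ \text{in }\Omega\,\bigr\}.
\]
By Hahn--Banach, density of $\mathcal{R}$ in $L^2(\Omega)$ is equivalent to the implication: if $f\in L^2(\Omega)$ satisfies $(f,u)_{L^2(\Omega)}=0$ for all $u\in\mathcal{R}$, then $f\equiv 0$. Given such an $f$, I solve the adjoint problem $L^\ast\varphi=f$ in $\Omega$, $\varphi=0$ in $\Omega_e$, obtaining $\varphi\in\widetilde{H}^s(\Omega)$. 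A nonlocal Green's identity then yields
\[
0 = (f,u)_{L^2(\Omega)} = \langle L^\ast\varphi,u\rangle - \langle \varphi,Lu\rangle = -\int_{\Omega_e} u\,(-\Delta)^s\varphi\,dx,
\]
where the drift/potential contributions vanish because $\varphi\equiv 0$ outside $\Omega$ and $Lu=0$ inside. Choosing $u|_{\Omega_e}=v\in C_c^\infty(W)$ with $W:=\mathrm{int}(\Omega_1\setminus\overline{\Omega})$ (nonempty by hypothesis) and letting $v$ vary gives $(-\Delta)^s\varphi=0$ in $W$. Since $\varphi=0$ in $W\subset\Omega_e$ too, the strong UCP for $(-\Delta)^s$ forces $\varphi\equiv 0$ on $\R^n$, hence $f=L^\ast\varphi=0$.

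\textbf{Part (b).} The same scheme is implemented in $C^k(\overline{\Omega})$. The new ingredient is a sharp boundary regularity statement for solutions: under the assumed smoothness of $\partial\Omega$ and the compact interior support of $b,c$, any $u\in\mathcal{R}$ with smooth exterior data belongs to $d^s\cdot C^\infty(\overline{\Omega})$ (Grubb / Ros-Oton--Serra type), with quantitative control. Granting this, $\{d^{-s}u|_\Omega : u\in\mathcal{R}\}$ is a subset of $C^k(\overline{\Omega})$, and I would test density against an arbitrary $\mu\in C^k(\overline{\Omega})^\ast$ (a distribution of order $\le k$ with support in $\overline{\Omega}$). The adjoint problem $L^\ast\varphi=d^{-s}\mu$ in $\Omega$, $\varphi=0$ in $\Omega_e$ is solved in a suitable scale of weighted Sobolev spaces; the same Green's identity then transfers the annihilation assumption to $(-\Delta)^s\varphi=0$ on $W$, and UCP closes the loop.

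\textbf{Main obstacle.} For part (a), the delicate technical step is justifying the Green's identity when $b\in W^{1-s,\infty}$ only, so that $\di(b\,\cdot\,)$ has to be interpreted via the bilinear form rather than pointwise; a careful density/approximation argument exploiting that $\varphi\in\widetilde{H}^s(\Omega)$ vanishes on $\partial\Omega$ will be required. The genuinely harder part is (b): in addition to the duality, one must establish a sharp $d^s$-boundary asymptotics for solutions of the \emph{drift} equation (not just the pure fractional Laplacian), and one must set up the adjoint problem with a distributional right-hand side of order $k$ while retaining enough regularity near $\partial\Omega$ to make the pairing $\langle \mu, d^{-s}u\rangle$ and its reformulation as an exterior integral rigorous. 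Both issues rely crucially on the smoothness of $\partial\Omega$ and the interior support of $b$ and $c$.
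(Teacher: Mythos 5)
Your argument is essentially the paper's (Lemma~\ref{lem:Runge}): Hahn--Banach, solve the adjoint problem $B^\ast_{b,c}(\varphi,\cdot)=F(\cdot)$ for $\varphi\in\widetilde{H}^s(\Omega)$, show $F(P_{b,c}f-f)=-B_{b,c}(f,\varphi)$ by working directly with the bilinear forms (which is precisely how the paper circumvents the pointwise Green's identity issue you flag), deduce $\varphi=(-\Delta)^s\varphi=0$ on $W$, and invoke Proposition~\ref{prop strong unique}. This matches.

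\textbf{Part (b).} You have identified the right ingredients (Grubb $d^s$-boundary regularity, Hahn--Banach, UCP) but your concrete route has a gap, and it differs from the paper's. The paper (Proposition~\ref{prop:higher order tools} and Lemma~\ref{lem:Higher Runge}) proves density first in the Fr\'echet space $\mathcal{E}^s(\overline\Omega)=e_\Omega\,d^s\,C^\infty(\overline\Omega)$ equipped with the topology generated by the transmission-space seminorms $\{\|\cdot\|_{H^{s(k)}}\}_k$, \emph{not} the $C^k$ topology. A continuous functional $\mathcal{L}$ on that Fr\'echet space is automatically bounded by a single seminorm $\|\cdot\|_{H^{s(\ell)}}$ and so extends to $(H^{s(\ell)}(\overline\Omega))^\ast$; one then applies the adjoint $\mathcal{T}^\ast$ of the \emph{forward} isomorphism $\mathcal{T}=r_\Omega((-\Delta)^s+b\cdot\nabla+c):H^{s(\ell)}(\overline\Omega)\to H^{\ell-2s}(\Omega)$ to produce $v\in H^{-\ell+2s}_{\overline\Omega}$ with $\mathcal{T}^\ast v=\widetilde{\mathcal{L}}$, and shows $v=(-\Delta)^s v=0$ in $W$. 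This is cleaner than your plan to test against $\mu\in(C^k(\overline\Omega))^\ast$ and solve $L^\ast\varphi=d^{-s}\mu$: for general $\mu$ supported in $\overline\Omega$ (e.g.\ a measure concentrated on $\partial\Omega$) the product $d^{-s}\mu$ is not a well-defined distribution, and even when it is, the solvability of the adjoint Dirichlet problem with such a singular source is not covered by the well-posedness theory of Section~\ref{sec:2}. Moreover, you still have to convert density in the $H^{s(k)}$-topology into density in the $C^k$-topology; the paper does this by a closed-graph argument showing $\mathcal{M}:g\mapsto e_\Omega d^s g$ is a homeomorphism $C^\infty(\overline\Omega)\to\mathcal{E}^s(\overline\Omega)$, a step that does not appear in your sketch.
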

	
A more quantitative Runge approximation for the fractional Schrödinger equation with drift will be discussed in Section \ref{sec:stability} in the context of stability estimates.	
	
	\begin{rmk}
		The qualitative Runge approximation property as a key tool for studying fractional Schrödinger type inverse problems had been introduced in \cite{ghosh2016calder}. In order to infer such a result, the authors of \cite{ghosh2016calder} built on the unique continuation property for fractional Schrödinger equations in the form of Carleman estimates which had been derived in \cite{ruland2015unique}, c.f. also \cite{FF14,FF15,GRSU18,R17,Seo,yu2017unique} for related unique continuation results for fractional Schrödinger equations. For variable coefficient fractional Schrödinger operators, the authors of \cite{ghosh2017calderon} utilized Almgren's frequency function to derive such a property. In the context of nonlocal elliptic equations, this had earlier been employed by \cite{FF14,yu2017unique}, c.f. also Section 7 in \cite{ruland2015unique} for the derivation of unique continuation properties with variable coefficients.
	\end{rmk}

Let us put these results into the context of the literature on the fractional Calder\'on problem: The problem was first introduced by \cite{ghosh2016calder}, where the authors treated the case with $c\in L^{\infty}(\Omega)$, $b=0$ and $s\in(0,1)$, and proved a global uniqueness result for $c$. For more general nonlocal variable coefficient Schrödinger operators, the fractional Calder\'on problem was studied in \cite{ghosh2017calderon}. The techniques based on Runge approximation are strong enough to deal with the case of semilinear equations \cite{LL18} and low regularity, almost critical function spaces for the potential \cite{RS17}. Even \emph{single} measurement results are possible \cite{GRSU18} (c.f. the discussion below). Moreover, these techniques have been extended to other nonlocal problems \cite{Ru17quantitative, cao2018determining} in a slightly different context. Also, for positive potentials, monotonicity inversion formulas have been successfully discovered in \cite{harrach2017nonlocal-monotonicity}. Very recently and independently from our work, uniqueness results have been obtained for equations with nonlocal lower order contributions \cite{BGU18}.
	
In addition to uniqueness, stability is of central importance in inverse problems. Stability results for the fractional Calder\'on problem were first obtained in \cite{RS17, RS18exponential}, where optimal logarithmic stability estimates had been derived (c.f. also \cite{ruland2018lipschitz} for improvements of this if structural apriori conditions like the finiteness of the underlying function space are satisfied).
It is possible to extend the logarithmic estimates to the setting of the fractional Calder\'on problem with drift. Here we obtain the following result:
	
	\begin{thm}[Logarithmic stability]
		\label{thm:stab}
		Let $s\in (\frac{1}{2},1)$, $\Omega \subset \R^n$, $n\geq 1$ be a bounded open smooth domain. Let $\overline{W}_1, \overline{W}_2 \subset \Omega_e$ be such that $\overline{\Omega} \cap \overline{W}_1 = \overline{\Omega} \cap \overline{W}_2 = \emptyset$. 
		Assume that for some constants $M>0$, $\delta>0$
		
		\begin{align*}
		\|b_j\|_{W^{1-s+\delta, \infty}(\Omega)}
		+ \|c_j\|_{H^s(\Omega)} + \|c_j\|_{W^{1,n+\delta}(\Omega)} \leq M,
		\end{align*}
		and that $\supp(b_j), \supp(c_j) \Subset \Omega$ for $j=1,2$. 
		Then for some constants $\mu>0$ and $C>0$ which depend on $\Omega, W, n, s, M, \delta$, we have
		\begin{align*}
		\|c_1-c_2\|_{H^{-s}(\Omega)} + \|b_1-b_2\|_{H^{-s}(\Omega)}
		\leq C\left|\log(\|\Lambda_{b_1,c_1}-\Lambda_{b_2,c_2}\|_{\ast})\right|^{-\mu},
		\end{align*} 
		if $\|\Lambda_{b_1,c_1}-\Lambda_{b_2,c_2}\|_{\ast} \leq 1$,
		where $\|A\|_{\ast}:= \sup\{ (A f_1, f_2)_{W_2}: \ f_1 \in \widetilde{H}^s(W_1), \ f_2 \in \widetilde{H}^s(W_2)  \}$.
	\end{thm}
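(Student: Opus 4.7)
The plan is to follow the three-step strategy developed in \cite{RS17,RS18exponential} for the fractional Calder\'on problem, adapted to accommodate the drift term. The first ingredient is an Alessandrini-type bilinear identity. Letting $u_1 \in H^s(\R^n)$ solve the direct problem with coefficients $(b_1,c_1)$ and exterior datum $f_1 \in \widetilde H^s(W_1)$, and $\tilde u_2 \in H^s(\R^n)$ solve the formally adjoint problem with principal part $(-\Delta)^s - b_2\cdot \nabla + (c_2 - \mathrm{div}\, b_2)$ and exterior datum $f_2 \in \widetilde H^s(W_2)$, pairing the two weak formulations yields
\begin{align*}
\langle (\Lambda_{b_1,c_1} - \Lambda_{b_2,c_2}) f_1, f_2 \rangle_{W_2}
= \int_\Omega \bigl[(b_1-b_2)\cdot \nabla u_1 + (c_1-c_2) u_1\bigr] \tilde u_2 \, dx.
\end{align*}
The boundary/exterior contributions drop out because $b_j, c_j$ are compactly supported in $\Omega$.

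The second ingredient is a \emph{quantitative} upgrade of the Runge approximation of Theorem \ref{thm: Runge approximation}. In analogy with \cite{RS17}, the plan is to show: for any target $\varphi \in H^s(\R^n)$ supported in $\overline\Omega$ and any $\epsilon \in (0,1)$, there exists a solution $u$ of the direct equation with exterior datum $f \in \widetilde H^s(W_1)$ satisfying
\begin{align*}
\|u - \varphi\|_{H^s(\Omega)} \leq \epsilon, \qquad \|f\|_{\widetilde H^s(W_1)} \leq C\exp(C \epsilon^{-\sigma}),
\end{align*}
for some $\sigma = \sigma(n,s) > 0$, and analogously for the adjoint equation on $W_2$. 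The derivation combines a singular-value-decomposition / duality argument applied to the exterior-to-interior Poisson-type operator with the exponential propagation-of-smallness bound that is the quantitative counterpart of the Carleman-based unique continuation principle of \cite{ruland2015unique}. The a priori bounds on $(b_j, c_j)$ in $W^{1-s+\delta,\infty}$, $H^s$, and $W^{1,n+\delta}$ enter here to guarantee uniform bounds on the associated Poisson operators and on their singular values.

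Plugging the approximating solutions $u_1 \approx \varphi_1, \tilde u_2 \approx \varphi_2$ into the Alessandrini identity and using the a priori bounds to control the error terms (this is where the extra $\delta$ of regularity on $b_j$ and $c_j$ is crucial, since the $H^s$ approximation error for $u_1$ must be multiplied against $\nabla b$-like quantities via H\"older/interpolation estimates) produces, with $\eta := \|\Lambda_{b_1,c_1} - \Lambda_{b_2,c_2}\|_*$,
\begin{align*}
\left| \int_\Omega \bigl[(b_1-b_2)\cdot \nabla\varphi_1 + (c_1-c_2)\varphi_1\bigr] \varphi_2 \, dx \right|
\leq C\eta \exp(C\epsilon^{-\sigma}) + C(M)\epsilon.
\end{align*}
Optimizing $\epsilon \sim |\log \eta|^{-1/\sigma}$ yields a bound of order $|\log\eta|^{-\mu}$ on the bilinear form. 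Specializing $(\varphi_1,\varphi_2)$ to cut-off plane waves $\chi(x) e^{ix\cdot\xi_1}, \chi(x) e^{ix\cdot\xi_2}$ and taking Fourier transforms, the bracket becomes (up to lower-order cut-off errors) $i\xi_1 \cdot \widehat{(b_1-b_2)}(-\xi_1-\xi_2) + \widehat{(c_1-c_2)}(-\xi_1-\xi_2)$; varying $\xi_1$ at fixed $\xi_1 + \xi_2$ separates the drift and potential contributions, and the Fourier-side bounds assemble to the claimed $H^{-s}$ estimates after one interpolation step between the freshly obtained weak bound and the a priori bound of size $M$.

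The main obstacle is precisely the presence of the gradient term $(b_1-b_2)\cdot\nabla u_1$ in the Alessandrini identity: this forces the quantitative Runge approximation to converge in $H^s$ (or at least in $H^1$) rather than in $L^2$, as in the driftless case. Verifying that the exponential exterior bound $\exp(C\epsilon^{-\sigma})$ survives this strengthening, while simultaneously accommodating the low regularity $W^{1-s+\delta,\infty}$ of $b$ and the intertwining of $b_1-b_2$ with $c_1-c_2$ in the bilinear form, is the key technical difficulty. This is exactly where the assumption $\delta > 0$ is used, both for the higher-norm quantitative approximation and for the final interpolation that converts weak Fourier-side bounds into the stated $H^{-s}$ estimate.
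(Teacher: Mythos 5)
Your proposal correctly reproduces the Alessandrini identity, the idea of quantitative Runge approximation driving the stability estimate, and the optimization $\epsilon \sim |\log\eta|^{-1/\sigma}$. But the plan has one genuine gap, which you yourself flag as ``the key technical difficulty'' without resolving it, and the way you propose to address it would not work.

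You state that the drift term forces the quantitative Runge approximation to converge in $H^s(\Omega)$, or even $H^1(\Omega)$, rather than in $L^2(\Omega)$, and you propose to prove exactly such an $H^s$-approximation with exterior cost $\exp(C\epsilon^{-\sigma})$. This is not attainable by the singular-value-decomposition argument you cite: that argument produces a compact operator from the \emph{compact} embedding $H^s_{\overline\Omega} \hookrightarrow H^{s-\delta}_{\overline\Omega}$ with some loss $\delta>0$, and the resulting approximation estimate is of the form
\begin{align*}
\|P_{b,c}f_\epsilon - f_\epsilon - \bar v\|_{H^{s-\delta}_{\overline\Omega}} \leq \epsilon\, \|\bar v\|_{H^s_{\overline\Omega}}, \qquad \|f_\epsilon\|_{H^s_{\overline W}} \leq C e^{C\epsilon^{-\mu}} \|\bar v\|_{H^{s-\delta}_{\overline\Omega}},
\end{align*}
i.e.\ the error is necessarily measured in a space strictly weaker than $H^s$ (this is Proposition 5.4 in the paper). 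An $H^s$ (let alone $H^1$) approximation with exponential cost would require removing the loss, and there is no mechanism in the quantitative unique continuation machinery of \cite{RS17,ruland2015unique} that does this. The actual resolution, which your argument misses, is that the weaker $H^{s-\delta}$ approximation is already enough to control the drift term: one pairs $\langle \nabla r^1_j, (b_1-b_2)\,u^{2,\ast}_j\rangle$ using the duality between $H^{s-1-\delta}(\Omega)$ and $\widetilde H^{1-s+\delta}(\Omega)$, and then $\|\nabla r^1_j\|_{H^{s-1-\delta}} \lesssim \|r^1_j\|_{H^{s-\delta}}$ and $\|(b_1-b_2)u^{2,\ast}_j\|_{\widetilde H^{1-s+\delta}} \lesssim \|b_1-b_2\|_{W^{1-s+\delta,\infty}}\,\|u^{2,\ast}_j\|_{\widetilde H^{s-\delta}}$ (using $1-s+\delta < s-\delta$ for $\delta < (2s-1)/2$). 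This is precisely where the extra $\delta$ of a priori regularity on the drift enters; in your plan that hypothesis is used only vaguely (``via H\"older/interpolation estimates''). Without this duality observation your strategy stalls on an approximation result you cannot prove.

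Secondarily, your separation of $b$ and $c$ via cut-off plane waves and Fourier-side linear algebra is a genuinely different route from the paper, which proceeds sequentially: first recover $c$ with test targets $\psi_1 \equiv 1$ (so the drift contribution carries only remainder terms), then recover $b$ with $\psi_1 x_k$ and absorb the residual potential term by interpolating the already-obtained bound for $\|c_1-c_2\|_{H^{-s}}$ against the $W^{1,n+\delta}$ and $H^s$ a priori bounds. The sequential route avoids the cut-off errors inherent in your plane-wave substitution and yields the $H^{-s}$ bound directly by duality against arbitrary $g \in \widetilde H^s(\Omega)$, with no need for a separate Fourier-to-Sobolev conversion at the end. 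Either decomposition could in principle be made to work, but you should first repair the quantitative approximation step as described above.
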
  
	
	As in \cite{RS17} this relies on quantitative Runge approximation arguments, which are derived from quantitative unique continuation properties. We adapt the arguments from \cite{RS17} to infer these results for the fractional Schr\"odinger equation with drift.
	
Last but not least, based on the higher order Runge approximation property (Theorem \ref{thm: Runge approximation} (b)), we can deduce \emph{finite measurements} uniqueness results for the fractional Calder\'on problem with drift. 
	
\begin{thm}[Finite measurements uniqueness]
		\label{prop:finite_meas}
		Let $\Omega \subset \R^n$ be a bounded domain with a $C^\infty$-smooth boundary. Let $W \subset \Omega_e$ be an open, smooth set such that $\overline{W}\cap \overline{\Omega} = \emptyset$. Let $s\in (\frac{1}{2},1)$ and assume that $b_j\in C^{\infty}_c(\Omega)^n$, $c_j\in C^{\infty}_c(\Omega)$ satisfy \eqref{eigenvalue condition} with $\supp(b_j), \supp(c_j)\Subset \Omega$ for $j=1,2$.  
There exist $n+1$ exterior data $f_1,\dots,f_{n+1}\in C^{\infty}_c(W)$ such that if
		\begin{align*}
		\Lambda_{b_1,c_1}(f_l) = \Lambda_{b_2,c_2}(f_l) \mbox{ for } l \in\{1,\dots,n+1\},
		\end{align*}
		then $b_1 = b_2$ and $c_1=c_2$.
		Moreover, the set of exterior data $f_1,\dots,f_{n+1}$, which satisfies this property forms an open and dense subset in $C^{\infty}_{ c}(W)$.
	\end{thm}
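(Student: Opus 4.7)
The plan is to reduce the DN map equality to a pointwise linear identity for the coefficient differences, and then to exhibit generic exterior data for which the corresponding matrix of solution values and gradients is nondegenerate on a dense subset of $\Omega$. Exactly as in the proof of Theorem \ref{thm main}, assume $\Lambda_{b_1,c_1}(f_l) = \Lambda_{b_2,c_2}(f_l)$ for $f_l \in C^\infty_c(W)$ and let $u^{(l)}_j$ be the corresponding solutions. The difference $w_l := u^{(l)}_1 - u^{(l)}_2$ vanishes together with its fractional Laplacian on $\Omega_e$, so the strong UCP for $(-\Delta)^s$ forces $w_l \equiv 0$. Writing $u^{(l)} := u^{(l)}_1 = u^{(l)}_2$ and subtracting the bulk equations gives the pointwise identity
\begin{equation*}
(b_1 - b_2)(x) \cdot \nabla u^{(l)}(x) + (c_1 - c_2)(x)\, u^{(l)}(x) = 0 \quad \text{in } \Omega,\ l = 1,\dots,n+1.
\end{equation*}
Equivalently, $M(x)\xi(x) = 0$ on $\Omega$, where $\xi := (b_1-b_2,\,c_1-c_2) \in \R^{n+1}$ and $M(x) \in \R^{(n+1)\times(n+1)}$ has $l$-th row $(\nabla u^{(l)}(x),\,u^{(l)}(x))$. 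Since $\xi$ is continuous, it is enough to arrange that $\det M \neq 0$ on a dense subset of $\Omega$.

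For the concrete construction I would apply Theorem \ref{thm: Runge approximation}(b) to the operator $(-\Delta)^s + b_1\cdot \nabla + c_1$ with target profiles $g_1 \equiv 1$ and $g_{l+1}(x) = x_l$ for $l=1,\dots,n$, producing solutions with $\|d^{-s} u^{(l)} - g_l\|_{C^k(\Omega)} \leq \epsilon$. For the idealized profiles $u^{(l)} = d^s g_l$, a direct computation---factor $d^{s-1}$ from each of the first $n$ columns of $M$ and $d^s$ from the last column, then subtract $x_l$ times the first row from row $l+1$---yields
\begin{equation*}
\det M(x) = (-1)^n d(x)^{s(n+1)},
\end{equation*}
which is strictly nonzero on the open set $\Omega$. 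Using the $C^k$-smallness of the Runge error, $\det M$ for the actual solutions stays close to this expression in $C^0$ on compact subsets of $\Omega$, and is therefore non-vanishing on $\{d \geq \delta\}$ once $\epsilon$ is chosen small enough depending on $\delta$.

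The main obstacle is that this idealized determinant degenerates like $d^{s(n+1)}$ as $d \to 0$, while the supports of $b_2,c_2$ are only compactly contained in $\Omega$ with no uniform distance to $\partial \Omega$; hence a single concrete choice of $f$'s cannot suffice. To overcome this, I would upgrade to a genericity statement by combining Baire category with Whitney's jet transversality theorem. For each fixed $x_0 \in \Omega$, the map $f \mapsto \det M_f(x_0)$ is a continuous polynomial of degree $n+1$ in $f$ through the bounded solution operator; by the Runge density of part (b) applied at the single point $x_0$ (with suitably adapted $g_l$'s), this polynomial is not identically zero, so the set $G_{x_0} := \{f : \det M_f(x_0) \neq 0\}$ is open and dense in $C^\infty_c(W)^{n+1}$. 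Intersecting over a countable dense family $\{x_k\} \subset \Omega$ yields a dense $G_\delta$ set of $f$'s for which $\det M_f(x_k) \neq 0$ for all $k$, and continuity then gives $\det M_f \neq 0$ on an open dense subset of $\Omega$, forcing $\xi \equiv 0$ in $\Omega$. Openness of the admissible set of $f$'s in $C^\infty_c(W)^{n+1}$ follows by observing, in the spirit of Whitney's singularity theory, that for $f$ in an open set the $1$-jet map $x \mapsto j^1(u^{(1)}_f,\dots,u^{(n+1)}_f)(x)$ is transverse to the codimension-$1$ singular variety $\Sigma = \{\det = 0\} \subset \R^{(n+1)^2}$; transversality is stable under small perturbations of $f$ and forces $\{\det M_f = 0\}$ to be an $(n-1)$-dimensional, hence nowhere dense, subset of $\Omega$.
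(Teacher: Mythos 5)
Your reduction to the pointwise linear system $M(x)\xi(x)=0$ with $\xi=(b_1-b_2,c_1-c_2)$ and the idea of making $\det M$ generically nonvanishing match the paper's strategy (in fact the paper's preliminary Section 6.2 carries out precisely your idealized computation with $g_1=1$, $g_{l+1}=x_l$). However, the genericity argument -- the part where the theorem actually makes a substantive claim -- has genuine gaps.

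First, the Baire category argument gives a dense $G_\delta$ set of good data, but the theorem asserts an \emph{open and dense} set; these are different, and your attempt to supply openness separately does not succeed. You observe that $\Sigma=\{\det=0\}\subset\R^{(n+1)^2}$ is a \emph{singular} variety, but then treat it as if it were a smooth hypersurface to which the $1$-jet map can be made transverse. The singular locus (matrices of corank $\geq 2$) has codimension only $4$ inside the matrix space; for $n\geq 4$ this is less than $n$, so within the $1$-jet space one cannot avoid or control this stratum by dimension counting. This is precisely the obstruction the paper diagnoses at the start of the Appendix: the equations $\nabla h=0$ are automatically satisfied on the corank-$\geq 2$ stratum, so the naive $1$-jet bad set has defect too small to apply Whitney's Lemma 9a when $n\geq 4$. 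To fix this the paper is forced to work in the space of $k(n)$-jets with $k(n)=\lceil\sqrt{n+1}\,\rceil$, define the bad set $S_{k(n)}$ as the locus where $h$ and its derivatives up to order $k(n)-1$ vanish, and then prove by a careful corank-induction (Lemma \ref{lem:genericity}) that this set is a manifold collection of defect $n+1$. The admissible set $\mathcal{F}$ is then the set of data for which $h(P_{b,c}f_1,\dots,P_{b,c}f_{n+1})$ vanishes to order at most $k(n)-2$ at each point of $K$: openness is built into this definition, recoverability of $b,c$ follows because finite-order vanishing makes the zero set of $h$ Lebesgue-null (Bär's zero set lemma, Lemma \ref{lem:aux_recov}), and density comes from Whitney's avoidance Lemma \ref{lem:9a} together with the Runge approximation of polynomials of degree $\leq k(n)$ by solutions -- not from a Thom-type transversality theorem, which would not apply directly since the $u^{(l)}$ are constrained to be solutions of the PDE.

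Separately, your stated obstacle -- that the supports of $b_2,c_2$ have ``no uniform distance to $\partial\Omega$'' -- is a misreading: $\supp(b_j),\supp(c_j)\Subset\Omega$ is exactly a uniform-distance condition, and your idealized $\det M=(-1)^n d^{s(n+1)}$ is bounded below on any such compact $K$. The single concrete choice does work to produce a nonempty open set of good data; what it fails to produce is \emph{density}, and that is where the singularity-theory machinery is indispensable.
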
  
		
		This is analogous to the single measurement results in \cite{GRSU18} for the fractional Schr\"odinger equation, c.f. also \cite{cao2017simultaneously} for a single measurement result on the detection of an embedded obstacle. However, compared to \cite{GRSU18} a word of caution is needed here: In contrast to the result from \cite{GRSU18} it is \emph{not} possible to work with an arbitrary nontrivial set of measurements $f_1,\dots,f_{n+1}$. The data $f_1,\dots,f_{n+1}$ have to be chosen appropriately from a set which depends on the unknowns $b$, $c$. This is similar to results on hybrid inverse problems, c.f. \cite{BU13, A15}. 
		
		While the dependence of the admissible exterior data on the unknown drift field and potential seems like a serious restriction at first sight, we emphasise that by proving that the data $f_1,\dots,f_{n+1}$ can be chosen in an \emph{open} and \emph{dense} set in $C_{c}^{\infty}(W)$, we show that the set of admissible exterior data is very large: Given a (random) exterior measurement $f_1,\dots,f_{n+1} \in C_c^{\infty}(W)$, our result states that an arbitrarily small perturbation of this yields an admissible exterior datum from which we can reconstruct the drift field $b$ and potential $c$. This might also be of interest in the setting of hybrid inverse problems for which we could not find a statement on an \emph{open} \emph{and} \emph{dense} set of admissible measurements. We plan to address this in future research.

For an overview about the fractional Calder\'on problem, we refer to the survey  \cite{salo2017fractional}.

\subsection{Outline of the remaining article}	
The paper is organized as follows. In Section \ref{sec:2}, we review the notion of a weak solutions of the fractional Schr\"odinger equation with drift. With this at hand, we define the DN map rigorously. Section \ref{sec:3} demonstrates the $L^2$-Runge approximation property, which proves Theorem \ref{thm: Runge approximation}(a). We will prove the global uniqueness result of Theorem \ref{thm main} in Section \ref{sec:4}, which shows that the nonlocal Calder\'on problem does not enjoy a gauge invariance in contrast to its local analogue. In Section \ref{sec:stability}, we also prove the stability result of Theorem \ref{thm:stab} for the fractional Calder\'on problem with drift and potential with respect to the associated DN maps. In Section 6, in the end of this note, we present the proof of Theorem \ref{prop:finite_meas}, where we prove several points on the reconstruction from finitely many exterior measurements. In addition, we study generic unique determination results via singularity theory in the Appendix, which is useful to understand the open and dense subset for the exterior data stated in Theorem \ref{prop:finite_meas}.

	\section{The fractional Schr\"odinger equation with drift}\label{sec:2}
	In this section, we recall the relevant function spaces, prove the well-posedness of the fractional Schr\"odinger equation with drift and introduce and derive properties of the Dirichlet-to-Neumann map associated with \eqref{eq:main}.

	\subsection{Preliminaries}
		We begin by recalling the relevant fractional Sobolev spaces on (bounded) domains. We define the $L^2$-based fractional Sobolev spaces as follows: for $0<s<1$, we consider the fractional Sobolev spaces $H^s(\R^n)=W^{s,2}(\R^n)$ with the norm 
	\begin{align*}
	\|u\|_{H^s(\R^n)}:=\left\|\mathcal{F}^{-1}\left\{\langle \xi \rangle^s \widehat{u} \right\}\right\|_{L^2(\R^n)},
	\end{align*}
	where $\left\langle \xi\right\rangle =(1+|\xi|^{2})^{\frac{1}{2}}$.
	Let $\mathcal{O} \subset \mathbb{R}^{n}$ be an arbitrary open set and $0<s<1$, then we define:
	\begin{align*}
	H^{s}(\mathcal{O}) & :=\{u|_{\mathcal{O}};\,u\in H^{s}(\mathbb{R}^{n})\},\\
	\widetilde{H}^{s}(\mathcal{O}) & :=\text{closure of \ensuremath{C_{c}^{\infty}(\mathcal{O})} in \ensuremath{H^{s}(\mathbb{R}^{n})}},\\
	H_{0}^{s}(\mathcal{O}) & :=\text{closure of \ensuremath{C_{c}^{\infty}(\mathcal{O})} in \ensuremath{H^{s}(\mathcal{O})}},
	\end{align*}
	and 
	\[
	H_{\overline{\mathcal{O}}}^{s}:=\{u\in H^{s}(\mathbb{R}^{n}); \quad \text{ with }\mathrm{supp}(u)\subset\overline{\mathcal O}\}.
	\]
	The norm of $H^{s}(\mathcal{O})$ is denoted by 
	\[
	\|u\|_{H^{s}(\mathcal{O})}:=\inf\left\{ \|v\|_{H^{s}(\mathbb{R}^{n})};v\in H^{s}(\mathbb{R}^{n})\mbox{ and }v|_{\mathcal{O}}=u\right\} .
	\]
	It is known that $\widetilde{H}^{s}(\mathcal{O})\subseteq H_{0}^{s}(\mathcal{O})$,
	and that $H_{\overline{\mathcal{O}}}^{s}$ is a closed subspace of
	$H^{s}(\mathbb{R}^{n})$. Further we have for arbitrary open sets $\mathcal{O}$
	\begin{align*}
\left(H^{s}(\mathcal{O})\right)^{\ast}=\widetilde{H}^{-s}(\mathcal{O})\mbox{ and }\left(\widetilde{H}^{s}(\mathcal{O})\right)^{\ast}=H^{-s}(\mathcal{O}).
	\end{align*}
	
	\begin{rmk}\label{rmk on Sobolev}
		When $\mathcal{O}\subset \R^n$ is a bounded Lipschitz domain, we have that for any $s\in\mathbb{R}$, 
		\[
		\begin{split} & \widetilde{H}^{s}(\mathcal{O})=H_{\overline{\mathcal{O}}}^{s}\subseteq H_{0}^{s}(\mathcal{O}).
		\end{split}
		\] 
If $s>-\frac{1}{2}$ and $s\notin \{\frac{1}{2},\frac{3}{2},\dots\}$ the last inclusion also becomes in equality.
	\end{rmk}

We further denote the homogeneous fractional Sobolev spaces as $\dot{H}^s(\R^n)$, where
\begin{align*}
\dot{H}^s(\R^n):=\{u: \R^n \rightarrow \R; \  \|\F^{-1} \{ |\xi|^s \hat{u} \} \|_{L^2(\R^n)} <\infty\}.
\end{align*}	
We define the associated semi-norm as
\begin{align*}
\|u\|_{\dot{H}^s(\R^n)}:=\|\F^{-1} \{ |\xi|^s \hat{u} \} \|_{L^2(\R^n)}.
\end{align*}
Note that the norm $\|\cdot \|_{H^s(\R^n)}$ is equivalent to the norm $\|\cdot \|_{L^2(\R^n)}+\|\cdot\|_{\dot{H}^s(\R^n)}$.	For a more detailed introduction to fractional Sobolev spaces and related results, we refer the readers to \cite{di2012hitchhiks} and \cite{mclean2000strongly}.
	
	Since we will use this for our drift fields, we also recall the $L^p$ based fractional Sobolev spaces: we set $\|u\|_{W^{s,p}(\R^n)} := \|\langle D \rangle^s u \|_{L^p(\R^n)}$, where $\langle \xi \rangle = (1+|\xi|^2)^{1/2}$ and $m(D) u = \mathcal{F}^{-1}\{m(\xi) \widehat{u}(\xi)\}$ for $m\in C^{\infty}(\R^n)$ such that $m$ and all its derivatives are polynomially bounded, and $u$ is a tempered distribution. For an open set $\mathcal{O}\subset \R^n$ and $p>1$, we then define the space $W^{s,p}(\mathcal{O})$ by
	\begin{align*}
	W^{s,p}(\mathcal{O}) = \{u|_{\mathcal{O}}; \ u \in W^{s,p}(\R^n)\}.
	\end{align*}
	This is equipped with the associated norm
	\begin{align*}
	\|u\|_{W^{s,p}(\mathcal{O})} = \inf\{\|w\|_{W^{s,p}(\R^n)}; \ w \in W^{s,p}(\R^n), \ w|_{\mathcal{O}} = u\}.
	\end{align*}
	We also define 
    \begin{align*}	
	W^{s,p}_0(\mathcal{O}):= \mbox{ closure of } C_c^{\infty}(\mathcal{O}) \mbox{ in } W^{s,p}(\mathcal{O}).
	\end{align*}
	In the sequel, we will only use these more general $W^{s,p}$ function spaces to quantify the size of the drift field $b$.

We conclude this section by recalling a fractional Poincar\'e type inequality:
	
	\begin{lem}
		\label{lem:frac_Poinc}
		Let $n\geq 1$ and let $s\in (0,1)$. Assume that $\Omega \subset \R^n$ is open and bounded. Then, there exists a constant $C>0$ such that
		\begin{align*}
		\|v\|_{L^2(\Omega)} \leq C (\diam(\Omega))^{s} \|(-\D)^{s/2} v\|_{L^2(\R^n)} \mbox{ for } v \in \widetilde{H}^s(\Omega),
		\end{align*}
     where  $\diam(\Omega)$ denotes the diameter of $\Omega$.
	\end{lem}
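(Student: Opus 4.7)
The plan is to prove this by a Fourier-space splitting argument, which works uniformly in any dimension $n \geq 1$ and for any $s \in (0,1)$ without having to separate cases according to whether $n > 2s$, $n = 2s$, or $n < 2s$ (which would arise if one tried to apply fractional Sobolev embedding together with Hölder's inequality on $\Omega$).

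First I would extend $v \in \widetilde{H}^s(\Omega)$ by zero to all of $\R^n$; by definition of $\widetilde{H}^s(\Omega)$ this extension lies in $H^s(\R^n)$ and is supported in $\overline{\Omega}$. By Plancherel's theorem,
\begin{align*}
\|v\|_{L^2(\Omega)}^2 = \|v\|_{L^2(\R^n)}^2 = \int_{|\xi|\leq R} |\widehat{v}(\xi)|^2\,d\xi + \int_{|\xi|>R}|\widehat{v}(\xi)|^2\,d\xi
\end{align*}
for any $R>0$ to be chosen.

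For the high-frequency part I would simply use
\begin{align*}
\int_{|\xi|>R}|\widehat{v}(\xi)|^2\,d\xi \leq R^{-2s}\int_{\R^n}|\xi|^{2s}|\widehat{v}(\xi)|^2\,d\xi = R^{-2s}\|(-\Delta)^{s/2}v\|_{L^2(\R^n)}^2.
\end{align*}
For the low-frequency part, since $v$ is compactly supported in $\Omega$, Cauchy--Schwarz gives $\|v\|_{L^1(\R^n)}\leq |\Omega|^{1/2}\|v\|_{L^2(\R^n)}$, hence $\|\widehat{v}\|_{L^\infty}\leq |\Omega|^{1/2}\|v\|_{L^2}$, and therefore
\begin{align*}
\int_{|\xi|\leq R}|\widehat{v}(\xi)|^2\,d\xi \leq c_n R^n |\Omega|\,\|v\|_{L^2(\R^n)}^2.
\end{align*}
Combining both estimates, I would choose $R = R(|\Omega|)$ so that $c_n R^n|\Omega| = \tfrac{1}{2}$ (absorbing the first term on the left), yielding
\begin{align*}
\|v\|_{L^2(\Omega)}^2 \leq C|\Omega|^{2s/n}\|(-\Delta)^{s/2}v\|_{L^2(\R^n)}^2.
\end{align*}

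Finally, the trivial bound $|\Omega| \leq c_n(\diam(\Omega))^n$ (since $\Omega$ is contained in a ball of radius $\diam(\Omega)$) converts the volume factor into a diameter factor and produces the claimed constant $C(\diam(\Omega))^s$. There is no real obstacle here; the only subtle point is to notice that the naive route through fractional Sobolev embedding $\dot{H}^s\hookrightarrow L^{2n/(n-2s)}$ would fail in the critical and super-critical regimes $n\leq 2s$ (relevant when $n=1$ and $s\geq \tfrac12$, which is exactly the range used throughout the paper), and the Fourier-splitting argument circumvents this issue entirely.
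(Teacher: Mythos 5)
Your Fourier-splitting argument is correct and self-contained. After extending $v$ by zero (so that $v \in H^s(\R^n)$ with $\mathrm{supp}\,v \subset \overline{\Omega}$), the high-frequency tail is controlled by $R^{-2s}\|(-\Delta)^{s/2}v\|_{L^2}^2$, the low-frequency block by $c_n R^n |\overline{\Omega}|\,\|v\|_{L^2}^2$ via $\|\widehat v\|_{L^\infty}\le \|v\|_{L^1}\le |\overline\Omega|^{1/2}\|v\|_{L^2}$ (you write $|\Omega|$ where $|\overline\Omega|$ is what Cauchy--Schwarz gives, but this is immaterial since $|\overline\Omega|\le c_n(\diam\Omega)^n$ anyway), and choosing $c_nR^n|\overline\Omega|=\tfrac12$ absorbs the low-frequency term and produces the diameter-scaled constant. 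Nothing is missing.

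This is, however, a genuinely different route from the paper's. The paper (following the appendix of \cite{RTZ18}) works entirely on the physical side via the Caffarelli--Silvestre extension $\widetilde v$ of $v$: it uses the fundamental theorem of calculus along a dog-leg path from $(x,0)$ up to $(x,r)$, across to $(x',r)$ with $x'\notin\overline\Omega$, and back down to $(x',0)$ where $\widetilde v$ vanishes, inserts the degenerate weight $t^{(1-2s)/2}$, applies H\"older, squares and averages in $r\in(r_0/2,2r_0)$ with $r_0=\diam\Omega$, integrates in $x$, and finally invokes the Caffarelli--Silvestre energy identity $\|x_{n+1}^{(1-2s)/2}\nabla\widetilde v\|_{L^2(\R^{n+1}_+)}\sim\|v\|_{\dot H^s(\R^n)}$. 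Your Fourier argument is shorter, more elementary, and avoids the extension machinery entirely; it also makes the $(\diam\Omega)^s$ scaling visible through a one-line dimensional analysis. The extension-based proof, by contrast, has the virtue of being robust in settings where one cannot pass to the Fourier side globally (e.g.\ for variable-coefficient nonlocal operators admitting a degenerate-elliptic extension), which is presumably why the paper presents it, given that the extension is used elsewhere in the article (notably in the quantitative Runge approximation of Section~5). You are also right that your argument sidesteps any case distinction at the critical/supercritical Sobolev threshold $n\le 2s$ relevant for $n=1$, $s>\tfrac12$, though the paper's extension proof likewise avoids this issue.
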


	We present the proof for self-containedness but follow the idea from the appendix in \cite{RTZ18}.
	
	\begin{proof}[Proof of Lemma \ref{lem:frac_Poinc}]
		We first assume that $v\in C_c^{\infty}(\Omega)$. The result will then follow by density of $C_c^{\infty}(\Omega)$ in $\widetilde{H}^s(\Omega)$.
		Let $x\in \Omega$ be arbitrary and let $x' = x + 2 \diam(\Omega) \frac{x}{|x|}$. Let further $\widetilde{v}$ be the Caffarelli-Silvestre \cite{caffarelli2007extension} extension of $v$, i.e. let $\widetilde{v}$ be the solution of 
		\begin{align*}
		\nabla \cdot x_{n+1}^{1-2s} \nabla \widetilde{v} & = 0 \mbox{ in } \R^{n+1}_+,\\
		\widetilde{v} & = v \mbox{ on } \R^n \times \{0\}.
		\end{align*}
		Then the fundamental theorem of calculus and the support condition for $v$ imply that for any $r\in (\diam(\Omega),\infty)$ 
		\begin{align*}
		|v(x)| & = |\widetilde{v}(x,0)|
		\leq |\widetilde{v}(x,r)| + \int\limits_{0}^{r} |\p_{n+1} \widetilde{v}(x,t)| dt\\
		& \leq |\widetilde{v}(x',r)| + \int\limits_{0}^{1}|\nabla' \widetilde{v}(tx + (1-t)x',r) |  |x-x'|  dt + \int\limits_{0}^{r} |\p_{n+1} \widetilde{v}(x,t)| dt\\
		&\leq \int\limits_{0}^r |\p_{n+1} \widetilde{v}(x',t)|+ \int\limits_{0}^{r} |\p_{n+1} \widetilde{v}(x,t)| dt + \int\limits_{0}^{1}|\nabla' \widetilde{v}(tx + (1-t)x',r) |  |x-x'|  dt.
		\end{align*}
		Next we insert the weights $t^{\frac{1-2s}{2}}$, apply Hölder's inequality,  estimate $|x'-x| \leq Cr$ and fix $r \in (r_0, 2r_0)$, where $r_0 = \diam(\Omega)$:
		
		\begin{align*}
		|v(x)|& \leq  \int\limits_{0}^r t^{\frac{2s-1}{2}}t^{\frac{1-2s}{2}}|\p_{n+1} \widetilde{v}(x',t)| dt + \int\limits_{0}^{r}  t^{\frac{2s-1}{2}}t^{\frac{1-2s}{2}} |\p_{n+1} \widetilde{v}(x,t)| dt \\
		& \quad + \int\limits_{0}^{1}r^{\frac{1-2s}{2}} |\nabla' \widetilde{v}(tx + (1-t)x',r) |  |x-x'|  r^{\frac{2s-1}{2}}  dt\\
		& \leq Cr^s\left( \left(\int\limits_{0}^{r} x_{n+1}^{1-2s} |\p_{n+1} \widetilde{v}(x',x_{n+1})|^2 dx_{n+1} \right)^{\frac{1}{2}} + \left( \int\limits_{0}^r x_{n+1}^{1-2s} |\p_{n+1} \widetilde{v}(x,x_{n+1})|^2 dx_{n+1} \right)^{\frac{1}{2}} \right.\\
		& \quad \left. + r^{\frac{1}{2}}\left( \int\limits_{0}^1 r^{1-2s} |\nabla' \widetilde{v}(t x + (1-t)x',r)|^2 dt \right)^{\frac{1}{2}} \right)\\
		& \leq Cr^s_0\left( \left(\int\limits_{0}^{2 r_0} x_{n+1}^{1-2s} |\p_{n+1} \widetilde{v}(x',x_{n+1})|^2 dx_{n+1} \right)^{\frac{1}{2}} + \left( \int\limits_{0}^{2 r_0} x_{n+1}^{1-2s} |\p_{n+1} \widetilde{v}(x,x_{n+1})|^2 dx_{n+1} \right)^{\frac{1}{2}} \right.\\
		& \quad \left. +  r^{\frac{1}{2}}_0\left( \int\limits_{0}^1 r^{1-2s} |\nabla' \widetilde{v}(t x + (1-t)x',r)|^2 dt \right)^{\frac{1}{2}} \right).
		\end{align*}

		Here the constant $C>0$ in particular depends on $s$.
		Next we square the estimate and integrate it in the normal direction in the interval $r\in (r_0/2, 2r_0)$. This yields 
		
		\begin{align*}
		|v(x)|^2 
		&\leq 
		Cr^{2s}_0\left( \int\limits_{0}^{2 r_0} x_{n+1}^{1-2s} |\p_{n+1} \widetilde{v}(x',x_{n+1})|^2 dx_{n+1}  + \int\limits_{0}^{2 r_0} x_{n+1}^{1-2s} |\p_{n+1} \widetilde{v}(x,x_{n+1})|^2 dx_{n+1}   \right.\\
		& \quad \left. + 
\int\limits_{0}^1 \int\limits_{0}^{2r_0} x_{n+1}^{1-2s} |\nabla' \widetilde{v}(t x + (1-t)x',x_{n+1})|^2 dt dx_{n+1}	\right).
		\end{align*}
		Finally, integrating in tangential directions and using the support condition for $v$, we obtain
		\begin{align*}
		\|v\|_{L^2(\Omega)}^2 \leq C r_0^{2s} \|x_{n+1}^{\frac{1-2s}{2}} \nabla \widetilde{v}\|_{L^2(\R^{n+1}_+)}^2.
		\end{align*}
		Since by the work of Caffarelli-Silvestre \cite{caffarelli2007extension} we have 
		\begin{align*}
		\|x_{n+1}^{\frac{1-2s}{2}} \nabla \widetilde{v}\|_{L^2(\R^{n+1}_+)}^2 = \|v\|_{\dot{H}^{s}(\R^n)},
		\end{align*} 
	and since for $v\in C_c^{\infty}(\Omega)$ we have (in the sense of norm equivalences)
\begin{align*}
\|(-\D)^{s/2} v\|_{L^2(\R^n)} \sim \|v\|_{\dot{H}^{s}(\Omega)}  ,
\end{align*}	
this concludes the proof.
	\end{proof}
	
	\subsection{Well-posedness}
In this section we discuss the well-posedness of the equation \eqref{eq:main} and its dual equation.
	
To this end, let $\Omega\subset \R^n$ be a bounded domain (open and connected), $b\in W^{1-s,\infty}(\Omega)^n$ be a drift coefficient, $c\in L^\infty(\Omega)$ a potential and let $\frac{1}{2}<s<1$ be a constant. For $F\in (\widetilde H^s(\Omega))^{\ast}$ (the dual space of $ H^s(\Omega)$), $f\in H^{s}(\mathbb{R}^n)$, let us consider the following Dirichlet problem 
	\begin{align}\label{Dirichlet problem}
	\begin{split}
	((-\Delta)^s+b\cdot \nabla +c)u=F & \text{ in }\Omega,\\
	u-f & \in \widetilde{H}^s(\Omega).
	\end{split}
	\end{align}
	Given an arbitrary open set $\mathcal O\subset \R^n$ and $v,w\in L^2(\mathcal O)$, we use the notation 
	$$
	(v,w)_{\mathcal O}:=\int_{\mathcal O}vw dx.
	$$
	For $v,w \in C^{\infty}_c(\R^n)$ we define the bilinear form $B_{b,c}(\cdot,\cdot)$ by 
	\begin{align}\label{bilinear form}
	B_{b,c}(v,w):=((-\Delta)^{s/2}v,(-\Delta)^{s/2}w)_{\R^n}+(b\cdot\nabla v,w)_{\Omega}+(cv,w)_\Omega.
	\end{align}
	Notice that the bilinear form $B_{b,c}(\cdot, \cdot)$ is \emph{not} symmetric, so we also introduce the \emph{adjoint} bilinear form as 
	\begin{align}\label{adjoint bilinear form}
	B_{b,c}^{\ast}(v^{\ast},w^{\ast}):=((-\Delta)^{s/2}v^{\ast},(-\Delta)^{s/2}w^{\ast})_{\R^n} +(b v^{\ast}, \nabla w^{\ast} )_\Omega +(c v^{\ast},w^{\ast})_\Omega,
	\end{align}
	for $v^{\ast},w^{\ast} \in C^{\infty}_c(\R^n)$.
	
We remark that the term ``adjoint" is used with a slight abuse of notation here, e.g. as we did not specify the underlying function spaces. We however think of the adjoint bilinear form \eqref{adjoint bilinear form}, as the bilinear form associated with the \emph{adjoint exterior value problem} 
\begin{align}\label{adjoint Dirichlet problem}
\begin{split}
(-\Delta)^s u^{\ast} - \nabla \cdot (b u^{\ast})+cu^{\ast}=F^{\ast} &\text{ in }\Omega, \\ u^{\ast} - f^{\ast} &\in \widetilde{H}^s(\Omega),
\end{split}
\end{align}
for some suitable source $F^{\ast}$ and exterior datum $f^{\ast}$.

\begin{rmk}
\label{rmk:bilin_vs_eq}
We further stress that there is a slight discrepancy between the bilinear form \eqref{adjoint bilinear form} and the adjoint Dirichlet problem \eqref{adjoint Dirichlet problem} in that we have ignored the boundary contribution originating from the integration by parts of \eqref{adjoint Dirichlet problem} in the definition of the bilinear form \eqref{bilinear form}. In the sequel, this will for instance be reflected in the (symmetry) properties of the operator $\Lambda_{b,c}^{\ast}$, c.f. Lemma \ref{lem:symmetry}. More precisely, the DN map associated with the equation \eqref{adjoint Dirichlet problem} would contain a boundary contribution on $\partial \Omega$ if $b$ (or $u^{\ast}$) does not vanish there. This is a consequence of an integration by parts which is used to obtain the weak form of the DN map associated with the equation \eqref{adjoint Dirichlet problem}. For the operator $\Lambda_{b,c}^{\ast}$ which is defined through the ``adjoint bilinear form" \eqref{adjoint bilinear form} this contribution is \emph{not} present, as the integration by parts has already been carried out.
\end{rmk}

Before continuing in our discussion, we observe that under our regularity assumptions the bilinear forms \eqref{bilinear form}, \eqref{adjoint bilinear form} are well-defined on $H^{s}(\R^n)$, for $\frac{1}{2}<s<1$.

\begin{lem}[Boundedness of bilinear forms]
\label{lem:bilin_form}
		For $n\geq 1$, let $\Omega \subset \R^n$ be a bounded Lipschitz domain and $\frac{1}{2}<s<1$. Assume that $b\in W^{1-s,\infty}(\Omega)^n$, $c\in L^{\infty}(\Omega)$. Let $B_{b,c}(\cdot, \cdot)$ and $B^{\ast}_{b,c}(\cdot,\cdot)$ be the bilinear and the adjoint bilinear form defined by \eqref{bilinear form} and \eqref{adjoint bilinear form}, respectively.
		Then, $B_{b,c}(\cdot,\cdot)$ and $B_{b,c}^{\ast}(\cdot,\cdot)$ extend as bounded bilinear forms on $H^s(\R^n)\times H^{s}(\R^n)$. 
\end{lem}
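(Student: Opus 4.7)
The plan is to bound each of the three terms appearing in $B_{b,c}(v,w)$ (and correspondingly in $B^{\ast}_{b,c}$) separately on the dense subspace $C_c^{\infty}(\R^n)\times C_c^{\infty}(\R^n)\subset H^s(\R^n)\times H^s(\R^n)$ and then extend by continuity. The quadratic term is immediate from Cauchy--Schwarz in $L^2(\R^n)$, giving
\[
\left|((-\Delta)^{s/2}v,(-\Delta)^{s/2}w)_{\R^n}\right|\leq \|v\|_{H^s(\R^n)}\|w\|_{H^s(\R^n)},
\]
while the potential term is bounded by H\"older's inequality together with the embedding $H^s(\R^n)\hookrightarrow L^2(\R^n)$:
\[
|(cv,w)_{\Omega}|\leq \|c\|_{L^{\infty}(\Omega)}\|v\|_{L^2(\Omega)}\|w\|_{L^2(\Omega)}\leq \|c\|_{L^{\infty}(\Omega)}\|v\|_{H^s(\R^n)}\|w\|_{H^s(\R^n)}.
\]
Exactly the same bounds handle the corresponding pieces of $B^{\ast}_{b,c}$.

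For the drift term the key move is to reinterpret the integral as a duality pairing between $\nabla v\in H^{s-1}(\R^n)^n$ and the zero-extension of $bw$ viewed in $\widetilde H^{1-s}(\Omega)$. This gives
\[
\left|(b\cdot\nabla v, w)_{\Omega}\right|\leq \|\nabla v\|_{H^{s-1}(\R^n)}\,\|bw\|_{\widetilde H^{1-s}(\Omega)},
\]
and the first factor is dominated by $\|v\|_{H^s(\R^n)}$ directly from the Fourier-multiplier definition. The principal analytic content of the proof is therefore the product estimate
\[
\|bw\|_{\widetilde H^{1-s}(\Omega)}\leq C\,\|b\|_{W^{1-s,\infty}(\Omega)}\,\|w\|_{H^s(\R^n)}.
\]
Here the assumption $s>\tfrac12$ plays an essential role in two ways: it forces $1-s<\tfrac12$, so that by Remark \ref{rmk on Sobolev} any $H^{1-s}(\R^n)$-function supported in $\overline{\Omega}$ automatically lies in $\widetilde H^{1-s}(\Omega)$, allowing the zero-extension of $bw$ to be taken at no regularity cost; and it provides the strict embedding $H^s(\Omega)\hookrightarrow H^{1-s}(\Omega)$, which supplies the regularity gap needed to absorb the fact that $b$ has only the borderline H\"older-type regularity matching the target Sobolev exponent. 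The adjoint drift term $(bv^{\ast},\nabla w^{\ast})_{\Omega}$ in $B^{\ast}_{b,c}$ is treated identically, swapping the roles of the two factors in the duality pairing.

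The main obstacle is the product estimate itself. A direct computation of the Gagliardo seminorm $[bw]_{H^{1-s}}$ leaves, after separating $b(x)w(x)-b(y)w(y)$ in the standard way, an integrand of the form $|w(y)|^2/|x-y|^n$, which is logarithmically divergent at the diagonal, so one cannot just interpolate naively. The natural remedy is a commutator/paraproduct decomposition: one writes
\[
(-\Delta)^{(1-s)/2}(bw)=b\,(-\Delta)^{(1-s)/2}w+[(-\Delta)^{(1-s)/2},b]w,
\]
bounds the first summand by $\|b\|_{L^{\infty}(\Omega)}\|w\|_{\dot H^{1-s}(\R^n)}$, and then uses the explicit singular integral representation of the commutator combined with the higher regularity $w\in H^s(\R^n)$, $s>1-s$, to absorb the borderline contribution. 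Once this product estimate is available, combining it with the bounds for the other two terms yields the continuity of $B_{b,c}$ and $B^{\ast}_{b,c}$ on $H^s(\R^n)\times H^s(\R^n)$ by density.
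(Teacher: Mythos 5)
Your proposal follows the paper's proof closely: you bound the leading-order and potential terms in the identical way, and reduce the drift term to the same duality pairing
\[
|(b\cdot\nabla v, w)_{\Omega}|\leq \|\nabla v\|_{H^{s-1}(\R^n)}\,\|bw\|_{\widetilde H^{1-s}(\Omega)},
\]
using the same two consequences of $s>\tfrac12$ (the identification $\widetilde H^{1-s}(\Omega)=H^{1-s}(\Omega)$ and the embedding $H^s\hookrightarrow H^{1-s}$). The only substantive divergence is in how the product estimate $\|bw\|_{H^{1-s}}\lesssim \|b\|_{W^{1-s,\infty}}\|w\|_{H^s}$ is supplied. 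The paper first extends $b$ to a global $B\in W^{1-s,\infty}(\R^n)$ with comparable norm, and then invokes the Kato--Ponce fractional Leibniz rule of \cite{GO14} as a black box:
\[
\|J^{1-s}(Bw)\|_{L^2}\lesssim \|B\|_{L^\infty}\|J^{1-s}w\|_{L^2}+\|J^{1-s}B\|_{L^\infty}\|w\|_{L^2}.
\]
You instead propose to re-derive this by a commutator/paraproduct decomposition. That is a valid route --- indeed it is essentially the mechanism underlying Kato--Ponce --- and you correctly flag that a naive Gagliardo seminorm computation fails at the borderline exponent. Two small remarks: first, you omit the preliminary extension of $b$ to a global multiplier (needed so that $(-\Delta)^{(1-s)/2}$ and the commutator make sense on $\R^n$); second, your appeal to the ``higher regularity $w\in H^s$, $s>1-s$'' to control the commutator is unnecessary --- the Calder\'on-type bound $\|[(-\Delta)^{(1-s)/2},b]w\|_{L^2}\lesssim \|b\|_{W^{1-s,\infty}}\|w\|_{L^2}$ holds already on $L^2$, which is precisely what the cited Kato--Ponce inequality encodes. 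Net effect: same argument, with the paper outsourcing the single nontrivial multiplier estimate to the literature while you sketch (but do not fully carry out) a direct proof of it.
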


\begin{proof}
It suffices to discuss the extension of $B_{b,c}(\cdot,\cdot)$ as the argument for $B_{b,c}^{\ast}(\cdot,\cdot)$ is analogous. First, we directly have
\begin{align}\label{bounded estimate 1}
&\notag \quad \left|((-\Delta)^{s/2}v,(-\Delta)^{s/2}w)_{\R^n}\right|+\left|(cv,w)_\Omega\right| \\
&\notag \leq C(1+\|c\|_{L^{\infty}(\Omega)}) \left(\|v\|_{\dot{H}^{s}(\R^n)}\|w\|_{\dot{H}^{s}(\R^n)} + \|v\|_{L^2(\Omega)}\|w\|_{L^2(\Omega)}\right)\\
&\leq C(1+\|c\|_{L^{\infty}(\Omega)}) \|v\|_{H^{s}(\R^n)}\|w\|_{H^{s}(\R^n)},
\end{align}
for some constant $C>0$ independent of $v,w$. It remains to discuss the contribution of the drift term.
For this we note that as $1-s \in (0,\frac{1}{2})$ and as $\Omega$ is a Lipschitz domain, we have $\widetilde{H}^{1-s}(\Omega) = H^{1-s}(\Omega)$ and $\|w\|_{\widetilde{H}^{1-s}(\Omega)} \leq C \|w\|_{H^{1-s}(\R^n)}$.
Next, we choose $B \in W^{1-s,\infty}(\R^n)$ such that
		\begin{align*}
		B|_{\Omega} = b, \ \|B\|_{W^{1-s,\infty}(\R^n)} \leq 2\|b\|_{W^{1-s,\infty}(\Omega)}.
		\end{align*} 
		and estimate
		\begin{align}\label{estimate of drift term_a}
		\begin{split} 
		\left|\int_\Omega w (b\cdot \nabla v)   dx\right|
		& \leq  \|b w\|_{\widetilde{H}^{1-s}(\Omega)} \|  \nabla v\|_{H^{s-1}(\Omega)} \\
		& \leq C \| B w \|_{H^{1-s}(\R^n)} \|  \nabla v\|_{H^{s-1}(\R^n)}\\
		& \leq C \|w \|_{H^{1-s}(\R^n)} \| b \|_{W^{1-s,\infty}(\Omega)} \| \nabla v\|_{H^{s-1}(\R^n)} \\
		& \leq C \|w \|_{H^{1-s}(\R^n)} \| b \|_{W^{1-s,\infty}(\Omega)} \| v\|_{H^{s}(\R^n)}\\
		& \leq C \|w \|_{H^{s}(\R^n)} \| b \|_{W^{1-s,\infty}(\Omega)} \| v\|_{H^{s}(\R^n)}.
		\end{split}
		\end{align}
Here we used the Kato-Ponce inequality \cite{GO14} in order to obtain a suitable multiplier estimate
		\begin{align}\label{multiplier estimate}
		\begin{split}
		\|B w\|_{H^{1-s}(\R^n)} 
		&\leq C \|J^{1-s}(B w)\|_{L^2(\R^n)} \\
		&\leq C \left(\|B\|_{L^{\infty}(\R^n)} \|J^{1-s} w \|_{L^2(\R^n)} + \|J^{1-s}B \|_{L^{\infty}(\R^n)} \|w \|_{L^2(\R^n)}\right)\\
		& \leq C \|B\|_{W^{1-s,\infty}(\R^n)} \|w\|_{H^{1-s}(\R^n)}\\
		& \leq 2C \|b\|_{W^{1-s,\infty}(\Omega)} \|w\|_{H^{1-s}(\R^n)}\\
		& \leq 2C \|b\|_{W^{1-s,\infty}(\Omega)} \|w\|_{H^{s}(\R^n)},
		\end{split}
		\end{align}
		where $J^{1-s}:=(\Delta -1)^{\frac{1-s}{2}}$ and $0<1-s< \frac{1}{2}$. Finally, combining \eqref{bounded estimate 1}, \eqref{estimate of drift term_a} and \eqref{multiplier estimate}, one has the desired result 
		\begin{align}\label{boundedness of bilinear form in H^s}
			\left|B_{b,c}(v,w)\right|\leq C\|v\|_{H^s(\R^n)}\|w\|_{H^s(\R^n)}, 
		\end{align}
		for $\frac{1}{2}<s<1$ and for some constant $C>0$ independent of $v,w$. A similar argument also yields the boundedness for the adjoint bilinear form, which reads
		\begin{align}
		\label{boundedness of adjoint bilinear form in H^s}
		\left|B_{b,c}^\ast(v^\ast,w^\ast)\right|\leq C\|v^\ast\|_{H^s(\R^n)}\|w^\ast\|_{H^s(\R^n)}. 
		\end{align}
\end{proof}
	
\begin{rmk} For the above well-definedness argument, we point out the following observations:
\label{rmk:reg_1}
\begin{itemize}
\item[(a)] The requirement that $\Omega$ is a bounded Lipschitz domain only entered in the identity $\widetilde{H}^{1-s}(\Omega) = H^{1-s}(\Omega)$. If this identity holds for a less regular domain $\Omega$ (e.g. if $\partial \Omega$ is Hölder continuous with sufficiently high Hölder exponent depending on $s$) the well-definedness of the bilinear forms \eqref{bilinear form}, \eqref{adjoint bilinear form} persists for this domain. In order to avoid technicalities, we do not address this issue in the sequel, but will always assume that $\Omega$ is Lipschitz. Nevertheless, we phrase our results (e.g. the Dirichlet-to-Neumann map in Definition \ref{defi:DtN}) such that they remain valid for a more general class of domains which satisfy the condition $\widetilde{H}^{1-s}(\Omega) = H^{1-s}(\Omega)$.

\item[(b)] As an alternative condition to imposing Lipschitz regularity, we might also have asked for $b\in W^{1-s,\infty}_0(\Omega)^n$. As the drift field $b$ however is one of the main objects of interest in our argument, we opted for rather imposing regularity on $\Omega$ than on restricting the class of admissible drift fields. 
\end{itemize}
\end{rmk}	

	We in particular notice that by the above definitions, \eqref{bilinear form}, \eqref{adjoint bilinear form} and the estimates \eqref{boundedness of bilinear form in H^s} and \eqref{boundedness of adjoint bilinear form in H^s}, we obtain 
	\begin{align}
	\label{eq:non_symm}
	B_{b,c}(u,w) = B_{b,c}^{\ast}(w,u), \text{ for any }u,w\in H^s(\R^n).
	\end{align}
	
	With these bilinear forms at hand, we define $u\in H^s(\R^n)$ with $u-f\in \widetilde{H}^s(\Omega)$ to be a solution of \eqref{Dirichlet problem} if for all $w\in \widetilde{H}^{s}(\R^n)$ we have $B_{b,c}(u,w)=F(w)$. 
	A solution to the adjoint problem is defined analogously using $B^{\ast}_{b,c}(\cdot,\cdot)$.
	
	Relying on energy estimates, we can prove the existence and uniqueness of solutions to \eqref{Dirichlet problem} and \eqref{adjoint Dirichlet problem}, outside of a discrete set of eigenvalues:
	
	\begin{prop}
		\label{prop:well_posed}
		Let $\Omega \subset \R^n$, $n\geq 1$, be a bounded Lipschitz domain and $\frac{1}{2}<s<1$. Assume that $b\in W^{1-s,\infty}(\Omega)^n$, $c\in L^{\infty}(\Omega)$. Let $B_{b,c}(\cdot, \cdot)$ and $B^{\ast}_{b,c}(\cdot,\cdot)$ be the bilinear and the adjoint bilinear form defined by \eqref{bilinear form} and \eqref{adjoint bilinear form}, respectively. Then the following properties hold: 
		\begin{itemize}
			\item[(a)] There exists a countable set $\Sigma = \{ \lambda_j \}_{j=1}^{\infty} \subset \R$ with $\lambda_1 \leq \lambda_2 \leq \cdots \rightarrow \infty$ such that if $\lambda \in \R \setminus \Sigma$, for any $f\in H^{s}(\mathbb{R}^n)$ and $F\in (\widetilde{H}^s(\Omega))^{\ast}$ there exists a unique function $u\in H^{s}(\R^n)$ such that
			\begin{align*}
			B_{b,c}(u,w) - \lambda(u,w)_{\Omega} = F(w) \mbox{ for all } w \in \widetilde{H}^s(\Omega) \mbox{ and } \ u-f \in \widetilde{H}^{s}(\Omega).
			\end{align*} 
			We then have
			\begin{align*}
			\|u\|_{H^s(\R^n)} \leq C(\|f\|_{H^{s}(\R^n)} + \|F\|_{(H^{s}(\R^n))^{\ast}}),
			\end{align*}
			where the constant $C>0$ depends on $n,s, \|b\|_{W^{1-s,\infty}(\Omega)}, \|c\|_{L^{\infty}(\Omega)}, \lambda$.
			Moreover, $\Sigma \subset (-\|c\|_{L^{\infty}(\Omega)}-C \|b\|_{W^{1-s,\infty}(\Omega)}, \infty)$ where the constant $C>0$ is sufficiently large and only depends on $s,n$.
			\item[(b)] Similarly, for the adjoint bilinear form \eqref{adjoint bilinear form}, there exists a countable set $\Sigma ^{\ast}\subset \R$ such that if $\lambda^{\ast}\in \R \setminus \Sigma ^{\ast}$, for all $f^* \in H^s(\mathbb{R}^n)$ there is a unique $u^* \in H^s(\mathbb{R}^n)$ with
			\begin{align*}
			B_{b,c}^{\ast}(u^{\ast},w^{\ast}) - \lambda^{\ast}(u^{\ast},w^{\ast})_{\Omega} = F^{\ast}(w^{\ast}), \text{ for all }w^{\ast}\in \widetilde{H}^s(\Omega) \text{ and } u^{\ast}-f^{\ast} \in \widetilde{H}^{s}(\Omega).
			\end{align*} 
		\end{itemize}
	\end{prop}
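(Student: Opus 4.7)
The plan is to combine the Lax--Milgram theorem with the Fredholm alternative, in the style of classical elliptic theory. First, I would homogenize by writing $u = v+f$ with $v \in \widetilde{H}^s(\Omega)$, reducing the problem to finding $v \in \widetilde{H}^s(\Omega)$ with $B_{b,c}(v,w) - \lambda(v,w)_{\Omega} = \widetilde{F}(w)$ for all $w\in \widetilde{H}^s(\Omega)$, where $\widetilde{F} := F - B_{b,c}(f,\cdot) + \lambda(f,\cdot)_{\Omega}$ lies in $(\widetilde{H}^s(\Omega))^{\ast}$ by Lemma \ref{lem:bilin_form}, with norm controlled by $\|F\|_{\ast} + \|f\|_{H^s(\R^n)}$.

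The technical core of the argument is a coercivity estimate for the shifted form on $\widetilde{H}^s(\Omega)$. Lemma \ref{lem:frac_Poinc} combined with the norm equivalence $\|v\|_{H^s(\R^n)}^2 \sim \|v\|_{L^2(\R^n)}^2 + \|(-\D)^{s/2}v\|_{L^2(\R^n)}^2$ implies $\|(-\D)^{s/2}v\|_{L^2(\R^n)}^2 \geq c_0 \|v\|_{H^s(\R^n)}^2$ for all $v\in \widetilde{H}^s(\Omega)$, and the potential term is trivially bounded by $\|c\|_{L^\infty(\Omega)}\|v\|_{L^2(\Omega)}^2$. The hard part will be absorbing the drift term: from the argument in the proof of Lemma \ref{lem:bilin_form}, we have
\[
|(b\cdot\nabla v, v)_{\Omega}| \leq C\|b\|_{W^{1-s,\infty}(\Omega)}\|v\|_{H^{1-s}(\R^n)}\|v\|_{H^s(\R^n)}.
\]
Since $s>1/2$ forces $1-s < s$, the interpolation $\|v\|_{H^{1-s}}\leq C\|v\|_{L^2}^{(2s-1)/s}\|v\|_{H^s}^{(1-s)/s}$ followed by Young's inequality with exponents $2s$ and $2s/(2s-1)$ yields, for any $\epsilon>0$,
\[
|(b\cdot\nabla v,v)_{\Omega}|\leq \epsilon\|v\|_{H^s(\R^n)}^2 + C_{\epsilon,b}\|v\|_{L^2(\Omega)}^2,
\]
with $C_{\epsilon,b}$ depending on $s,n,\epsilon$ and $\|b\|_{W^{1-s,\infty}(\Omega)}$. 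Picking $\epsilon=c_0/2$ and then $\lambda<-\|c\|_{L^\infty(\Omega)}-C_{\epsilon,b} =: -\lambda_{\ast}$ makes the shifted form coercive, so Lax--Milgram provides a unique $v$ and the desired quantitative estimate for every such $\lambda$.

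To obtain the discrete spectrum $\Sigma$, I fix any $\lambda_0<-\lambda_{\ast}$ so that $T_{\lambda_0}v:=B_{b,c}(v,\cdot)-\lambda_0(v,\cdot)_{\Omega}$ is an isomorphism $\widetilde{H}^s(\Omega)\to(\widetilde{H}^s(\Omega))^{\ast}$. Setting $K:=T_{\lambda_0}^{-1}\circ J$, where $J:\widetilde{H}^s(\Omega)\hookrightarrow L^2(\Omega)\hookrightarrow (\widetilde{H}^s(\Omega))^{\ast}$ is the natural embedding, the original equation becomes $(I-(\lambda-\lambda_0)K)v = T_{\lambda_0}^{-1}\widetilde{F}$. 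The Rellich--Kondrachov compactness of $\widetilde{H}^s(\Omega)\hookrightarrow L^2(\Omega)$ on bounded Lipschitz $\Omega$ makes $K$ compact, so the Riesz--Schauder theorem yields a (at most countable) sequence of nonzero eigenvalues $\mu_j$ accumulating only at $0$, and one defines $\Sigma:=\{\lambda_0+\mu_j^{-1}\}$. The coercivity established above rules out $\Sigma\cap(-\infty,-\lambda_{\ast}]$, which forces $\mu_j>0$ and hence $\lambda_j\to+\infty$, recovering the ordering in the statement; the norm bound on the solution for $\lambda\notin\Sigma$ then follows by the open mapping theorem applied to $(I-(\lambda-\lambda_0)K)^{-1}$. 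For part (b), the same argument applies verbatim with $B_{b,c}$ replaced by $B_{b,c}^{\ast}$, since Lemma \ref{lem:bilin_form} provides the structurally symmetric bound $|(bv,\nabla w)_{\Omega}|\leq C\|b\|_{W^{1-s,\infty}(\Omega)}\|v\|_{H^{s}(\R^n)}\|w\|_{H^{1-s}(\R^n)}$ needed for the same interpolation-plus-Young coercivity argument.
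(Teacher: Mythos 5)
Your proof follows essentially the same route as the paper's: homogenize to $v=u-f\in\widetilde{H}^s(\Omega)$, establish coercivity of the shifted form via the interpolation estimate $\|\phi\|_{H^{1-s}}\lesssim\|\phi\|_{L^2}^{(2s-1)/s}\|\phi\|_{H^s}^{(1-s)/s}$ combined with Young's inequality and the fractional Poincar\'e inequality, then reduce to a compact-operator equation via the Rellich-type embedding and invoke the spectral theorem for compact operators. The only (cosmetic) divergence is that you correctly invoke Lax--Milgram for the non-symmetric coercive form where the paper somewhat loosely speaks of the Riesz representation theorem and a ``scalar product''; the substance is identical.
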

	
	\begin{rmk}
		\label{rmk:reg_b}
		We point out that the regularity of $b \in W^{1-s,\infty}(\Omega)^n$ is determined by the properties of multipliers in the function space $H^{1-s}(\Omega)$ for $s\in (\frac{1}{2},1)$ (through the application of the Kato-Ponce inequality, c.f. the proof of the well-posedness result below and \cite{GO14}). This is consistent with the fact that for $s=1$ it is easily seen that $b\in L^{\infty}(\Omega)$ is a sufficient condition for the well-posedness of the problem from Proposition \ref{prop:well_posed}.
	\end{rmk}

	\begin{rmk}
		\label{rmk:well_posed}
		For $\Omega$ Lipschitz, we recall that $\widetilde{H}^s(\Omega) = H^{s}_{\overline{\Omega}}$. In particular the above well-posedness theory then also transfers to these functions spaces immediately.
	\end{rmk}

\begin{rmk}
\label{rmk:reg_fct_spaces}
We emphasize that we did not attempt to optimize the function spaces for the drift and the potential contributions here. It is an interesting question which we plan to investigate in future work under which regularity conditions the properties of the inverse problem under investigation persist.
\end{rmk}	
	
	\begin{proof}[Proof of Proposition \ref{prop:well_posed}]
		We first give the proof of (a); it will follow from the spectral theorem for compact operators.
		Indeed, we first assume that $f=0$, which can always be achieved by considering $v=u-f\in \widetilde{H}^s(\Omega)$. Arguing similarly as in \eqref{multiplier estimate} for the resulting drift term, we note that this yields an appropriately modified functional $\widetilde F \in (\widetilde{H}^{s}(\Omega))^{\ast}$. In the sequel, we only consider the function $v$. Since the boundedness of the bilinear form was already proven by Lemma \ref{lem:bilin_form}, we only need to prove the coercivity of $B_{b,c}(\cdot,\cdot)$.
		
 To this end, we first note that for $ \phi \in C_c^{\infty}(\R^n)$ the following  interpolation estimate holds 
		\begin{align}\label{interpolation estimate}
		\| \phi\|_{H^{1-s}(\R^n)} \leq C \|\phi \|_{L^2(\R^n)}^{\frac{2s-1}{s}} \|\phi \|_{H^{s}(\R^n)}^{\frac{1-s}{s}} \mbox{ for } \frac{1}{2}<s<1.
		\end{align}
	    By virtue of Remark \ref{rmk on Sobolev} and the interpolation estimate \eqref{interpolation estimate}, one has for $\phi, v \in C_c^{\infty}(\R^n)$, 
		\begin{align}\label{estimate of drift term}
		\begin{split} 
		\left|\int_\Omega \phi (\nabla v\cdot  b)   dx\right|
		& \leq  \|b \phi\|_{\widetilde{H}^{1-s}(\Omega)} \|  \nabla v\|_{H^{s-1}(\Omega)} \\
		& \leq C \| b \phi \|_{H^{1-s}(\R^n)} \|  \nabla v\|_{H^{s-1}(\R^n)}\\
		& \leq C \|\phi \|_{H^{1-s}(\R^n)} \| b \|_{W^{1-s,\infty}(\Omega)} \| \nabla v\|_{H^{s-1}(\R^n)} \\
		& \leq C \|\phi \|_{H^{1-s}(\R^n)} \| b \|_{W^{1-s,\infty}(\Omega)} \| v\|_{H^{s}(\R^n)}\\
		& \leq C \|v \|_{H^{s}(\R^n)} \| b \|_{W^{1-s,\infty}(\Omega)} \|\phi\|_{L^2(\R^n)}^{\frac{2s-1}{s}} \| \phi\|_{H^{s}(\R^n)}^{\frac{1-s}{s}}\\
		& \leq C_{\epsilon} \|b\|_{W^{1-s,\infty}(\Omega)} \|v \|_{H^{s}(\R^n)} \|\phi\|_{L^{2}(\R^n)} + \epsilon  \|v \|_{H^{s}(\R^n)} \|\phi\|_{H^{s}(\R^n)},
		\end{split}
		\end{align}	
which holds for any $\epsilon>0$ and for (generic) constants $C,C_\epsilon >0$ which are independent of $v$ and $\phi$.	
Here we have utilized Young's inequality in the last line; and used the Kato-Ponce inequality as in \eqref{estimate of drift term_a}. Notice that by the density of $C_c^{\infty}(\Omega)$ in $\widetilde{H}^s(\Omega)$, the estimate \eqref{estimate of drift term} also extends to $v, \phi \in \widetilde{H}^s(\Omega)$.
		A matching coercivity estimate follows from the Poincar\'e inequality (c.f. Lemma \ref{lem:frac_Poinc})
		\begin{align*}
		\|v\|_{L^2(\R^n)}  \leq C \|(-\D)^{s/2} v\|_{L^2(\R^n)}, \ v \in \widetilde{H}^s(\Omega),
		\end{align*}
		in combination with \eqref{estimate of drift term} and Young's inequality:
		
		\begin{align*}
		B_{b,c}(v,v) &\geq \|(-\D)^{s/2} v\|_{L^2(\R^n)}^2 - \|c\|_{L^{\infty}(\R^n)}\|v\|_{L^2(\R^n)}^2  - \left| \int\limits_{\Omega} v (\nabla v \cdot b) dx \right|\\
		& \geq \|(-\D)^{s/2} v\|_{L^2(\R^n)}^2 - \|c\|_{L^{\infty}(\R^n)}\|v\|_{L^2(\R^n)}^2  \\
		&\quad - C_{\epsilon} \|b\|_{W^{1-s,\infty}(\Omega)} \|v\|_{L^2(\R^n)}^2 - \epsilon \|v\|_{H^s(\R^n)}^2 \\
		& \geq c_0(\|(-\D)^{s/2} v\|_{L^2(\R^n)}^2  + \|v\|_{L^2(\R^n)}^2) - \|c\|_{L^{\infty}(\R^n)}\|v\|_{L^2(\R^n)}^2  \\
		& \quad - C_{\epsilon} \|b\|_{W^{1-s,\infty}(\Omega)} \|v\|_{L^2(\R^n)}^2 - \epsilon \|v\|_{H^s(\R^n)}^2 \\
		& \geq \frac{c_0}{2}\|v\|_{H^{s}(\R^n)}^2  - \|c\|_{L^{\infty}(\R^n)}\|v\|_{L^2(\R^n)}^2  - C_{c_0} \|b\|_{W^{1-s,\infty}(\Omega)} \|v\|_{L^2(\R^n)}^2,
		\end{align*}
		for some constant $c_0>0$ and $C>0$ independent of $v$.
		Here we have chosen $\epsilon = \frac{c_0}{2}$ in the above calculation.
		
		As a consequence, for $\mu = C_{c_0}\|b\|_{W^{1-s,\infty}(\Omega)} + \|c\|_{L^{\infty}(\Omega)} \geq 0$ we have
		\begin{align*}
		B_{b,c}(v,v) + \mu (v,v)_{L^2(\R^n)} \geq \frac{c_0}{2} \|v\|_{H^s(\R^n)}^2, \text{ for any }v\in \widetilde{H}^s(\Omega).
		\end{align*}
		Thus, for $\mu$ as above, $B_{b,c}(\cdot, \cdot) + \mu (\cdot,\cdot)_{L^2(\Omega)}$ is a scalar product on $\widetilde{H}^s(\Omega)$. Applying the Riesz representation theorem, we then infer that for all $\widetilde F\in (\widetilde{H}^s(\Omega))^{\ast}$ there exists $v\in \widetilde{H}^s(\Omega)$ such that
		\begin{align*}
		B_{b,c}(v,\phi) + \mu (v,\phi)_{L^2(\Omega)} & =\widetilde  F(\phi) \mbox{ for all } \phi \in \widetilde{H}^s(\Omega).
		\end{align*}
		In fact, we may write $v = G_{\mu}(\widetilde F) $, where $G_{\mu}$ is a bounded, linear operator from $(\widetilde{H}^s(\Omega))^{\ast} \rightarrow \widetilde{H}^s(\Omega)$. By the compact Sobolev embedding of $\widetilde{H}^s(\Omega) \hookrightarrow L^2(\Omega)$ the operator $G_{\mu}$ is a compact operator, if interpreted as a map from $L^2(\Omega)$ to $L^2(\Omega)$. 
		
		Since the equation
		\begin{align}
		\label{eq:eq_to_solve}
		B_{b,c}(v,\cdot) - \lambda(v,\cdot) = \widetilde F(\cdot) \mbox{ on } \widetilde{H}^s(\Omega)
		\end{align}
		is equivalent to the equation $v= G_{\mu}((\mu+\lambda)v + \widetilde F)$, we may now invoke the spectral theorem for compact operators to infer that there exists a countable, decreasing sequence $\{\gamma_j\}_{j=1}^{\infty} \subset [0,\infty)$ with $\gamma_j \rightarrow 0$ such that if $\gamma \notin \{\gamma_j\}_{j=1}^{\infty}$ the operator $G-\gamma Id$ is invertible. In particular, we may rewrite $\gamma_j = (\lambda_j+\mu)^{-1}$ for a countable, increasing sequence with $\lambda_j \rightarrow \infty$. Recalling that \eqref{eq:eq_to_solve} can be written as a corresponding operator equation, then concludes the proof of the solvability result for $B_{b,c}(\cdot,\cdot)$. 
		
		For (b), the proof is similar to (a). As in the previous arguments, one only needs to check the boundedness of $\left| \int_{\Omega} bv^{\ast}\cdot \nabla \phi^{\ast}dx \right|$, where $v^{\ast}=u^{\ast}-f^{\ast}\in \widetilde H^s(\Omega)$ and $\phi^{\ast}\in \widetilde H^s(\Omega)$ is an arbitrary test function. Here $u^{\ast}$ and $f^{\ast}$ are the functions which appeared in the equation \eqref{adjoint Dirichlet problem}. In particular, by using the Kato-Ponce inequality, the interpolation inequality \eqref{interpolation estimate} and Young's inequality again, for any $\epsilon>0$, we have 
		\begin{align*}
		\left|\int_\Omega bv^{\ast}\cdot \nabla \phi^{\ast} dx\right|\leq C_{\epsilon} \|b\|_{W^{1-s,\infty}(\Omega)} \|\phi^{\ast}\|_{H^{s}(\R^n)} \|v^{\ast}\|_{L^{2}(\R^n)} + \epsilon  \|\phi^{\ast}\|_{H^{s}(\R^n)} \|v^{\ast}\|_{H^{s}(\R^n)},
		\end{align*}
		for some positive constants $C_\epsilon$ independent of $v^{\ast}$ and $\phi^{\ast}$. The remainder of the proof follows along the same lines as the proof of (a). This completes the proof of the proposition.
\end{proof}

	\begin{rmk} 
\label{rmk:conditions_s}	
We point out that:
		\begin{itemize}
			\item[(a)] Relying on our assumption \eqref{eigenvalue condition}, and combining this with the results of Proposition \ref{prop:well_posed} and the Fredholm alternative, then implies the well-posedness of our problem \eqref{eq:main}. In particular, the Fredholm alternative also yields that \eqref{eigenvalue condition} is equivalent to the following condition (for example, see  \cite[Theorem 2.27]{mclean2000strongly} or \cite[Chapter 5.3]{gilbarg2015elliptic})
\begin{align*}
&w^{\ast}\in H^s(\R^n) \text{ is a solution of }(-\Delta)^s w^{\ast}-\nabla \cdot  (b w^{\ast} ) +cw^{\ast}=0\text{ in }\Omega \mbox{ with } w^{\ast} = 0 \mbox{ in } \Omega_e, \\
& \text{then we have }w^{\ast} \equiv 0.
\end{align*}

			\item[(b)] The proofs of Lemma \ref{lem:bilin_form} and Proposition \ref{prop:well_posed} relied on the fact that the fractional Laplacian was the leading order operator of our equations which was ensured by the condition $\frac{1}{2}<s<1$. In particular, the above arguments and results do not persist in the regime $0<s\leq \frac{1}{2}$.
		\end{itemize}
	\end{rmk}

	Now, let $\Omega\subset \R^n$ be a bounded open set and $\frac{1}{2}<s<1$. We consider the Dirichlet problem \eqref{Dirichlet problem} with a zero source function, i.e., 
	\begin{align}\label{zero Dirichlet problem}
	\begin{split}
	((-\Delta)^s+b\cdot \nabla +c)u=0 & \text{ in }\Omega,\\
	u-f & \in \widetilde{H}^s(\Omega),
	\end{split}
	\end{align}
	for some $f\in H^s(\Omega_e)$.
	Recall that the eigenvalue condition \eqref{eigenvalue condition} in combination with Proposition \ref{prop:well_posed} shows that there exists a unique solution $u\in H^s(\R^n)$ of \eqref{zero Dirichlet problem}. We would like to point out that the solution $u$ here depends only on $f$ modulo $\widetilde{H}^s(\Omega)$. In effect, we emphasize that the solution of the Dirichlet problem \eqref{zero Dirichlet problem} is only affected by the exterior datum. 
	
	Therefore, following \cite{ghosh2016calder} and \cite{ghosh2017calderon}, we introduce the quotient space 
$$
\mathbb{X} = H^s(\mathbb{R}^n) / \widetilde{H}^s(\Omega).
$$	
	We will denote the equivalence class of $f \in H^s(\mathbb{R}^n)$ by $[f]$. We also recall that for $\Omega$ a bounded, open Lipschitz set, we have $\mathbb{X}=H^s(\Omega_e)$ by Remark \ref{rmk on Sobolev}. Based on the well-posedness of \eqref{eq:main}, one can define the DN map rigorously as follows.
	
	\begin{defi}[DN map]
		\label{defi:DtN}
		Let $\Omega \subset \R^n$, $n\geq 1$, be a bounded Lipschitz domain, let $s\in (\frac{1}{2},1)$ and let $b\in W^{1-s,\infty}(\Omega)^n$, 
$c\in L^{\infty}(\Omega)$. Let $B_{b,c}(\cdot,\cdot)$ be given as \eqref{bilinear form}. Then we define the Dirichlet-to-Neumann operator associated with the equation  \eqref{eq:main} as 
		\begin{align*}
		\Lambda_{b,c}: \mathbb{X} \rightarrow \mathbb{X}^{\ast},\ 
		(\Lambda_{b,c}[f], [g]) = B_{b,c}(u_f, g),
		\end{align*}
		where $f, g\in H^{s}(\mathbb{R}^n)$ and $u_f$ is the weak solution to \eqref{eq:main} with exterior datum $f$.
	\end{defi}
	
	We first remark that this is well-defined, since by the definition of a solution to \eqref{eq:main} we have $B_{b,c}(u_f,\widetilde{g})=B_{b,c}(u_f, \widetilde{g}+ \psi)$ for any $\psi \in \widetilde{H}^s(\Omega)$. Also $B_{b, c}(u_{f + \varphi}, g) = B_{b, c}(u_f, g)$ for $\varphi \in \widetilde{H}^s(\Omega)$, since $u_{f + \varphi} = u_f$ (by the uniqueness of solutions). Finally, the boundedness of the DN map $\Lambda_{b, c}: \mathbb{X} \to \mathbb{X}^*$  follows easily, once we have the multiplier estimate \eqref{multiplier estimate} and part (a) of Proposition \ref{prop:well_posed}.

\begin{rmk}
\label{rmk:DtN_char}	
	Furthermore, we remark that a formal calculation using Definition \ref{defi:DtN} yields
	\begin{align}
	(\Lambda_{b,c}[f],[g]) & =B_{b,c}(u_{f},g)\nonumber \\
	& =\int_{\mathbb{R}^{n}}(-\Delta)^{s/2}u_f(-\Delta)^{s/2}gdx+\int_\Omega b\cdot\nabla u_f gdx+\int_\Omega cu_f gdx\nonumber \\
	& =\int_{\Omega_{e}}g (-\Delta)^s u_f dx,\label{t15}
	\end{align}
	where $u_f$ is the weak solution to \eqref{eq:main} with exterior datum $f$.
	Then from \eqref{t15}, one can formally obtain that 
	\begin{equation*}
	\Lambda_{b,c}[f]=\left.(-\Delta)^s u_{f}\right|_{\Omega_{e}},
	\end{equation*}
	which gives evidence of \eqref{DN map}.
	We remark that in order to make this calculation rigorous, higher regularity assumptions have to be imposed. We refer to Section 3, Lemma 3.1, in \cite{ghosh2016calder} for details on this.
\end{rmk}

	We may also consider the adjoint problem with a zero source term:
	\begin{align}
	\label{eq:adjoint}
	\begin{split}
	(-\D)^s u^{\ast} - \nabla \cdot (b u^{\ast}) + c u^{\ast}  = 0 &\mbox{ in } \Omega,\\
	u^{\ast} - f &\in \widetilde{H}^s(\Omega),
	\end{split}
	\end{align}
	Then by using the eigenvalue condition \eqref{eigenvalue condition} together with Proposition \ref{prop:well_posed}, this problem has a unique solution $u^*$. With slight abuse of notation, one can analogously define the DN map $\Lambda_{b,c}^{\ast}$ of the adjoint bilinear form \eqref{adjoint bilinear form}, i.e.,
			\begin{align*}
		\Lambda_{b,c}^{\ast}: \mathbb{X} \rightarrow \mathbb{X}^{\ast},\ 
		(\Lambda_{b,c}^{\ast}[f], [g]) = B_{b,c}^{\ast}(u_f^\ast, g).
		\end{align*}
		Here $u_f^* \in H^s(\mathbb{R})^n$ is a weak solution of \eqref{eq:adjoint} with exterior data $f$.

Next, let $\Lambda'_{b,c}: \mathbb{X} \to \mathbb{X}^*$ be the adjoint operator with respect to the DN map $\Lambda_{b,c}$, i.e., the adjoint DN map $\Lambda'_{b,c}$ is defined via
$$
(\Lambda_{b,c}f,g) = (f, \Lambda'_{b,c}g), \text{ for }f,g\in H^s(\mathbb{R}^n).
$$
Then we further observe the following relation between the DN map associated with \eqref{eq:main} and the DN map of the adjoint bilinear form, which states that the adjoint DN map $\Lambda'_{b,c}$ and the DN map $\Lambda_{b,c}^\ast$ with respect to the adjoint equation coincide. In order to simplify notation, here and in the sequel, we drop the brackets $[\cdot]$ for the elements of $\mathbb{X}$.
	
	\begin{lem}
		\label{lem:symmetry}
		Let $\Omega \subset \R^n$, $n\geq 1$, be a bounded open Lipschitz set, $s\in (\frac{1}{2},1)$, $b\in W^{1-s,\infty}(\Omega)^n$,  
$c\in L^{\infty}(\Omega)$ and assume that \eqref{eigenvalue condition} holds. Let $B_{b,c}(\cdot,\cdot)$, $B_{b,c}^{\ast}(\cdot,\cdot)$ and $\Lambda_{b,c}, \Lambda_{b,c}^{\ast}$ be given as above. Then, for all $f,g \in \mathbb{X}$
		\begin{align}\label{eq: DN map and its adjoint}
		(\Lambda_{b,c}f,g) = (f, \Lambda_{b,c}^{\ast}g).
		\end{align}
	\end{lem}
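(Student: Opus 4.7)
The plan is to unfold both sides in terms of the bilinear forms $B_{b,c}$ and $B_{b,c}^{\ast}$ and to use the crucial identity \eqref{eq:non_symm}, i.e.\ $B_{b,c}(u,w)=B_{b,c}^{\ast}(w,u)$, together with the fact that both $u_f$ and $u_g^{\ast}$ are weak solutions of their respective homogeneous exterior value problems.

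More concretely, I would let $u_f \in H^s(\R^n)$ denote the unique solution of the direct problem \eqref{zero Dirichlet problem} with exterior datum $f$, and $u_g^{\ast} \in H^s(\R^n)$ the unique solution of the adjoint problem \eqref{eq:adjoint} with exterior datum $g$ (well-posedness being supplied by \eqref{eigenvalue condition} together with Proposition \ref{prop:well_posed} and Remark \ref{rmk:conditions_s}(a)). By definition of the quotient space $\mathbb{X}$ one has $u_f - f,\, u_g^{\ast}-g \in \widetilde{H}^s(\Omega)$, which is the main structural input.

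Next I would rewrite the left-hand side of \eqref{eq: DN map and its adjoint} as
\begin{align*}
(\Lambda_{b,c}f,g) = B_{b,c}(u_f,g) = B_{b,c}(u_f,u_g^{\ast}) - B_{b,c}(u_f, u_g^{\ast}-g),
\end{align*}
and observe that since $u_g^{\ast}-g \in \widetilde{H}^s(\Omega)$ and $u_f$ solves $B_{b,c}(u_f,\phi)=0$ for all $\phi \in \widetilde{H}^s(\Omega)$, the second term vanishes; hence $(\Lambda_{b,c}f,g) = B_{b,c}(u_f,u_g^{\ast})$. By exactly the same argument applied with roles exchanged and using that $u_g^{\ast}$ is a weak solution of \eqref{eq:adjoint}, i.e.\ $B_{b,c}^{\ast}(u_g^{\ast},\psi)=0$ for all $\psi \in \widetilde{H}^s(\Omega)$, I obtain
\begin{align*}
(\Lambda_{b,c}^{\ast}g,f) = B_{b,c}^{\ast}(u_g^{\ast},f) = B_{b,c}^{\ast}(u_g^{\ast},u_f).
\end{align*}
Finally, an application of \eqref{eq:non_symm} gives $B_{b,c}(u_f,u_g^{\ast})=B_{b,c}^{\ast}(u_g^{\ast},u_f)$, which upon interpreting the duality pairing $\mathbb{X}\times \mathbb{X}^{\ast}\to \R$ symmetrically yields \eqref{eq: DN map and its adjoint}.

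I do not expect any serious obstacle: the only subtle point is that the bilinear form identity \eqref{eq:non_symm} was derived in Lemma \ref{lem:bilin_form} on $H^s(\R^n)\times H^s(\R^n)$ (so it legitimately applies to the pair $(u_f,u_g^{\ast})$ rather than only to compactly supported test functions), and that the homogeneous equation for $u_f$ (respectively the adjoint equation for $u_g^{\ast}$) gives vanishing of $B_{b,c}(u_f,\cdot)$ (respectively of $B_{b,c}^{\ast}(u_g^{\ast},\cdot)$) on all of $\widetilde{H}^s(\Omega)$, not merely on $C_c^{\infty}(\Omega)$. Both facts have already been established in the preceding well-posedness discussion, so the lemma reduces to a two-line bookkeeping argument.
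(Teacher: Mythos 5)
Your proposal is correct and follows essentially the same approach as the paper: both observe that by the independence of the duality pairing from the choice of extension into $\Omega$ one may take $u_g^{\ast}$ as the extension of $g$ (and $u_f$ of $f$), so the claim reduces to $B_{b,c}(u_f,u_g^{\ast})=B_{b,c}^{\ast}(u_g^{\ast},u_f)$, which is \eqref{eq:non_symm}. The only difference is that you spell out the extension-independence step explicitly, whereas the paper invokes it in one line and then writes the chain of equalities directly.
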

	
	\begin{proof}
		The claim follows from the independence
		of the DN map and the adjoint DN map of the extension of $g$ into $\Omega$ in the duality pairing from Definition \ref{defi:DtN}. Let $u_f$ be a solution to \eqref{Dirichlet problem} with exterior data $f\in \mathbb{X}$ and let $u^{\ast}_g$ be a solution of the adjoint equation \eqref{eq:adjoint} with exterior data $g \in \mathbb{X}$. Then, by \eqref{eq:non_symm} and the definition of $\Lambda_{b,c}$, $\Lambda_{b,c}^{\ast}$
		\begin{align*}
		(\Lambda_{b,c}f,g)
		&= B_{b,c}(u_f, u^{\ast}_g)
		= B_{b,c}^{\ast}(u^{\ast}_g, u_f) 
		= (\Lambda_{b,c}^{\ast}g,f) =  (f,\Lambda_{b,c}^{\ast}g). \qedhere
		\end{align*}
	\end{proof}

	With this at hand, we seek to derive a corresponding Alessandrini type identity. It will play a key role in our uniqueness and stability arguments.
	
	\begin{lem}[Alessandrini identity]
		Let $\Omega \subset \R^n$, $n\geq 1$, be a bounded Lipschitz domain and $\frac{1}{2}<s<1$. Let $b_j\in W^{1-s,\infty}(\Omega)^n$ and $c_j\in L^\infty(\Omega)$ be such that \eqref{eigenvalue condition} holds for $j=1,2$. For any $f_1, f_2\in \mathbb{X}$, we have 
		\begin{align}\label{eq: Alessandrini}
		\left((\Lambda_{b_1,c_1}-\Lambda_{b_2,c_2})f_1,f_2\right)_{\mathbb{X}^{\ast}\times \mathbb{X}}=((b_1-b_2)\cdot \nabla u_1,u_2^{\ast})_\Omega+((c_1-c_2)u_1,u_2^{\ast})_\Omega,	
		\end{align}
		where $u_1 \in H^s(\R^n)$ is the solution to $((-\Delta)^s+b_1\cdot \nabla +c_1)u_1=0 $ in $\Omega$ and $u_2^{\ast}\in H^s(\R^n)$ is the solution to $(-\Delta)^s u_2^{\ast}-\nabla \cdot (b_2 u_2^{\ast}) + c_2 u_2^{\ast}=0$ in $\Omega$ with $u_1 - f_1 \in \widetilde{H}^s(\Omega)$ and $u_2 - f_2 \in \widetilde{H}^s(\Omega)$.
	\end{lem}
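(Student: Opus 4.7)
The plan is to expand both Dirichlet-to-Neumann pairings through the bilinear forms and let the cancellations do the work. The only subtlety is choosing the right representative within the quotient space $\mathbb{X} = H^s(\R^n)/\widetilde{H}^s(\Omega)$ and invoking the symmetry relation established in Lemma \ref{lem:symmetry}.

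First, I would handle the $(\Lambda_{b_1,c_1}f_1,f_2)$ term directly. By Definition \ref{defi:DtN}, the pairing depends only on the equivalence class of the second argument, so I may pick as a representative of $f_2$ any element of $H^s(\R^n)$ agreeing with $f_2$ modulo $\widetilde{H}^s(\Omega)$. Taking the adjoint solution $u_2^{\ast}$ (which satisfies $u_2^{\ast}-f_2 \in \widetilde{H}^s(\Omega)$) as this representative yields
$$
(\Lambda_{b_1,c_1}f_1,f_2) = B_{b_1,c_1}(u_1,u_2^{\ast}).
$$

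Next, for the $(\Lambda_{b_2,c_2}f_1,f_2)$ term, I would first apply Lemma \ref{lem:symmetry} to move the operator to the adjoint side, obtaining $(\Lambda_{b_2,c_2}f_1,f_2)=(f_1,\Lambda_{b_2,c_2}^{\ast}f_2)$. Unfolding the definition of $\Lambda_{b_2,c_2}^{\ast}$ with $u_2^{\ast}$ as the solution of \eqref{eq:adjoint} associated to $f_2$, and using $u_1$ as a representative for $f_1$ (legitimate for the same quotient-space reason), I get $(f_1,\Lambda_{b_2,c_2}^{\ast}f_2)=B_{b_2,c_2}^{\ast}(u_2^{\ast},u_1)$. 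Then, the key identity \eqref{eq:non_symm} gives $B_{b_2,c_2}^{\ast}(u_2^{\ast},u_1)=B_{b_2,c_2}(u_1,u_2^{\ast})$, so
$$
(\Lambda_{b_2,c_2}f_1,f_2) = B_{b_2,c_2}(u_1,u_2^{\ast}).
$$

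Finally, I would subtract the two expressions. The nonlocal principal part contribution $((-\Delta)^{s/2}u_1,(-\Delta)^{s/2}u_2^{\ast})_{\R^n}$ is identical in both $B_{b_1,c_1}(u_1,u_2^{\ast})$ and $B_{b_2,c_2}(u_1,u_2^{\ast})$ and therefore cancels, leaving precisely
$$
B_{b_1,c_1}(u_1,u_2^{\ast}) - B_{b_2,c_2}(u_1,u_2^{\ast}) = ((b_1-b_2)\cdot \nabla u_1, u_2^{\ast})_\Omega + ((c_1-c_2) u_1, u_2^{\ast})_\Omega,
$$
which is the desired identity \eqref{eq: Alessandrini}. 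There is no real obstacle here beyond bookkeeping; the subtle points are (i) the freedom to choose $u_2^{\ast}$ and $u_1$ as representatives (guaranteed by the quotient-space definition of $\Lambda_{b,c}$ and the well-posedness in Proposition \ref{prop:well_posed}), and (ii) the careful use of both the symmetry Lemma \ref{lem:symmetry} and the bilinear identity \eqref{eq:non_symm} to switch from the adjoint bilinear form back to the original one.
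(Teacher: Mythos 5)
Your argument is essentially the paper's own proof: the paper also writes $((\Lambda_{b_1,c_1}-\Lambda_{b_2,c_2})f_1,f_2)=(\Lambda_{b_1,c_1}f_1,f_2)-(f_1,\Lambda_{b_2,c_2}^{\ast}f_2)$ via Lemma \ref{lem:symmetry}, unfolds to $B_{b_1,c_1}(u_1,u_2^{\ast})-B_{b_2,c_2}^{\ast}(u_2^{\ast},u_1)$, and then reads off the difference. You merely make explicit the intermediate step $B_{b_2,c_2}^{\ast}(u_2^{\ast},u_1)=B_{b_2,c_2}(u_1,u_2^{\ast})$ via \eqref{eq:non_symm}, which the paper leaves implicit.
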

	\begin{proof}
		By \eqref{eq: DN map and its adjoint}, one has 
		\begin{align*}
		((\Lambda_{b_1,c_1}-\Lambda_{b_2,c_2})f_1,f_2)=&(\Lambda_{b_1,c_1}f_1,f_2)-(f_1,\Lambda_{b_2,c_2}^{\ast}f_2) \\
		=& B_{b_1,c_1}(u_1,u_2^{\ast})-B_{b_2,c_2}^{\ast}(u_2^{\ast},u_1)\\
		=& ((b_1-b_2)\cdot \nabla u_1,u_2^{\ast})_\Omega+((c_1-c_2)u_1,u_2^{\ast})_\Omega. \qedhere
		\end{align*}
	\end{proof}

	Last but not least, we define the Poisson operator associated with the equation \eqref{eq:main}:
	
	\begin{defi}
		\label{defi:Poisson}
		Let $\Omega \subset \R^n$, $n\geq 1$, be a bounded open Lipschitz set. Let $s\in (\frac{1}{2},1)$ and assume that \eqref{eigenvalue condition} holds. Then, the Poisson operator $P_{b,c}$ associated with \eqref{eq:main} is defined as
		\begin{align}\label{Poisson operators}
		P_{b,c}: \mathbb{X} \rightarrow H^{s}(\R^n), \ f \mapsto u_f,
		\end{align}
		where $u_f \in H^s(\mathbb{R}^n)$ with $u_f-f\in \widetilde{H}^s(\Omega)$ denotes the unique solution to \eqref{eq:main}. 
	\end{defi}

	\section{Approximation property}\label{sec:3}
	
	In this section we discuss the Runge approximation property for solutions to \eqref{eq:main}. To this end, we first recall the \emph{strong uniqueness} property for the fractional Laplacian, which was proved in \cite[Theorem 1.2]{ghosh2016calder}.
	\begin{prop}[Global weak unique continuation]\label{prop strong unique}
		Let $n\geq 1$, $s\in (0,1)$ and $u\in H^{-r}(\R^n)$ for some $r\in \R$. Assume that for some open set $W\subset \R^n$ we have
\begin{align*}		
		u=(-\Delta)^su =0 \mbox{ in } W.
\end{align*}		
Then $u\equiv 0$ in $\R^n$.
	\end{prop}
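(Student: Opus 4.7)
The plan is to reduce this nonlocal statement to a unique continuation result for a local degenerate elliptic equation via the Caffarelli--Silvestre extension. Given $u \in H^{-r}(\R^n)$ with $u = (-\Delta)^s u = 0$ on the open set $W$, define its extension $\widetilde{u}$ on $\R^{n+1}_+$ as the (distributional) solution of
\begin{equation*}
\nabla \cdot \bigl( x_{n+1}^{1-2s} \nabla \widetilde{u} \bigr) = 0 \text{ in } \R^{n+1}_+, \qquad \widetilde{u}\bigl|_{x_{n+1}=0} = u,
\end{equation*}
belonging to an appropriate weighted Sobolev class. The key property (Caffarelli--Silvestre) is that the weighted conormal derivative satisfies $\lim_{x_{n+1}\to 0^+} x_{n+1}^{1-2s} \p_{n+1} \widetilde{u} = -c_s (-\Delta)^s u$ in the trace sense, for some positive constant $c_s$. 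Under our hypothesis, both the Dirichlet trace $\widetilde{u}|_{x_{n+1}=0}$ and this weighted Neumann trace vanish on $W$, producing full vanishing Cauchy data along $W \times \{0\}$.

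Next, I would reflect $\widetilde{u}$ evenly across the hyperplane $\{x_{n+1} = 0\}$ through the slit $W$. Setting $\widetilde{U}(x', x_{n+1}) := \widetilde{u}(x', |x_{n+1}|)$ on $\R^{n+1}_+ \cup (W \times \R_-)$, the vanishing of the weighted conormal derivative on $W \times \{0\}$ is precisely the compatibility condition needed so that $\widetilde U$ is a weak solution of the degenerate elliptic equation $\nabla \cdot (|x_{n+1}|^{1-2s} \nabla \widetilde{U}) = 0$ across $W \times \{0\}$. Moreover $\widetilde U \equiv 0$ on $W \times \{0\}$, so in fact $\widetilde U$ vanishes on an open subset of its domain of harmonicity.

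The core step is then to invoke a (weak) unique continuation principle for the degenerate elliptic operator $\nabla \cdot (|x_{n+1}|^{1-2s} \nabla \cdot)$, whose weight is an $A_2$ Muckenhoupt weight. This yields $\widetilde U \equiv 0$ throughout the connected open set $\R^{n+1}_+ \cup (W \times \R)$, and in particular $\widetilde{u} \equiv 0$ in $\R^{n+1}_+$. Taking traces to $\R^n \times \{0\}$ gives $u \equiv 0$ in $\R^n$, as required.

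The main obstacle is the unique continuation statement for the weighted operator, which cannot be obtained from classical elliptic unique continuation because the degeneracy on $\{x_{n+1}=0\}$ is the very hyperplane along which we have vanishing. I would establish it via a Carleman estimate tailored to the weight $|x_{n+1}|^{1-2s}$, with a conformally adapted radial weight function of the type used in the Rüland quantitative unique continuation results cited after Proposition~\ref{prop strong unique}. A secondary (minor) issue is making the extension and the trace identities rigorous for $u$ only in $H^{-r}(\R^n)$: this is handled by working with the extension in a suitable weighted distributional framework and using density/duality to reduce to the case of sufficiently regular $u$.
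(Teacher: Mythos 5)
Your proposal is correct and is essentially the paper's own route: the paper does not reprove Proposition \ref{prop strong unique} but imports it from \cite[Theorem 1.2]{ghosh2016calder}, whose proof is exactly your scheme — Caffarelli--Silvestre extension \cite{caffarelli2007extension}, vanishing Dirichlet and weighted Neumann traces on $W\times\{0\}$, even reflection, a Carleman-based unique continuation result for the $A_2$-degenerate operator as in \cite{ruland2015unique}, and mollification to reduce the $H^{-r}$ case to smooth data. The only caveat is the phrase that the reflected solution ``vanishes on an open subset'': $W\times\{0\}$ is not open in $\R^{n+1}$, so this is not literally what drives the argument; the substance is, as you correctly state afterwards, unique continuation from vanishing Cauchy data on the degenerate hyperplane, which is precisely the content of the cited Carleman estimates.
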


	By using the above strong uniqueness property and a duality argument, one can derive the following Runge approximation (which then immediately entails Theorem \ref{thm: Runge approximation} (a)).
	
	\begin{lem}
		\label{lem:Runge}
		For $n\geq 1$ and $\frac{1}{2}<s<1$, let $\Omega \subset \R^n$ be a bounded, open Lipschitz set, $b\in W^{1-s,\infty}(\Omega)^n$  
and $c\in L^\infty(\Omega)$ and let $P_{b,c}$ denote the Poisson operator from Definition \ref{defi:Poisson}. Further let $W\subset \Omega_e$ be an arbitrary open set. Then we have the following results: 
		\begin{itemize}
			\item[(a)] The set 
			\begin{align*}
			\mathcal{D}_1:=\left\{P_{b,c}f-f: \ f\in C^\infty _c (W)\right\}
			\end{align*}
			is dense in $L^2(\Omega)$.
			
			\item[(b)] The set 
			\begin{align*}
			\mathcal{D}_2:=\left\{P_{b,c}f-f: \ f\in C^\infty _c (W)\right\}
			\end{align*}
			is dense in $\widetilde{H}^s(\Omega)$.
		\end{itemize}
		
	\end{lem}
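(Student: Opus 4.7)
The plan is to prove both (a) and (b) by a single Hahn--Banach duality argument combined with the strong unique continuation property (Proposition \ref{prop strong unique}) for $(-\Delta)^s$, essentially in the style of \cite{ghosh2016calder} but with the adjoint equation replacing the self-adjoint equation. Since $\mathcal{D}_1 = \mathcal{D}_2$ as a set, and since $\mathcal{D}_1 \subset \widetilde{H}^s(\Omega)$ by definition of the Poisson operator, it suffices to show that any continuous linear functional on the larger dual that annihilates the set must vanish; the dense embedding $L^2(\Omega) \hookrightarrow H^{-s}(\Omega) = (\widetilde{H}^s(\Omega))^{\ast}$ will let us deduce both (a) and (b) from the same argument.

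First, fix $v \in (\widetilde{H}^s(\Omega))^{\ast}$ with $\langle v, P_{b,c}f - f\rangle = 0$ for every $f \in C^\infty_c(W)$. Using the eigenvalue condition \eqref{eigenvalue condition}, which by Remark \ref{rmk:conditions_s}(a) is equivalent to the analogous condition for the adjoint operator, Proposition \ref{prop:well_posed}(b) produces a unique $\phi \in \widetilde{H}^s(\Omega)$ solving the adjoint problem with source $v$, that is,
\begin{align*}
B_{b,c}^{\ast}(\phi, w) = \langle v, w\rangle \quad\text{for all } w \in \widetilde{H}^s(\Omega).
\end{align*}
In particular $\phi$ has support contained in $\overline{\Omega}$, so $\phi \equiv 0$ on $W \subset \Omega_e$.

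Next, I will use the identity $B_{b,c}(u,w) = B_{b,c}^{\ast}(w,u)$ from \eqref{eq:non_symm} to transfer the pairing onto the exterior datum $f$. Writing $u = P_{b,c}f$ and noting that $u - f \in \widetilde{H}^s(\Omega)$ is an admissible test element,
\begin{align*}
\langle v, P_{b,c}f - f\rangle = B_{b,c}^{\ast}(\phi, u-f) = B_{b,c}(u-f, \phi) = B_{b,c}(u,\phi) - B_{b,c}(f,\phi).
\end{align*}
Since $u$ solves the direct equation and $\phi \in \widetilde{H}^s(\Omega)$, the first term vanishes. For the second term, because $f$ is supported in $W \subset \Omega_e$ while the drift and potential contributions are integrated over $\Omega$, the only surviving piece is the nonlocal energy $((-\Delta)^{s/2}f,(-\Delta)^{s/2}\phi)_{\R^n}$, which by $f \in C^\infty_c(W)$ equals $(f,(-\Delta)^s\phi)_W$. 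Thus
\begin{align*}
0 = \langle v, P_{b,c}f - f\rangle = -\bigl(f,(-\Delta)^s \phi\bigr)_W \quad\text{for every } f \in C^\infty_c(W),
\end{align*}
which forces $(-\Delta)^s \phi = 0$ on $W$.

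Combining $\phi|_W = 0$ with $(-\Delta)^s \phi|_W = 0$, the strong uniqueness result Proposition \ref{prop strong unique} yields $\phi \equiv 0$ in $\R^n$. Plugging this back into $B_{b,c}^{\ast}(\phi, \cdot) = \langle v,\cdot\rangle$ gives $v = 0$ in $(\widetilde{H}^s(\Omega))^{\ast}$. Applying this with $v \in H^{-s}(\Omega) = (\widetilde{H}^s(\Omega))^{\ast}$ proves (b) by Hahn--Banach, and restricting to $v \in L^2(\Omega) \hookrightarrow H^{-s}(\Omega)$ proves (a). The main technical point to handle carefully is the manipulation $B_{b,c}(u-f, \phi) = B_{b,c}(u,\phi) - B_{b,c}(f,\phi)$, which requires the bilinear forms to extend boundedly to $H^s(\R^n) \times H^s(\R^n)$; this is exactly the content of Lemma \ref{lem:bilin_form}, and is the step where the hypothesis $s > \tfrac{1}{2}$ (via the Kato--Ponce estimate) is used.
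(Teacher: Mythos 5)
Your proposal is correct and follows essentially the same route as the paper: a Hahn--Banach duality argument in which the annihilating functional is realized as the source of the adjoint exterior value problem (via Proposition \ref{prop:well_posed}(b) and the eigenvalue condition), the identity \eqref{eq:non_symm} reduces the pairing to $-B_{b,c}(f,\varphi)$, the support of $f$ in $W$ kills the drift and potential terms, and Proposition \ref{prop strong unique} forces $\varphi\equiv 0$ and hence the functional to vanish. The treatment of (a) by viewing $L^2(\Omega)\hookrightarrow (\widetilde{H}^s(\Omega))^{\ast}$ matches the paper's remark that both parts follow from the same duality argument.
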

	\begin{proof}
		The proofs of (a) and (b) are similar and follow from a  duality argument. Hence, we only need to prove the case (b). First, it is easy to see that $\mathcal D_2 \subset \widetilde H^s(\Omega)$. By the Hahn-Banach theorem, it suffices to show that for any $F\in (\widetilde{H}^s(\Omega))^*$ such that $F(v)=0$ for any $v\in \mathcal D_2$, we must have $F\equiv 0$. Since $F(v)=0$ for any $v\in \mathcal D_2$, we have 
		\begin{align}\label{Runge 1}
		F(P_{b,c}f-f)=0 \text{ for }f\in C^\infty_c (W).
		\end{align}
		
		Next, we claim that 
		\begin{align}\label{Runge 2}
		F(P_{b,c}f-f)=-B_{b,c}(f,\varphi), \text{ for }f\in C^\infty_c(W),
		\end{align}
		where $\varphi \in \widetilde{H}^s(\Omega)$ is the unique solution of 
		\begin{align*}
		(-\Delta)^s \varphi -\nabla \cdot (b \varphi) +c \varphi =F \text{ in }\Omega \text{ with }\varphi=0 \text{ in }\Omega_e.
		\end{align*}
We remark that by Proposition \ref{prop:well_posed} and the assumption \eqref{eigenvalue condition} this problem is well-posed. In its weak form, it becomes 
		\begin{align*}	
		B_{b,c}^{\ast}(\varphi, w)=F(w) \mbox{ for } w\in \widetilde{H}^s(\Omega).
		\end{align*}
		
		We next address the proof of \eqref{Runge 2}. Let $f\in C^\infty_c(W)$ and $u_f:=P_{b,c}f\in H^s(\R^n)$, then $u_f-f\in \widetilde{H}^s(\Omega)$ and 
		$$
		F(P_{b,c}f-f)=B_{b,c}^{\ast}(\varphi,u_f-f)= B_{b,c}(u_f-f,\varphi)=-B_{b,c}(f,\varphi),
		$$
		where we have utilized \eqref{eq:non_symm} and the fact that $u_f$ is a solution to \eqref{eq:main} and $\varphi \in \widetilde{H}^s(\Omega)$. By means of the relations \eqref{Runge 1} and \eqref{Runge 2}, we hence conclude that 
		$$
		B_{b,c}(f,\varphi )=0 \text{ for }f\in C^\infty_c (W).
		$$
		Since $f =0$ in $\R^n \setminus \overline{W}$ and $\varphi \in \widetilde H^s(\Omega)$, $B_{b,c}(f,\varphi)=0$ implies that 
		$$
		0=((-\Delta)^{s/2}\varphi ,(-\Delta)^{s/2}f)_{\R^n}=((-\Delta)^{s}\varphi,f)_{\R^n},\text{ for }f\in C^\infty_c(W).
		$$
In particular, $\varphi\in H^s(\R^n)$ satisfies 
		$$
		\varphi = (-\Delta)^s \varphi =0 \text{ in }W.
		$$
By invoking Proposition \ref{prop strong unique}, this implies $\varphi \equiv 0 $ in $\R^n$ and therefore $F\equiv 0$ as well.
	\end{proof}
	
The above lemma proves Theorem \ref{thm: Runge approximation} (a). The proof of Theorem \ref{thm: Runge approximation} (b) is postponed to the last section of this paper. Without major modifications of the above argument, we also obtain the Runge approximation for the adjoint equation.
	
	\begin{cor}\label{cor: Runge adjoint}
		For $n\geq 1$ and $\frac{1}{2}<s<1$, let $\Omega \subset \R^n$ be a bounded, open Lipschitz set, $b\in W^{1-s,\infty}(\Omega)^n$ 
		and $c\in L^\infty(\Omega)$ and let $P_{b,c}^{\ast}$ denote the Poisson operator for the equation \eqref{adjoint Dirichlet problem}, i.e. let 
		$$
		P_{b,c}^{\ast}:\mathbb{X} \to H^s(\R^n), \ f^\ast\mapsto  u_{f^\ast }^\ast,
		$$
		where $u_{f^\ast}^\ast \in H^s(\R^n)$ is the solution of the adjoint equation
		$$
		(-\Delta)^su_{f^\ast}^\ast-\nabla \cdot (bu_{f^\ast}^\ast)+cu_{f^\ast}^\ast=0 \text{ in }\Omega \text{ with }u_{f^\ast}^\ast=f^\ast \text{ in }\Omega_e.
		$$
		Let $W\subset \Omega_e$ be an arbitrary open set. Then the sets $\mathcal{D}_1^\ast:=\left\{P_{b,c}^{\ast}f^\ast-f^\ast: \ f^\ast\in C^\infty _c (W)\right\}$ and $\mathcal{D}_2^\ast:=\left\{P_{b,c}^\ast f^\ast-f^\ast: \ f^\ast\in C^\infty _c (W)\right\}$ are dense in $L^2(\Omega)$ and $\widetilde{H}^s(\Omega)$, respectively.	
	\end{cor}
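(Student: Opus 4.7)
The plan is to mirror the duality argument of Lemma \ref{lem:Runge}, exchanging the roles of $B_{b,c}$ and $B^{\ast}_{b,c}$. Since the density in $L^2(\Omega)$ and in $\widetilde H^s(\Omega)$ follow by essentially the same reasoning, I would treat only the $\widetilde H^s(\Omega)$ case; the $L^2(\Omega)$ version is identical modulo the choice of test space.

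By Hahn-Banach, it suffices to show that any $F \in (\widetilde H^s(\Omega))^{\ast}$ with $F \equiv 0$ on $\mathcal D_2^{\ast}$ must vanish identically. Here we use the eigenvalue condition \eqref{eigenvalue condition} (which by Remark \ref{rmk:conditions_s}(a) is equivalent to the analogous non-triviality condition for the adjoint problem), together with Proposition \ref{prop:well_posed}, to solve the \emph{non-adjoint} auxiliary equation: there exists a unique $\varphi \in \widetilde H^s(\Omega)$ with
\[
B_{b,c}(\varphi, w) = F(w) \quad \text{for all } w \in \widetilde H^s(\Omega),
\]
which formally means $((-\Delta)^s + b\cdot \nabla + c)\varphi = F$ in $\Omega$ and $\varphi = 0$ in $\Omega_e$.

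Next, for $f^{\ast} \in C^{\infty}_c(W)$ set $u^{\ast}_{f^{\ast}} := P_{b,c}^{\ast} f^{\ast}$, so that $u^{\ast}_{f^{\ast}} - f^{\ast} \in \widetilde H^s(\Omega)$. Using the symmetry relation \eqref{eq:non_symm} and the fact that $u^{\ast}_{f^{\ast}}$ is a weak solution of the adjoint equation (giving $B_{b,c}^{\ast}(u^{\ast}_{f^{\ast}}, \varphi) = 0$ since $\varphi \in \widetilde H^s(\Omega)$), I would compute
\[
F(u^{\ast}_{f^{\ast}} - f^{\ast}) \;=\; B_{b,c}(\varphi, u^{\ast}_{f^{\ast}} - f^{\ast}) \;=\; B_{b,c}^{\ast}(u^{\ast}_{f^{\ast}} - f^{\ast}, \varphi) \;=\; -B_{b,c}^{\ast}(f^{\ast}, \varphi).
\]
Combined with the assumption that $F$ annihilates $\mathcal D_2^{\ast}$, this gives $B_{b,c}^{\ast}(f^{\ast}, \varphi) = 0$ for every $f^{\ast} \in C^{\infty}_c(W)$.

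The closing step exploits the support condition: since $\supp(f^{\ast}) \subset W \subset \Omega_e$, the drift and potential terms in $B_{b,c}^{\ast}(f^{\ast}, \varphi)$, which are integrated over $\Omega$, vanish, leaving
\[
((-\Delta)^{s/2} f^{\ast}, (-\Delta)^{s/2} \varphi)_{\R^n} \;=\; ((-\Delta)^s \varphi, f^{\ast})_{\R^n} \;=\; 0 \quad \text{for all } f^{\ast} \in C^{\infty}_c(W).
\]
Together with $\varphi = 0$ in $\Omega_e \supset W$ this yields $\varphi = (-\Delta)^s \varphi = 0$ in $W$, and an application of the strong unique continuation property (Proposition \ref{prop strong unique}) forces $\varphi \equiv 0$ on $\R^n$, hence $F \equiv 0$. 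The only point that differs in substance from Lemma \ref{lem:Runge} is the need to invoke Remark \ref{rmk:conditions_s}(a) to apply the well-posedness theory to the original (non-adjoint) auxiliary problem given only the eigenvalue condition for $((-\Delta)^s + b\cdot\nabla + c)$; this is the single, minor obstacle, and is already recorded in the excerpt.
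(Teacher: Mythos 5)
Your proof is correct and takes precisely the route the paper indicates when it says the corollary follows "without major modifications" from Lemma~\ref{lem:Runge}: you solve the non-adjoint auxiliary equation for $\varphi\in\widetilde H^s(\Omega)$, use \eqref{eq:non_symm} to transfer to $B^{\ast}_{b,c}$, exploit that $u^{\ast}_{f^{\ast}}$ solves the adjoint equation, and close with Proposition~\ref{prop strong unique}. One small misattribution: the non-adjoint auxiliary problem $B_{b,c}(\varphi,\cdot)=F(\cdot)$ is covered directly by \eqref{eigenvalue condition} and Proposition~\ref{prop:well_posed} without appealing to Remark~\ref{rmk:conditions_s}(a); what does require the equivalence of \eqref{eigenvalue condition} with the adjoint eigenvalue condition is the well-definedness of $P^{\ast}_{b,c}$ itself.
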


	\section{Global uniqueness}\label{sec:4}
	
	In this section, we prove the global uniqueness result from Theorem \ref{thm main} which follows from the knowledge of the DN map $\Lambda_{b,c}$ and its adjoint.

	\begin{proof}[Proof of Theorem \ref{thm main}]
		Since $\Lambda_{b_1,c_1}f|_{W_2}=\Lambda_{b_2,c_2}f|_{W_2}$ for any $f\in C^\infty_c(W_1)$, where $W_1,W_2$ are arbitrary, but fixed open sets in $\Omega_e$, by using the Alessandrini identity \eqref{eq: Alessandrini}, we have 
		\begin{align}\label{eq: zero integral id}
		\int_\Omega \left((b_1-b_2)\cdot \nabla u_1 u_2^{\ast}+(c_1-c_2)u_1u_2^{\ast}\right) dx=0,
		\end{align}
		where $u_1 \in H^s(\R^n)$ is the solution to $((-\Delta)^s+b_1\cdot \nabla +c_1)u_1=0 $ in $\Omega$ and $u_2^{\ast}\in H^s(\R^n)$ is the solution to $(-\Delta)^s u_2^{\ast}- \nabla \cdot (b_2 u_2^{\ast}) + c_2 u_2^{\ast}=0$ in $\Omega$ with exterior data  $u_1|_{\Omega_e}=f_1\in C^\infty_c(W_1)$ and $u_2^{\ast}|_{\Omega_e}=f_2^{\ast}\in C^\infty_c(W_2)$. In the sequel, we seek to recover the potential and the drift coefficients separately.
		
		\emph{Step 1: Recovery of the potential $c$.} Let $\psi_2 \in C_c^\infty(\Omega)$ be arbitrary. Then choose $\psi_1 \in C_c^\infty(\Omega)$ such that $\psi_1 = 1$ on the set $\supp(\psi_2) \Subset \Omega$. By the Runge approximation of $(-\Delta)^s+b\cdot \nabla +c$ and its adjoint (see Lemma \ref{lem:Runge} and Corollary \ref{cor: Runge adjoint}), there exist sequences of solutions  $\{u_j^1\}_{j=1}^\infty$ and $\{u_j^{2,\ast}\}_{j=1}^\infty$ in $H^s(\R^n)$ such that 
		\begin{align}\label{eq:rungeapply}
		\begin{split}
		&((-\Delta)^s+b_1\cdot \nabla +c_1)u_j^1= (-\Delta)^s u_j^{2,\ast} - \nabla \cdot (b_2 u_j^{2,\ast}) + c_2 u_j^{2,\ast}=0\text{ in }\Omega,\\
		& \mathrm{supp}(u_j^1)\subset \overline{\Omega_1}\text{ and }\mathrm{supp}(u_j^{2,\ast})\subset \overline{\Omega_2},\\
		& u_j^1|_\Omega = \psi_1 +r_j^1 \text{ and }u_j^{2,\ast}|_\Omega = \psi_2+r_j^{2,\ast},
		\end{split}
		\end{align}
		where $\Omega_1, \Omega_2$ are open sets in $\R^n$ containing $\overline{\Omega}$,  
		and $r_j^1, r_j^{2,\ast}\to 0$ strongly in $\widetilde{H}^s(\Omega)$ as $j\to \infty$.

With these solutions at hand, we observe that as $r_j^1, r_j^{2,\ast} \in \widetilde{H}^s(\Omega)$
		\begin{align}
		\label{eq:drift_van}
		\begin{split}
		\left| \int\limits_{\Omega} (b_1-b_2) \cdot \nabla u_j^1 u_j^{2,\ast} dx \right|
		&\leq \left| \int\limits_{\Omega} (b_1 -b_2) \cdot \nabla r_j^{1} r_j^{2,\ast} dx \right| +  \left| \int\limits_{\Omega} (b_1 -b_2) \cdot \nabla r_j^{1} \psi_2 dx \right|\\
		&+ \left| \int\limits_{\Omega} (b_1 -b_2) \cdot \nabla \psi_1 \psi_2 dx \right| + \left| \int\limits_{\Omega} (b_1 -b_2) \cdot \nabla \psi_1 r_j^{2, \ast} dx \right|\\
		& \leq \| \nabla r_j^1\|_{H^{s-1}(\Omega)} \|(b_1-b_2) r_j^{2,\ast}\|_{\widetilde{H}^{1-s}(\Omega)}\\  
		& \quad + \| \nabla r_j^{1}\|_{H^{s-1}(\Omega)} \|(b_1-b_2) \psi_2\|_{\widetilde{H}^{1-s}(\Omega)}\\
		& \quad + \| \nabla \psi_1\|_{H^{s-1}(\Omega)} \|(b_1-b_2) r_j^{2,\ast}\|_{\widetilde{H}^{1-s}(\Omega)}\\
		& \leq  C\|b_1-b_2\|_{W^{1-s,\infty}(\Omega)} \| \nabla r_j^1\|_{H^{s-1}(\Omega)} \|r_j^{2,\ast}\|_{\widetilde{H}^{1-s}(\Omega)}\\
		& \quad + C\|b_1 - b_2\|_{W^{1-s,\infty}(\Omega)}\| \nabla r_j^{1}\|_{H^{s-1}(\Omega)} \|\psi_2\|_{\widetilde{H}^{1-s}(\Omega)}\\
		& \quad + C\|b_1-b_2\|_{W^{1-s,\infty}(\Omega)} \|  r_j^{2, \ast}\|_{\widetilde{H}^{1-s}(\Omega)} \|\nabla \psi_1\|_{H^{s-1}(\Omega)}\\
		& \leq C\|b_1-b_2\|_{W^{1-s,\infty}(\Omega)} \Big(\|r_j^{1}\|_{\widetilde{H}^{s}(\Omega)} \|r_j^{2,\ast}\|_{\widetilde{H}^{s}(\Omega)}\\
		& \quad + \|r_j^{1}\|_{\widetilde{H}^{s}(\Omega)}\|\psi_2\|_{\widetilde{H}^{1-s}(\Omega)} + \| r_j^{2, \ast}\|_{\widetilde{H}^{s}(\Omega)} \|\psi_1\|_{\widetilde{H}^{s}(\Omega)}\Big)\\
		& \rightarrow 0, \text{ as }j \rightarrow \infty.
		\end{split}
		\end{align}	
Here we have used that $b_1-b_2\in W^{1-s,\infty}(\Omega)$ is a bounded multiplier from $\widetilde{H}^{1-s}(\Omega)$ into itself (which follows from the same argument as in the proof of Proposition \ref{prop:well_posed}) and the estimate $\|u\|_{\widetilde{H}^{1-s}(\Omega)} \leq C \|u\|_{\widetilde{H}^{s}(\Omega)}$ for $s\in (\frac{1}{2},1)$. Further, we made strong use of the support assumptions for $\psi_1$ and $\psi_2$, which in particular allow us to drop the third term in the first estimate in \eqref{eq:drift_van}.

		Inserting these solutions $\{u_j^1\}$, $\{u_j^{2,\ast}\}$ and the estimate \eqref{eq:drift_van} into \eqref{eq: zero integral id} and taking $j\to \infty$, together with the assumptions on $\psi_1$ and $\psi_2$, we derive 
		$$
		\int_\Omega (c_1-c_2) \psi_2 dx =0.
		$$
		Since $\psi_2 \in C_c^\infty(\Omega)$ was arbitrary, by density of $C_c^\infty(\Omega)$ in $L^2(\Omega)$ and the previous identity, we obtain that $c_1 = c_2$ in $\Omega$.

		\emph{Step 2: Recovery of the drift $b$.} Since we have $c_1 = c_2 $ in $\Omega$, \eqref{eq: zero integral id} becomes 
		\begin{align}\label{eq: drift identity}
		\int_\Omega (b_1-b_2)\cdot \nabla u_1 u_2^{\ast} dx =0.
		\end{align}
		Fix an arbitrary $\psi_2 \in C_c^\infty(\Omega)$. Then choose $\psi_{x_k}\in C_c^\infty(\Omega)$ equal $x_k$ on $\supp(\psi_2) \Subset \Omega$, where for $k=1,2,\cdots,n$ the function $x_k$ denotes the restriction to the $k$-th component of $x$. 
		By using the Runge approximation again as in \eqref{eq:rungeapply}, we can find sequences of solutions $u_j^1 = \psi_{x_k} + r_j^{1}$ and $u_j^{2, \ast} = \psi_2 + r_j^{2, \ast}$, with $r_j^{1}, r_j^{2, \ast} \to 0$ strongly in $\widetilde{H}^s(\Omega)$ as $j\to \infty$. Plugging the Runge approximations of the functions $\psi_{x_k}$ and $\psi_2$ into \eqref{eq: drift identity}, we obtain 
		$$
		\int_\Omega (b_1-b_2)_k \psi_2 dx + \int\limits_{\Omega} (b_1-b_2) \cdot \nabla r_j^{1} \psi_2 dx +  \int\limits_{\Omega} (b_1-b_2) \cdot \nabla r_j^{1} r_j^{2,\ast} dx 
+ \int\limits_{\Omega} (b_1-b_2) \cdot \nabla{\psi_{x_k}} r_j^{2,\ast} dx=0,
		$$
		for $k=1,2,\cdots,n$, where $(b_1-b_2)_k$ denotes the $k$-th component of the vector valued function $b_1-b_2$. 
		By arguing similarly as in \eqref{eq:drift_van}, in the limit $j\rightarrow \infty$ we arrive at
		\begin{align*}
		\int_\Omega (b_1-b_2)_k \psi_2 dx = 0 \text{ for }k=1,2,\cdots,n.
		\end{align*}		
		Since $\psi_2 \in C_c^\infty(\Omega)$ is arbitrary and as $C_c^\infty(\Omega)$ is dense in $L^2(\Omega)$, we also conclude that $(b_1-b_2)_k=0$ for all $k=1,2,\cdots,n$. Therefore, $b_1=b_2$, which completes the proof. 
	\end{proof}

\section{Stability}\label{sec:stability}

In this section, we study the stability result for the fractional Schr\"odinger equation with drift.

\subsection{Auxiliary results}
\label{sec:aux}
We begin by proving several auxiliary results, which will be used in deducing a quantitative Runge approximation result in the next section. To this end, we will mainly be studying the dual equation to \eqref{eq:main}
\begin{align}
\label{eq:dual}
\begin{split}
(-\D)^s w - \nabla \cdot (b w) + c w & = v \mbox{ in } \Omega,\\
w & = 0 \mbox{ in } \Omega_e.
\end{split}
\end{align}
Throughout this section, we assume that the drift field $b$ and the potential $c$ satisfy \eqref{eigenvalue condition}.

\begin{lem}
	\label{lem:two_sided_est}
	Let $\Omega \subset \R^n$ be an open, bounded Lipschitz domain. Let $s\in(\frac{1}{2},1)$, $b\in W^{1-s,\infty}(\Omega)^n$, $c\in L^{\infty}(\Omega)$, $v\in H^{-s}(\Omega)$ and assume that $w\in H^s_{\overline{\Omega}}$ is the solution to \eqref{eq:dual}.
	Then there exists a constant $C>1$ independent of $v,w$ such that
	\begin{align*}
	C^{-1}\|v\|_{H^{-s}(\Omega)} \leq \|w\|_{H^{s}_{\overline{\Omega}}} \leq C \|v\|_{H^{-s}(\Omega)}.
	\end{align*}
\end{lem}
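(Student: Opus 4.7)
The estimate is a standard two-sided bound for a well-posed elliptic-type problem: the upper bound is the continuous dependence on data statement, and the lower bound is just the weak formulation paired with the boundedness of the bilinear form. The plan is as follows.

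\textbf{Upper bound.} The idea is to directly invoke Proposition \ref{prop:well_posed}(b) applied to the adjoint bilinear form $B^{\ast}_{b,c}(\cdot,\cdot)$, with exterior datum $f^{\ast} = 0$, spectral parameter $\lambda^{\ast} = 0$, and source functional $F^{\ast}(\cdot) = (v,\cdot)_{\Omega}$ (which is an element of $(\widetilde{H}^s(\Omega))^{\ast}$ with $\|F^{\ast}\|_{(\widetilde{H}^s(\Omega))^{\ast}} \leq \|v\|_{H^{-s}(\Omega)}$ since $H^{-s}(\Omega) = (\widetilde{H}^s(\Omega))^{\ast}$). The only thing to check is that $0 \notin \Sigma^{\ast}$: this is exactly the content of the equivalent eigenvalue condition given in Remark \ref{rmk:conditions_s}(a), which guarantees that the adjoint problem has only the trivial solution when the source and exterior data vanish. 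The conclusion of Proposition \ref{prop:well_posed}(b) then yields a unique $w \in \widetilde{H}^s(\Omega) = H^s_{\overline{\Omega}}$ with
\[
\|w\|_{H^s_{\overline{\Omega}}} \leq C \|v\|_{H^{-s}(\Omega)},
\]
where $C = C(n,s,\|b\|_{W^{1-s,\infty}(\Omega)}, \|c\|_{L^{\infty}(\Omega)})$ depends additionally on the distance from $0$ to $\Sigma^{\ast}$.

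\textbf{Lower bound.} By definition, the weak form of \eqref{eq:dual} reads
\[
B^{\ast}_{b,c}(w,\phi) = (v,\phi)_{\Omega} \quad \text{for all } \phi \in \widetilde{H}^s(\Omega).
\]
Combining this identity with the continuity estimate \eqref{boundedness of adjoint bilinear form in H^s} from Lemma \ref{lem:bilin_form} gives
\[
|(v,\phi)_{\Omega}| = |B^{\ast}_{b,c}(w,\phi)| \leq C \|w\|_{H^s(\R^n)} \|\phi\|_{H^s(\R^n)}.
\]
Taking the supremum over $\phi \in \widetilde{H}^s(\Omega)$ with $\|\phi\|_{H^s(\R^n)} \leq 1$ and recalling the duality $H^{-s}(\Omega) = (\widetilde{H}^s(\Omega))^{\ast}$ yields
\[
\|v\|_{H^{-s}(\Omega)} \leq C \|w\|_{H^s(\R^n)} = C\|w\|_{H^s_{\overline{\Omega}}}.
\]

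\textbf{Obstacle.} There is no serious obstacle; both directions rely exclusively on machinery already established in Section \ref{sec:2}. The only point requiring attention is to ensure the correct identification of the adjoint eigenvalue condition (via Remark \ref{rmk:conditions_s}(a)) so that Proposition \ref{prop:well_posed}(b) can be applied at $\lambda^{\ast}=0$; the rest is bookkeeping of the duality pairings and absorbing constants into $C$.
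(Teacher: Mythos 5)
Your proof is correct. For the upper bound you do exactly what the paper does: invoke the well-posedness of the adjoint problem from Proposition \ref{prop:well_posed}(b) (with the eigenvalue condition guaranteeing $0 \notin \Sigma^{\ast}$, via Remark \ref{rmk:conditions_s}(a)). For the lower bound, the paper and you arrive at the same estimate by slightly different packaging: the paper writes out the strong form $v = (-\Delta)^s w - \nabla\cdot(bw) + cw$ in $H^{-s}(\Omega)$, applies the triangle inequality, and then bounds each of the three terms separately (the drift via a multiplier/duality estimate, the potential via H\"older and Poincar\'e), whereas you go directly to the weak identity $(v,\phi)_\Omega = B^{\ast}_{b,c}(w,\phi)$ and apply the already-established boundedness \eqref{boundedness of adjoint bilinear form in H^s} of the adjoint bilinear form from Lemma \ref{lem:bilin_form}, then take the supremum in $\phi$. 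Your route is more economical because it reuses Lemma \ref{lem:bilin_form} rather than re-deriving the term-by-term bounds inside the lemma's proof; both yield the same constant, depending on $n$, $s$, $\|b\|_{W^{1-s,\infty}(\Omega)}$, $\|c\|_{L^\infty(\Omega)}$, and the distance of $0$ from $\Sigma^\ast$. The only thing you should make explicit (and the paper also glosses over) is that the $H^{-s}(\Omega)$ quotient norm and the dual norm on $(\widetilde H^s(\Omega))^\ast$ are being identified; this is standard for Lipschitz $\Omega$.
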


\begin{proof}
	The upper bound follows from the well-posedness result of Proposition \ref{prop:well_posed} and the assumption \eqref{eigenvalue condition}. It hence remains to discuss the lower bound. To this end, we use the triangle inequality and the equation \eqref{eq:dual}, which lead to

	\begin{align}
	\label{eq:lower_bd}
	\begin{split}
	\|v\|_{H^{-s}(\Omega)}
	& \leq \|(-\D)^s w\|_{H^{-s}(\Omega)} + \|\nabla \cdot (b w)\|_{H^{-s}(\Omega)} + \|c w\|_{H^{-s}(\Omega)}\\
	& \leq \|(-\D)^s w\|_{H^{-s}(\R^n)} + \|\nabla \cdot (b w)\|_{H^{-s}(\Omega)} + \|c w\|_{H^{-s}(\Omega)}\\
	& \leq \| w\|_{H^{s}_{\overline{\Omega}}} + \|\nabla \cdot (b w)\|_{H^{-s}(\Omega)} + \|c w\|_{H^{-s}(\Omega)}.
	\end{split}
	\end{align}
	We estimate the terms with the drift and the potential separately. On the one hand, by integration by parts, we observe that for the drift, we can find a constant $C>0$ independent of $b$ and $w$ such that 
	\begin{align*}
	\|\nabla \cdot (b w)\|_{H^{-s}(\Omega)}
& = \sup\limits_{\|\phi\|_{\widetilde{H}^{s}(\Omega)}=1}\left|(\nabla \cdot (b w), \phi)_{\Omega}\right| \\
& \leq \sup\limits_{\|\phi\|_{\widetilde{H}^{s}(\Omega)}=1} \|b w\|_{\widetilde{H}^{1-s}(\Omega)} \|\nabla \phi \|_{H^{s-1}(\Omega)}\\
	&\leq C \|w\|_{\widetilde{H}^{1-s}(\Omega)} \|b\|_{W^{1-s,\infty}(\Omega)} \sup\limits_{\|\phi\|_{\widetilde{H}^{s}(\Omega)}=1}  \|\phi\|_{\widetilde{H}^{s}(\Omega)}\\
	&\leq C \|w\|_{H^s_{\overline{\Omega}}} \|b\|_{W^{1-s,\infty}(\Omega)} ,
	\end{align*}
	where we used that for $s\in (1/2,1)$ it holds that $\|w \|_{\widetilde{H}^{1-s}(\Omega)} \leq C\|w \|_{\widetilde{H}^{s}_{\overline{\Omega}}}$.
	
	On the other hand, in order to estimate the potential $c$, we use H\"older's and Poincar\'e's inequalities to observe
	\begin{align*}
	\|c w\|_{H^{-s}(\Omega)}
	& = \sup\limits_{\|\phi\|_{H^{s}_{\overline{\Omega}}}=1}\left|(c w, \phi)_{\Omega}\right|
	= \sup\limits_{\|\phi\|_{H^{s}_{\overline{\Omega}}}=1}\|c\|_{L^{\infty}(\Omega)} \|w\|_{L^2(\Omega)} \|\phi\|_{L^2(\Omega)}
	\leq C\|c\|_{L^{\infty}(\Omega)} \|w\|_{H^s_{\overline{\Omega}}}.
	\end{align*}
	Inserting the estimates for the contributions involving $b,c$ into \eqref{eq:lower_bd} then concludes the proof of Lemma \ref{lem:two_sided_est}.
\end{proof}

Next, we prove Vishik-Eskin type estimates for the fractional Schrödinger equation with drift, c.f. \cite{VE65} and also \cite[Section 3]{Gru14}.

\begin{lem}[Vishik-Eskin]
	\label{lem:VE}
	Let $\delta \in (-\frac{1}{2},\frac{1}{2})$. Let $\Omega \subset \R^n$ be an open, bounded Lipschitz domain. Let $s\in(\frac{1}{2},1)$, $b\in W^{1-s+\delta,\infty}(\Omega)^n$, $c\in L^{\infty}(\Omega)$ satisfy \eqref{eigenvalue condition}, $v\in H^{-s}(\Omega)$ and assume that $w\in H^s_{\overline{\Omega}}$ is the solution to \eqref{eq:dual}.
Then there exists a constant $C>1$ such that
	\begin{align*}
	\|w\|_{H^{s+\delta}_{\overline{\Omega}}}
	\leq C \|v\|_{H^{-s+\delta}(\Omega)}.
	\end{align*}
\end{lem}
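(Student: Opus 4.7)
The natural strategy is to treat the drift and potential terms as lower-order perturbations of $(-\Delta)^s$, and then invoke the Vishik-Eskin-Grubb regularity theory for the homogeneous Dirichlet problem of the fractional Laplacian. Concretely, I would rewrite \eqref{eq:dual} as
\begin{equation*}
(-\Delta)^s w = F \text{ in } \Omega, \qquad w = 0 \text{ in } \Omega_e,
\end{equation*}
where $F := v + \nabla\cdot(bw) - cw$, and use the Vishik-Eskin estimate (in the form due to Eskin or Grubb) which states that for $|\sigma| < \tfrac{1}{2}$ one has $\|u\|_{H^{s+\sigma}_{\overline{\Omega}}} \leq C\|(-\Delta)^s u\|_{H^{-s+\sigma}(\Omega)}$ whenever $u \in H^{s}_{\overline{\Omega}}$. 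The task then reduces to bounding the two perturbative contributions in $H^{-s+\delta}(\Omega)$ in terms of the regularity of $w$ that is currently available.

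The starting point is $w \in H^s_{\overline{\Omega}}$ with $\|w\|_{H^s_{\overline{\Omega}}} \leq C\|v\|_{H^{-s}(\Omega)}$, which is provided by Lemma~\ref{lem:two_sided_est} together with the assumption \eqref{eigenvalue condition}. For the potential term, since $\delta < 1/2 < s$ one has $-s+\delta < 0$, so the $L^\infty$-multiplier property and Poincaré give
\begin{equation*}
\|cw\|_{H^{-s+\delta}(\Omega)} \leq C\|c\|_{L^\infty(\Omega)} \|w\|_{L^2(\Omega)} \leq C\|c\|_{L^\infty(\Omega)} \|w\|_{H^{s}_{\overline{\Omega}}}.
\end{equation*}
For the drift term, I would use an integration-by-parts/duality bound $\|\nabla\cdot(bw)\|_{H^{-s+\delta}(\Omega)} \leq C\|bw\|_{\widetilde{H}^{1-s+\delta}(\Omega)}$, followed by the Kato-Ponce multiplier estimate (as in \eqref{multiplier estimate}) to obtain $\|bw\|_{\widetilde{H}^{1-s+\delta}(\Omega)} \leq C\|b\|_{W^{1-s+\delta,\infty}(\Omega)}\|w\|_{H^{1-s+\delta}_{\overline{\Omega}}}$. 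Assembling and invoking the Vishik-Eskin estimate, one gets
\begin{equation*}
\|w\|_{H^{s+\delta}_{\overline{\Omega}}} \leq C\|v\|_{H^{-s+\delta}(\Omega)} + C\bigl(\|b\|_{W^{1-s+\delta,\infty}(\Omega)} + \|c\|_{L^\infty(\Omega)}\bigr)\|w\|_{H^{s}_{\overline{\Omega}}},
\end{equation*}
and the already-known $H^s$-bound absorbs the error.

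\textbf{Main obstacle and workaround.} The catch is that the Kato-Ponce bound above requires the multiplier exponent $1-s+\delta$ to lie below the regularity exponent $s$ of $w$, i.e.\ $\delta \leq 2s-1$. For $s$ close to $\tfrac{1}{2}$ the admissible $\delta$-range could therefore fall short of the full interval $(-\tfrac{1}{2},\tfrac{1}{2})$ in a single pass. I would handle this by a finite bootstrap: fix a small step size $\eta \in (0, 2s-1)$ and iterate the argument, using at each iteration that $w$ already lies in the space $H^{s+k\eta}_{\overline{\Omega}}$ obtained from the previous step, until $k\eta \geq \delta$. A finite number of iterations suffices since $\delta < 1/2$ is fixed. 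For $\delta \in (-\tfrac{1}{2},0)$, the estimate is weaker than what we already know and can be deduced directly by duality, pairing $w$ with test functions $\phi \in \widetilde{H}^{-s-\delta}(\Omega)$ and using the adjoint version of \eqref{eq:dual}, whose solvability and mapping properties are obtained by the same arguments applied to the adjoint bilinear form $B^{\ast}_{b,c}$. The condition $\delta \in (-\tfrac{1}{2},\tfrac{1}{2})$ is essential and mirrors the sharp range in the underlying Vishik-Eskin theorem, reflecting that higher regularity is obstructed by the boundary singularity of solutions supported in $\overline{\Omega}$.
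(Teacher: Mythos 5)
Your proposal follows essentially the same route as the paper: rewrite \eqref{eq:dual} as a homogeneous Dirichlet problem for $(-\Delta)^s$ with right-hand side $v+\nabla\cdot(bw)-cw$, apply the Vishik--Eskin--Grubb regularity estimate (which the paper cites as \cite[Theorem 3.1]{grubb2015fractional}), and then control the drift contribution by duality plus the Kato--Ponce multiplier bound and the potential contribution by Hölder and Poincar\'e, absorbing both via $\|w\|_{H^s_{\overline{\Omega}}}\leq C\|v\|_{H^{-s}(\Omega)}$ from Lemma \ref{lem:two_sided_est}.

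One point worth flagging: you correctly observe that the single-pass drift estimate needs $\|w\|_{\widetilde{H}^{1-s+\delta}(\Omega)} \lesssim \|w\|_{H^{s}_{\overline{\Omega}}}$, hence $\delta \leq 2s-1$, which can fall short of the full interval $(-\tfrac12,\tfrac12)$ when $s$ is near $\tfrac12$. The paper's proof in fact only runs through the argument for $\delta \in (0,\min\{2s-1,\tfrac12\})$ (see the line preceding the duality computation), so the stated range in the lemma is broader than what is actually established there. Your proposed fixes — a finite bootstrap increasing the Sobolev index in steps of size $\eta<2s-1$ to reach any $\delta<\tfrac12$, and a straightforward duality/embedding argument for $\delta<0$ — are both sound and would genuinely extend the proved range. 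That said, they are not needed for the rest of the paper: Proposition \ref{prop:approx} and the downstream stability argument only invoke the lemma with $\delta\in(0,\tfrac{2s-1}{2})$, which the direct one-pass estimate already covers.
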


The argument for this follows from a perturbation of the original estimates due to Vishik and Eskin \cite{VE65}.

\begin{proof}
	We rewrite the equation \eqref{eq:dual} as
	\begin{align*}
	(-\D)^s w & = G \mbox{ in } \Omega,\\
	w&= 0 \mbox{ in } \Omega_e,
	\end{align*}
	where $G= \nabla \cdot (b w) - cw$. Then the estimates of Vishik and Eskin (see \cite[Theorem 3.1]{grubb2015fractional}) yield that
	\begin{align}
	\label{eq:VE}
	\|w\|_{H^{s+\delta}_{\overline{\Omega}}} \leq C \|G\|_{H^{-s+\delta}(\Omega)}
	\leq C (\|cw\|_{H^{-s+\delta}(\Omega)} + \|\nabla \cdot (bw)\|_{H^{-s+\delta}(\Omega)} + \|v\|_{H^{-s+\delta}(\Omega)}).
	\end{align}
	We estimate the drift and the potential contributions separately: For $\delta \in (0,\min\{2s-1,\frac{1}{2}\})$ by integration by parts and duality
	
	\begin{align*}
	\|\nabla \cdot (b w)\|_{H^{-s+\delta}(\Omega)}
	& = \sup\limits_{\|\phi\|_{\widetilde{H}^{s-\delta}(\Omega)}=1}\left|(\nabla \cdot (bw), \phi)_{L^2(\Omega)}\right| \\
	&= \sup\limits_{\|\phi\|_{\widetilde{H}^{s-\delta}(\Omega)}=1}\left|(bw , \nabla \phi)_{L^2(\Omega)}\right|\\
	& \leq \sup\limits_{\|\phi\|_{\widetilde{H}^{s-\delta}(\Omega)}=1} \|b w\|_{\widetilde{H}^{1-s+\delta}(\Omega)} \|\nabla \phi\|_{H^{s-1-\delta}(\Omega)}\\
	& \leq \sup\limits_{\|\phi\|_{\widetilde{H}^{s-\delta}(\Omega)}=1}\|b\|_{W^{1-s+\delta,\infty}(\Omega)} \|w\|_{\widetilde{H}^{1-s+\delta}(\Omega)} \|\phi\|_{\widetilde{H}^{s-\delta}(\Omega)}\\
	&\leq C \|b\|_{W^{1-s+\delta,\infty}(\Omega)} \|w\|_{H^{s}_{\overline{\Omega}}}.
	\end{align*}
	The potential term is estimated by Hölder's and Poincar\'e's inequalities
	\begin{align*}
	\|c w\|_{H^{-s+\delta}(\Omega)}
	& = \sup\limits_{\|\phi\|_{H^{s}_{\overline{\Omega}}}=1}\left|(c w, \phi)_{\Omega}\right| \\
	& = \sup\limits_{\|\phi\|_{H^{s-\delta}_{\overline{\Omega}}}=1}\|c\|_{L^{\infty}(\Omega)} \|w\|_{L^2(\Omega)} \|\phi\|_{L^2(\Omega)}\\
	&\leq C\|c\|_{L^{\infty}(\Omega)} \|w\|_{H^s_{\overline{\Omega}}}.
	\end{align*}
	Combining these bounds with the estimate $\|w\|_{H^{s}_{\overline{\Omega}}} \leq C \|v\|_{H^{-s}(\Omega)}$, which follows from the the well-posedness result of Proposition \ref{prop:well_posed} and returning to \eqref{eq:VE} concludes the argument.
\end{proof}

\subsection{A quantitative approximation result}
\label{sec:quant_approx}

As a final preparation for the stability proof, in this section we deduce a quantitative Runge approximation result for fractional Schrödinger equations with drift terms:

\begin{prop}
	\label{prop:approx}
	For $n\geq 1$, let $\Omega \subset \R^n$ be a bounded open set with a $C^\infty$-smooth boundary and $s\in (\frac{1}{2},1)$. Let  $W \Subset \Omega_e$ be open such that $\overline{\Omega} \cap \overline{W} = \emptyset$. Further suppose that $\delta \in (0,\frac{2s-1}{2})$ and that $b\in W^{1-s+\delta,\infty}(\Omega)^n$, $c \in L^{\infty}(\Omega)$. Then, for each $\epsilon>0$ and for each $\overline{v}\in H^{s}_{\overline{\Omega}}$ there exists $f_{\epsilon} \in H^{s}_{\overline{W}}$ such that the following approximation estimates hold true
	\begin{align*}
	\|P_{b,c} f_{\epsilon} - f_{\epsilon} - \overline{v}\|_{H^{s-\delta}_{\overline{\Omega}}} \leq \epsilon \|\overline{v}\|_{H^{s}_{\overline{\Omega}}}, \ \|f_{\epsilon}\|_{H^{s}_{\overline{W}}} \leq C e^{C \epsilon^{-\mu(\delta)}} \|\overline{v}\|_{H^{s-\delta}_{\overline{\Omega}}}.
	\end{align*}
\end{prop}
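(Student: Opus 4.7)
The plan is to proceed along the lines of the quantitative Runge approximation developed in \cite{RS17}, adapting it to the presence of the drift term. The construction centres on the compact Runge operator
\begin{equation*}
A: H^s_{\overline W} \to H^{s-\delta}_{\overline\Omega}, \quad Af := P_{b,c}f - f,
\end{equation*}
which is well-defined because $f$ is supported in $\overline W \subset \Omega_e$, so $Af \in \widetilde H^s(\Omega) = H^s_{\overline\Omega}$; compactness follows from $H^s_{\overline\Omega} \hookrightarrow H^{s-\delta}_{\overline\Omega}$. Lemma \ref{lem:Runge}(b) then gives density of the range of $A$ in $H^{s-\delta}_{\overline\Omega}$.

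First I would identify the $L^2$-adjoint of $A$. Taking $v \in H^{-s+\delta}(\Omega)$ and letting $w \in H^s_{\overline\Omega}$ be the unique solution of the dual problem \eqref{eq:dual} with right-hand side $v$, using the symmetry relation \eqref{eq:non_symm} with the test function $\phi = Af = u-f \in \widetilde H^s(\Omega)$, together with $B_{b,c}(u,w) = 0$ and the fact that $f$ vanishes on $\Omega$, one obtains
\begin{equation*}
(Af, v)_{L^2(\Omega)} = -B_{b,c}(f,w) = -\bigl(f,(-\Delta)^s w\bigr)_{L^2(\mathbb{R}^n)} = -\bigl(f, (-\Delta)^s w\big|_W\bigr)_{L^2(W)}.
\end{equation*}
Hence $A^*v = -(-\Delta)^s w|_W$, and quantifying the Runge approximation becomes equivalent to quantifying unique continuation for the dual equation from $W$: if $(-\Delta)^s w|_W$ is small while $w|_W \equiv 0$ by construction, then $w$ must be small in $\Omega$.

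The technical heart of the argument is now to invoke the logarithmic propagation of smallness for the fractional Laplacian from \cite{RS17}, modified to the drift-potential setting. Rewriting \eqref{eq:dual} as $(-\Delta)^s w = v + \nabla\cdot(bw) - cw$ and controlling the right-hand side via Lemma \ref{lem:VE} (for the shift by $\delta$) and Lemma \ref{lem:two_sided_est} (for the base norm), one absorbs the drift and potential contributions into the standard Carleman/three-balls framework from \cite[Section 5]{RS17} and arrives at a bound of the form
\begin{equation*}
\|v\|_{H^{-s}(\Omega)} \leq C\Bigl|\log\bigl(\|A^*v\|_{H^{-s}(W)}/\|v\|_{H^{-s+\delta}(\Omega)}\bigr)\Bigr|^{-\nu(\delta)}\|v\|_{H^{-s+\delta}(\Omega)}
\end{equation*}
for some $\nu(\delta)>0$. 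This is the step in which the strengthened regularity $b\in W^{1-s+\delta,\infty}$ and the restriction $\delta \in (0,(2s-1)/2)$ are used, since they ensure that the perturbation lies strictly below the fractional Laplacian in the Vishik-Eskin scale; I expect this to be the main obstacle.

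Finally I would convert this propagation-of-smallness bound into the desired approximation by Tikhonov regularization. Let $f_\eta$ minimize $J_\eta(f) := \|Af-\overline v\|_{H^{s-\delta}_{\overline\Omega}}^2 + \eta\|f\|_{H^s_{\overline W}}^2$; its Euler-Lagrange equation reads $(A^*A+\eta I)f_\eta = A^*\overline v$, giving $A^*(Af_\eta - \overline v) = -\eta f_\eta$ together with the trivial estimate $\|f_\eta\|_{H^s_{\overline W}} \leq \eta^{-1/2}\|\overline v\|_{H^{s-\delta}_{\overline\Omega}}$. Applying the dual unique continuation bound to $g_\eta := Af_\eta - \overline v \in H^{s-\delta}_{\overline\Omega}$ produces a logarithmic estimate $\|g_\eta\|_{H^{s-\delta}_{\overline\Omega}} \leq C|\log\eta|^{-\nu(\delta)}\|\overline v\|_{H^s_{\overline\Omega}}$. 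Choosing $\eta$ so that $|\log\eta|^{-\nu(\delta)} \sim \epsilon$, that is $\eta \sim \exp(-C\epsilon^{-1/\nu(\delta)})$, and setting $f_\epsilon := f_\eta$ and $\mu(\delta) := 1/\nu(\delta)$ yields both desired estimates simultaneously; once the third-paragraph bound is secured, this last step is essentially bookkeeping.
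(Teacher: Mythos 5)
Your proposal follows the paper's overall route: the same compact operator $A\colon H^s_{\overline W}\to H^{s-\delta}_{\overline\Omega}$, $Af=P_{b,c}f-f$; the same identification of $A^*v$ with $-(-\Delta)^s w|_W$ for the dual solution $w$; and the same way of producing the logarithmic unique continuation input, by feeding Lemma~\ref{lem:two_sided_est} and Lemma~\ref{lem:VE} into the Caffarelli--Silvestre propagation-of-smallness machinery from \cite{RS17}. The one genuine difference is the last step: the paper proves a separate implication (Proposition~\ref{prop:equiv}) and constructs the approximant by truncating the singular value decomposition of $A$ at level $\alpha$, taking $R_\alpha\overline v=\sum_{\sigma_j>\alpha}\sigma_j^{-1}(\overline v,\cdot)$; you instead solve the Tikhonov normal equation $(A^*A+\eta I)f_\eta=A^*\overline v$. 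These two regularizations are spectrally equivalent (soft vs.\ hard thresholding at $\alpha\sim\sqrt\eta$), so your route is viable and arguably avoids invoking the singular system explicitly.

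Where you are too quick is in the claim that the last step is ``essentially bookkeeping.'' Applying the quantitative unique continuation inequality directly to $g_\eta:=Af_\eta-\overline v$ only yields a bound on the \emph{weaker} norm $\|g_\eta\|_{H^{s-2\delta}_{\overline\Omega}}$, not on $\|g_\eta\|_{H^{s-\delta}_{\overline\Omega}}$ as you assert. The missing ingredient is the duality trick that powers the paper's Proposition~\ref{prop:equiv}: for the SVD truncation one has the identity $\|\overline v-AR_\alpha\overline v\|_{H^{s-\delta}}^2=(\overline v,r_\alpha)_{H^{s-\delta}}$, and then the Fourier-side Cauchy--Schwarz estimate $(\overline v,r_\alpha)_{H^{s-\delta}(\R^n)}\le\|\overline v\|_{H^s}\|r_\alpha\|_{H^{s-2\delta}}$ is what lets the weaker-norm UCP bound control the $H^{s-\delta}$ error. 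In the Tikhonov version the analogous chain is available --- one has $\|g_\eta\|_{H^{s-\delta}}^2\le(\overline v,-g_\eta)_{H^{s-\delta}}$ because $\eta/(\sigma_j^2+\eta)\le 1$, and then one applies the same mixed Cauchy--Schwarz together with $\|A^*g_\eta\|=\eta\|f_\eta\|\le\frac{1}{2}\sqrt\eta\,\|\overline v\|_{H^{s-\delta}}$ --- but without it your claimed bound on $\|g_\eta\|_{H^{s-\delta}_{\overline\Omega}}$ does not follow. Relatedly, your logarithmic unique continuation inequality is stated in negative Sobolev indices (this is the analogue of the paper's intermediate estimate \eqref{eq:quant_UCP_1}); a double interpolation as in Step~1 of the paper's proof is still needed to bring it to the form \eqref{eq:quant_UCP} in positive Sobolev scales, and you should use the Hilbert-space adjoint of $A$ (not the $L^2$-adjoint you compute) for the normal equation $(A^*A+\eta I)f_\eta=A^*\overline v$ to be meaningful.
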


As in \cite{RS17} the approximation property follows from a \emph{quantitative} unique continuation result:

\begin{prop}
\label{prop:equiv}
For $n\geq 1$, let $\Omega \subset \R^n$ be a bounded open set with a $C^\infty$-smooth boundary and $s\in (\frac{1}{2},1)$. Let $W \Subset \Omega_e$ be open such that $\overline{\Omega} \cap \overline{W} = \emptyset$. Further suppose that $\delta \in (0,\frac{2s-1}{2})$ and that $b\in W^{1-s+\delta,\infty}(\Omega)^n$, $c \in L^{\infty}(\Omega)$. 

Assume that for each $v\in H^{s-\delta}_{\overline{\Omega}}$ it holds that
\begin{align}
\label{eq:quant_UCP}
\|v\|_{H^{s-2\delta}_{\overline{\Omega}}}
\leq \frac{C}{\left| \log\left( C \frac{\|v\|_{H^{s-\delta}_{\overline{\Omega}}}}{\|(-\D)^s w\|_{H^{-s}(\Omega)}} \right)\right|^{\sigma(\delta)}  } \|v\|_{H^{s-\delta}_{\overline{\Omega}}},
\end{align}
where $w\in H^{s}_{\overline{\Omega}}$ is the solution of \eqref{eq:dual}.
Then, for each $\epsilon>0$ and for each $\overline{v}\in H^{s}_{\overline{\Omega}}$ there exists $f_{\epsilon} \in H^{s}_{\overline{W}}$
such that the following approximation estimate holds true
	\begin{align*}
	\|P_{b,c} f_{\epsilon} - f_{\epsilon} - \overline{v}\|_{H^{s-\delta}_{\overline{\Omega}}} \leq \epsilon \|\overline{v}\|_{H^{s}_{\overline{\Omega}}}, \quad \quad \|f_{\epsilon}\|_{H^{s}_{\overline{W}}} \leq C e^{C \epsilon^{-\mu(\delta)}} \|\overline{v}\|_{H^{s-\delta}_{\overline{\Omega}}}.
	\end{align*}
\end{prop}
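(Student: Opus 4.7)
The plan is to recast the Runge approximation as an operator-range problem for a bounded map $A$, identify its adjoint in terms of the dual equation \eqref{eq:dual}, and then convert the quantitative unique continuation hypothesis \eqref{eq:quant_UCP} into a quantitative density statement through a Tikhonov regularization argument in the spirit of \cite{RS17}.

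First I would introduce the bounded linear map
\begin{align*}
A \colon H^s_{\overline W} \to H^{s-\delta}_{\overline\Omega}, \qquad A f := P_{b,c} f - f.
\end{align*}
Since $f$ is supported in $\overline W \subset \Omega_e$, Definition \ref{defi:Poisson} and Remark \ref{rmk on Sobolev} give $P_{b,c} f - f \in \widetilde{H}^s(\Omega) = H^s_{\overline\Omega} \hookrightarrow H^{s-\delta}_{\overline\Omega}$, so $A$ is well defined, with boundedness following from Proposition \ref{prop:well_posed}. The proposition then amounts to: for every $\overline v \in H^s_{\overline\Omega}$ and $\epsilon>0$ there exists $f_\epsilon \in H^s_{\overline W}$ with $\|A f_\epsilon - \overline v\|_{H^{s-\delta}_{\overline\Omega}} \leq \epsilon \|\overline v\|_{H^s_{\overline\Omega}}$ and $\|f_\epsilon\|_{H^s_{\overline W}} \leq C e^{C \epsilon^{-\mu(\delta)}} \|\overline v\|_{H^{s-\delta}_{\overline\Omega}}$. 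Next I would compute $A^\ast$: for $v \in (H^{s-\delta}_{\overline\Omega})^\ast$, let $w \in H^s_{\overline\Omega}$ denote the unique solution of \eqref{eq:dual} with source $v$ (well-posed by Proposition \ref{prop:well_posed} and \eqref{eigenvalue condition}). Mimicking the duality argument from the proof of Lemma \ref{lem:Runge}, for $f \in C^\infty_c(W)$ one obtains $\langle v, A f \rangle = -B_{b,c}(f, w)$. Because $\overline W \cap \overline\Omega = \emptyset$ and the drift and potential contributions in $B_{b,c}(f,w)$ are integrals over $\Omega$ on which $f$ vanishes, this reduces to $B_{b,c}(f,w) = \langle f, (-\Delta)^s w\rangle$, whence $A^\ast v = -((-\Delta)^s w)|_W \in H^{-s}(W)$. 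The hypothesis \eqref{eq:quant_UCP} then reads as a quantitative modulus-of-injectivity estimate for $A^\ast$.

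With this in hand, I would invoke a Tikhonov regularization of the equation $A f = \overline v$. For $\alpha > 0$ let $f_\alpha \in H^s_{\overline W}$ be the unique minimizer of $J_\alpha(f) := \|A f - \overline v\|_{H^{s-\delta}_{\overline\Omega}}^2 + \alpha \|f\|_{H^s_{\overline W}}^2$. Its Euler-Lagrange equation $(A^\ast A + \alpha I) f_\alpha = A^\ast \overline v$ yields the standard bounds
\begin{align*}
\|f_\alpha\|_{H^s_{\overline W}} \leq C\alpha^{-1/2}\|\overline v\|_{H^{s-\delta}_{\overline\Omega}}, \qquad \|A^\ast r_\alpha\|_{H^{-s}(W)} = \alpha \|f_\alpha\|_{H^s_{\overline W}},
\end{align*}
where $r_\alpha := A f_\alpha - \overline v$. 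Applying the rewritten hypothesis \eqref{eq:quant_UCP} to $r_\alpha$, combined with the interpolation $\|r_\alpha\|_{H^{s-\delta}_{\overline\Omega}}^2 \leq C \|r_\alpha\|_{H^{s-2\delta}_{\overline\Omega}} \|r_\alpha\|_{H^s_{\overline\Omega}}$ and the a priori bound $\|r_\alpha\|_{H^s_{\overline\Omega}} \leq C(\alpha^{-1/2}+1)\|\overline v\|_{H^s_{\overline\Omega}}$, yields an estimate of the form $\|r_\alpha\|_{H^{s-\delta}_{\overline\Omega}} \leq C|\log \alpha|^{-\sigma(\delta)/2} \|\overline v\|_{H^s_{\overline\Omega}}$. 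Choosing $\alpha \sim \exp(-C \epsilon^{-2/\sigma(\delta)})$ then provides $f_\epsilon := f_\alpha$ with the claimed exponential norm growth and the exponent $\mu(\delta) = 2/\sigma(\delta)$.

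The main obstacle is the careful bookkeeping across the three Sobolev scales $H^{s-2\delta}_{\overline\Omega}$, $H^{s-\delta}_{\overline\Omega}$ and $H^s_{\overline\Omega}$ when feeding the UCP estimate into the Tikhonov scheme: the UCP estimate controls the weakest scale, the error must be measured in the middle scale, and the strongest scale enters only as an a priori bound whose polynomial blow-up in $\alpha^{-1/2}$ must be absorbed by the exponentially small logarithmic factor. The hypothesis $\delta \in (0, (2s-1)/2)$ ensures all three Sobolev indices remain positive so the interpolation is legitimate. The bookkeeping follows the scheme in \cite{RS17}, now adapted to the drift case through the dual equation \eqref{eq:dual} and the adjoint identification above.
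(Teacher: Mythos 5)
Your overall architecture (the operator $A f = P_{b,c}f - f$, identification of $A^{\ast}$ through the dual equation \eqref{eq:dual}, a regularization scheme in $\alpha$, and a final optimization $\alpha \sim e^{-C\epsilon^{-1/\sigma}}$) is the same as the paper's, which follows \cite[Lemma 8.2]{RS17}; the difference is that the paper regularizes by a hard spectral cutoff of the singular value decomposition of the compact operator $A$, whereas you use Tikhonov regularization. Unfortunately, the way you feed \eqref{eq:quant_UCP} into the Tikhonov scheme contains a genuine gap. You estimate the residual $r_{\alpha}=Af_{\alpha}-\overline{v}$ by interpolation, $\|r_{\alpha}\|^2_{H^{s-\delta}_{\overline{\Omega}}}\leq C\|r_{\alpha}\|_{H^{s-2\delta}_{\overline{\Omega}}}\|r_{\alpha}\|_{H^{s}_{\overline{\Omega}}}$, and then insert the a priori bound $\|r_{\alpha}\|_{H^{s}_{\overline{\Omega}}}\leq C(\alpha^{-1/2}+1)\|\overline{v}\|_{H^{s}_{\overline{\Omega}}}$. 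Even if the logarithm in \eqref{eq:quant_UCP} were bounded below by $c|\log \alpha|$, this chain only yields $\|r_{\alpha}\|_{H^{s-\delta}_{\overline{\Omega}}}\leq C\,\alpha^{-1/2}|\log\alpha|^{-\sigma(\delta)}\|\overline{v}\|_{H^{s}_{\overline{\Omega}}}$, which does not tend to zero as $\alpha\to 0$: a logarithmic gain can never absorb the polynomial loss $\alpha^{-1/2}$, so your claimed intermediate estimate $\|r_{\alpha}\|_{H^{s-\delta}_{\overline{\Omega}}}\leq C|\log\alpha|^{-\sigma(\delta)/2}\|\overline{v}\|_{H^{s}_{\overline{\Omega}}}$ does not follow. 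A second, related problem is that for the Tikhonov residual the quantity inside the logarithm, $\|r_{\alpha}\|_{H^{s-\delta}_{\overline{\Omega}}}/\|A^{\ast}r_{\alpha}\|$, need not be large at all (e.g. if $\overline{v}$ is essentially a singular vector with singular value of order one, the ratio is of order one while $\|A^{\ast}r_{\alpha}\|=\alpha\|f_{\alpha}\|\leq \alpha^{1/2}\|\overline{v}\|$ only controls it against $\|\overline{v}\|$, not against $\|r_{\alpha}\|$); you would need a case distinction (either $\|r_{\alpha}\|$ is already small, or the ratio is at least a negative power of $\alpha$) which your write-up does not make.

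The paper's proof avoids both issues simultaneously. With the singular system $(\sigma_j,w_j,\varphi_j)$ of $A$, the truncated inverse $R_{\alpha}\overline{v}$ produces a residual $r_{\alpha}$ supported on the modes with $\sigma_j\leq\alpha$, so that $\|A^{\ast}r_{\alpha}\|\leq \alpha\|r_{\alpha}\|_{H^{s-\delta}_{\overline{\Omega}}}$ holds \emph{unconditionally} and the logarithm is at least $|\log(C/\alpha)|$ regardless of the size of $r_{\alpha}$. Moreover, instead of interpolating $r_{\alpha}$ against its own $H^{s}$ norm, the paper uses the identity $\|r_{\alpha}\|^2_{H^{s-\delta}_{\overline{\Omega}}}\leq (\overline{v},r_{\alpha})_{H^{s-\delta}(\R^n)}\leq \|\overline{v}\|_{H^{s}_{\overline{\Omega}}}\|r_{\alpha}\|_{H^{s-2\delta}_{\overline{\Omega}}}$, which places the strong norm on the \emph{given} datum $\overline{v}$ rather than on $r_{\alpha}$; this is what eliminates the $\alpha^{-1/2}$ loss. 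If you wish to keep Tikhonov regularization, the analogous fixes are available — one can check that $\|r_{\alpha}\|^2\leq(\overline{v},r_{\alpha})_{H^{s-\delta}}$ also holds for the Tikhonov residual, since $r_{\alpha}=-\alpha(AA^{\ast}+\alpha)^{-1}\overline{v}$ and $\alpha(AA^{\ast}+\alpha)^{-1}$ has spectrum in $[0,1]$, and the ratio issue can be handled by the case distinction indicated above — but as written your argument does not close.
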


This statement is the exact analogue of Lemma 8.2 in \cite{RS17} with the argument for Proposition \ref{prop:equiv} following
verbatim as in the proof of Lemma 8.2 in \cite{RS17}: Indeed, the only property of the equation \eqref{eq:dual} which is used, is its mapping property. By virtue of the regularity assumptions on $b,c$ and the well-posedness results of Proposition \ref{prop:well_posed}, solutions to \eqref{eq:quant_UCP} also enjoy exactly the same regularity and compactness estimates as the ones from \cite{RS17}.

\begin{proof}[Proof of Proposition \ref{prop:equiv}]
We consider the operator
\begin{align*}
A:H^{s}_{\overline{W}} \rightarrow H^{s}_{\overline{\Omega}} \hookrightarrow H^{s-\delta}_{\overline{\Omega}}, \ f \mapsto j(P_{b,c}(f)-f),
\end{align*}
where $j:H^{s}_{\overline{\Omega}} \hookrightarrow H^{s-\delta}_{\overline{\Omega}}$ is a compact embedding.
Thus, $A$ is a compact, injective operator. Here injectivity follows from the strong uniqueness result of Proposition \ref{prop strong unique}. In addition, by (a slight adaptation of) Lemma \ref{lem:Runge}, it has a dense range. Thus, we may apply the spectral theorem for compact operators and obtain sequences $\{\mu_j\}_{j=1}^{\infty}\subset \R_+$ decreasing, and $\{w_j\}_{j=1}^{\infty} \subset H^{s}_{\overline{W}}$ such that $A^{\ast}A w_j = \mu_j w_j$. 

The set $\{w_j\}_{j=1}^{\infty}$ forms an orthonormal basis with respect to the $H^{s}_{\overline{W}}$ scalar product. By the density of the range of $A$, it also follows that the set $\{\varphi_j\}_{j=1}^{\infty}:=\left\{\frac{1}{\sigma_j}A w_j\right\}_{j=1}^{\infty}$ with $\sigma_j := \mu_j^{\frac{1}{2}}$ is an orthonormal basis of $H^{s-\delta}_{\overline{\Omega}}$. As a consequence, $\left\{(\sigma_j, w_j, \varphi_j)\right\}_{j=1}^\infty \subset \R_+\times H^{s}_{\overline{W}}\times H^{s-\delta}_{\overline{\Omega}}$ is the \emph{singular value decomposition} of $A$. By the characterization of $A^{\ast}$, the assumption \eqref{eq:quant_UCP} can be rephrased as the estimate
\begin{align}
\label{eq:quant_UCP_1a}
\|v\|_{H^{s-2\delta}_{\overline{\Omega}}}
\leq \frac{C}{\left| \log\left( C \frac{\|v\|_{H^{s-\delta}_{\overline{\Omega}}}}{\|A^{\ast}v\|_{H^{s}_{\overline{W}}}} \right)\right|^{\sigma(\delta)}} \|v\|_{H^{s-\delta}_{\overline{\Omega}}}.
\end{align}
Using this and the singular value decomposition from above, we deduce the approximation property along the same lines as in \cite[Lemma 8.2]{RS17}: Let $\bar{v}\in H^{s}_{\overline{\Omega}}$. For $\alpha \in (0,1)$, let $r_{\alpha}:= \sum\limits_{\sigma_j\leq \alpha}(\bar{v},w_j)w_j \in H^{s-\delta}_{\overline{\Omega}}$ and $R_{\alpha} \bar{v}:=\sum\limits_{\sigma_j> \alpha}\sigma_j^{-1}(\bar{v},w_j)\varphi_j \in H^{s}_{\overline{W}}$. 

Therefore, on the one hand, we have 
\begin{align}
\label{eq:est_1}
\|R_{\alpha} (\bar{v})\|_{H^{s}_{\overline{W}}} \leq \frac{C}{\alpha}\|\bar{v}\|_{H^{s-\delta}_{\overline{\Omega}}}.
\end{align}
On the other hand,
\begin{align}
\label{eq:est_2}
\begin{split}
&\quad \| \bar{v}-A R_{\alpha}(\bar{v})\|_{H^{s-\delta}_{\overline{\Omega}}}^2 \\
&= \sum\limits_{\sigma_j \leq \alpha}|(\bar{v},w_j)_{H^{s-\delta}_{\overline{\Omega}}}|^2
\leq (\bar{v},r_{\alpha})_{H^{s-\delta}_{\overline{\Omega}}}\\
&=(\bar{v},r_{\alpha})_{H^{s-\delta}(\R^n)}
\leq \|\bar{v}\|_{H^{s}(\R^n)} \|r_{\alpha}\|_{H^{s-2\delta}(\R^n)}
=\|\bar{v}\|_{H^{s}_{\overline{\Omega}}} \|r_{\alpha}\|_{H^{s-2\delta}_{\overline{\Omega}}}\\
&\leq  \|\bar{v}\|_{H^{s}_{\overline{\Omega}}}
 \frac{C}{\left| \log\left( C \frac{\|r_{\alpha}\|_{H^{s-\delta}_{\overline{\Omega}}}}{\|A^{\ast}r_{\alpha}\|_{H^{s}_{\overline{W}}}} \right)\right|^{\sigma(\delta)}} \|r_{\alpha}\|_{H^{s-\delta}_{\overline{\Omega}}}
 \leq  \|\bar{v}\|_{H^{s}_{\overline{\Omega}}}
 \frac{C}{\left|\log\left( C \frac{1}{\alpha} \right)\right|^{\sigma(\delta)}} \|r_{\alpha}\|_{H^{s-\delta}_{\overline{\Omega}}}.
\end{split}
\end{align}
Optimizing \eqref{eq:est_1}, \eqref{eq:est_2} in $\alpha$ then implies the claim.
\end{proof}

As a second main ingredient in the proof of Proposition \ref{prop:approx}, we rely on Theorem 5.1 from \cite{RS17}. This is a \emph{propagation of smallness} estimate from the boundary into the bulk for the Caffarelli-Silvestre extension of a general function. It does not use the specific equation at hand. For completeness, we recall the statement:

\begin{prop}[\cite{RS17}, Proposition 5.1]
\label{prop:51}
For $n\geq 1$, let $\Omega \subset \R^n$ be an open, bounded and smooth domain. Let $W \Subset \Omega_e$ be open, bounded and Lipschitz with $\overline{\Omega}\cap \overline{W} = \emptyset$. Suppose that $s\in (0,1)$ and that $\widetilde{w}$ is a solution to 
\begin{align}
\label{eq:CS_ext}
\begin{split}
\nabla \cdot x_{n+1}^{1-2s}\nabla \widetilde{w} & = 0 \mbox{ in } \R^{n+1}_+,\\
\widetilde{w} & = w \mbox{ on } \R^n \times \{0\},
\end{split}
\end{align}
where $w \in H^{s}(\R^n)$ is a function which vanishes in an open neighbourhood of $W$. Assume further that for some constants $C_1>0$ and $\delta>0$ one has the a priori bounds
\begin{align*}
\left\|x_{n+1}^{1-2s} \p_{n+1} \widetilde{w}\right\|_{H^{-s}(W)}\leq \eta,\\
\left\|x_{n+1}^{\frac{1-2s}{2}} \widetilde{w}\right\|_{L^2(\R^n \times [0,C_1])} + \left\|x_{n+1}^{\frac{1-2s}{2}} \nabla \widetilde{w}\right\|_{L^2(\R^{n+1}_+)}
+ \left\|x_{n+1}^{\frac{1-2s}{2}-\delta} \nabla \widetilde{w}\right\|_{L^2(\R^{n+1}_+)}
 \leq E,
\end{align*}
for some constants $\eta, E$ with $\frac{E}{\eta}>1$. Then, there exist constants $C>1$, $\mu>0$ which depend on $n,s,C_1,\delta, \Omega, W$ such that
\begin{align*}
	\left\|x_{n+1}^{\frac{1-2s}{2}} \widetilde{w}\right\|_{L^2(2\Omega \times [0,1])} + \|x_{n+1}^{\frac{1-2s}{2}} \nabla \widetilde{w}\|_{L^2(2\Omega \times [0,1])}
	\leq C \frac{E}{\left|\log\left( C\frac{E}{\|(-\D)^s w\|_{H^{-s}(W)}} \right)\right| ^{\mu}}.
\end{align*}
Here $2\Omega:= \{x\in \R^n: \ \dist(x, \Omega) \leq \min\{\frac{1}{2} \dist(\overline{W},\overline{\Omega}),2 \}\}$.
\end{prop}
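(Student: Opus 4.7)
\textbf{Proof plan for Proposition \ref{prop:51}.} The statement is a quantitative unique continuation / propagation of smallness result for the Caffarelli--Silvestre extension operator $L := \nabla \cdot x_{n+1}^{1-2s} \nabla$, starting from small Cauchy data on the ``collar'' $W \times \{0\} \subset \partial \R^{n+1}_+$ and propagating into the bulk region $2\Omega \times [0,1]$. Since it is cited verbatim from \cite{RS17}, my plan follows the strategy of that paper; I sketch the three structural ingredients and where the technical weight comes in.

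\emph{Step 1: Reduction to Cauchy data smallness.} The first move is to interpret the hypotheses as smallness of the Cauchy data $(\tilde{w}|_{W\times\{0\}}, x_{n+1}^{1-2s}\partial_{n+1}\tilde{w}|_{W\times\{0\}})$ in the sense that the former vanishes identically (since $w\equiv 0$ on an open neighbourhood of $W$ by hypothesis, so does its trace) and the latter has $H^{-s}(W)$ norm at most $\eta$. After an even reflection across $\{x_{n+1}=0\}$ near $W$, this gives a function that solves $L u = g$ with $u$ vanishing in a region $B \subset \R^{n+1}$ centered on $W\times\{0\}$ and with $g$ supported in the same region and controlled by $\eta$. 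Simultaneously, the global $L^2$-weighted energy of $\tilde w$ and $\nabla \tilde w$ is bounded by $E$, the slightly stronger weighted bound with weight $x_{n+1}^{\frac{1-2s}{2}-\delta}$ supplying the ``a priori regularity'' needed for the interpolation step.

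\emph{Step 2: Carleman estimate and three-balls inequality.} The key analytic tool is a Carleman estimate for the degenerate operator $L$ adapted to the $A_2$-Muckenhoupt weight $x_{n+1}^{1-2s}$, established in \cite{ruland2015unique}. This produces a \emph{three-balls inequality} of the form
\begin{align*}
\|\tilde w\|_{L^2_w(B_{2r}(z))} \leq C \|\tilde w\|_{L^2_w(B_r(z))}^{\alpha}\, \|\tilde w\|_{L^2_w(B_{4r}(z))}^{1-\alpha}
\end{align*}
for centers $z \in \R^{n+1}_+$ and some $\alpha \in (0,1)$ depending on the geometry, where $L^2_w$ is the $x_{n+1}^{1-2s}$-weighted $L^2$ space. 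The fact that $\tilde w$ vanishes in a neighbourhood of $W\times\{0\}$ provides the initial ball on which $\|\tilde w\|_{L^2_w}$ is controlled by $\eta$ up to an auxiliary boundary Caccioppoli argument (which converts boundary Cauchy data smallness into interior $L^2$ smallness on a slightly smaller ball).

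\emph{Step 3: Chain iteration and optimization.} The final step is the standard propagation-of-smallness chaining: one covers a path from a ball in $B \cap \R^{n+1}_+$ to an arbitrary point of $2\Omega \times [0,1]$ by overlapping balls of comparable radius and iterates the three-balls inequality along the chain. Each application compounds smallness with the large global bound $E$, producing after $N$ steps a bound of the form $\eta^{\alpha^N} E^{1-\alpha^N}$. Balancing over the number of chain steps in terms of $\log(E/\eta)$ yields exactly the logarithmic modulus in the statement, and the extra weighted energy bound with parameter $\delta$ is used to absorb the behaviour of $\nabla \tilde w$ near the degenerate boundary (so that the gradient, not only $\tilde w$, inherits the same rate). \emph{The main obstacle} is entirely in Step 2: to make the chaining work up to and along the degenerate boundary $\{x_{n+1}=0\}$, one needs a Carleman estimate with boundary terms that are compatible with the $A_2$ weight and that incorporate the Cauchy data of $\tilde w$; this is the nontrivial content of \cite{ruland2015unique} and is what forces the logarithmic (as opposed to Hölder) rate.
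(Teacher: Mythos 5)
Your plan is, in outline, the strategy of the source: this proposition is quoted verbatim from \cite{RS17} and the paper under review gives no proof at all, remarking only that the argument in \cite[Section 5]{RS17} uses nothing about the equation satisfied by $w$ beyond the fact that $w$ vanishes in a neighbourhood of $W$. So sketching the Carleman-based propagation of smallness (degenerate $A_2$-weighted operator, three-balls inequalities, chaining, optimization against the a priori bound $E$, with the $\delta$-improved weight controlling the region near $\{x_{n+1}=0\}$) is the right frame of reference.

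However, there is a genuine gap in the way you launch the argument. In Step 1 you claim that after even reflection one obtains a solution "vanishing in a region $B\subset \R^{n+1}$ centered on $W\times\{0\}$", and in Step 2 you use "the fact that $\widetilde{w}$ vanishes in a neighbourhood of $W\times\{0\}$" to obtain the initial ball of smallness, up to "an auxiliary boundary Caccioppoli argument". This is not true: only the trace $w=\widetilde{w}|_{x_{n+1}=0}$ vanishes on a neighbourhood of $W$; the Caffarelli--Silvestre extension is nonlocal in character and $\widetilde{w}$ does not vanish on any open subset of $\R^{n+1}_+$ (if it did, unique continuation would force $\widetilde{w}\equiv 0$). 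What one actually has near $W\times\{0\}$ is a reflected solution with zero Dirichlet trace on $W$ and weighted Neumann derivative of size $\eta$ in the \emph{negative-order} norm $H^{-s}(W)$, and converting this Cauchy-data smallness into interior $L^2_w$ smallness on a half-ball is itself a quantitative boundary unique continuation step — a Caccioppoli inequality cannot do it, since it bounds gradients by function values rather than solutions by boundary data. This boundary-to-bulk passage (including the handling of the $H^{-s}$ norm by splitting/interpolation) is a substantial part of \cite[Section 5]{RS17}, not an auxiliary remark. Relatedly, the logarithmic modulus is not "forced by the Carleman estimate": the three-balls inequalities yield Hölder rates, and the logarithm appears in the final optimization, when the chain must reach the degenerate boundary $\{x_{n+1}=0\}$ over $2\Omega$, where no Cauchy data is available; there one splits at a height $h$, controls the thin slab through the $\delta$-weighted a priori bound, lets the chained Hölder exponent degenerate as $h\to 0$, and balances in $h$ (and in $E/\eta$). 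Without repairing the initial-smallness step and making this splitting explicit, the chaining in your Step 3 does not produce the stated estimate.
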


We remark that although Proposition 5.1 in \cite{RS17} is formulated for the Caffarelli-Silvestre extension of the solution to the fractional Schrödinger equation which is studied there (in \cite{RS17} the situation $b=0$ is considered), only the vanishing of $w$ in a neighbourhood of $W$ is used in the argument which leads to Proposition 5.1. We thus do not present the proof of this result, but refer the reader to Section 5 in \cite{RS17}. We will apply it to solutions to \eqref{eq:dual} in the sequel. 

With Propositions \ref{prop:equiv} and \ref{prop:51} at hand, we address the proof of the approximation result of Proposition \ref{prop:approx}:

\begin{proof}[Proof of Proposition \ref{prop:approx}]

By Proposition \ref{prop:equiv}, it suffices to argue that \eqref{eq:quant_UCP} holds true. 

\emph{Step 1: Reduction.}
As in \cite{RS17} the estimate \eqref{eq:quant_UCP} is derived by interpolation from the following quantitative unique continuation result
	\begin{align}
	\label{eq:quant_UCP_1}
	\|v\|_{H^{-s}(\Omega)}
	\leq \frac{C}{ \left| \log\left( \frac{CE }{\|(-\D)^s w\|_{H^{-s}(W)}} \right)\right|^{\sigma}} E,
	\end{align}
	where $E \geq \|v\|_{L^2(\Omega)}$. 
We recall the argument that \eqref{eq:quant_UCP_1} implies \eqref{eq:quant_UCP} (c.f. Step 1 in the proof of Theorem 1.2 in \cite{RS17}): For $\theta \in (0,1)$ such that $0\leq s\theta < \frac{1}{2}$ we first interpolate \eqref{eq:quant_UCP_1} with some $E \geq \|v\|_{L^2(\Omega)}$. This yields 
\begin{align*}
\|v\|_{H^{-\theta s}(\overline{\Omega})}
\leq C^{\theta}\frac{1}{\left| \log\left( \frac{CE }{\|(-\D)^s w\|_{H^{-s}(W)}} \right)\right|^{\sigma(\delta)}} E.
\end{align*}
Using that $H^{-\theta s}(\Omega) = H^{-\theta s}_{\overline{\Omega}}$ for $0<\theta s < \frac{1}{2}$, we then interpolate once more with the trivial bound $\|v\|_{H^{s-\delta}_{\overline{\Omega}}} \leq \|v\|_{H^{s-\delta}_{\overline{\Omega}}}$. Choosing $E \geq \|v\|_{H^{s-\delta}_{\overline{\Omega}}} \geq \|v\|_{L^2(\Omega)}$ then yields
	\begin{align*}
	\|v\|_{H^{-s+2\delta}_{\overline{\Omega}}}
	\leq \frac{C}{ \left|\log\left( \frac{CE }{\|(-\D)^s w\|_{H^{-s}(W)}} \right)\right|^{\widetilde{\sigma}(\delta)} }E.
	\end{align*}
We again emphasize that this reduction does not rely on any properties of the equation \eqref{eq:dual} but only follows from general interpolation arguments.

\emph{Step 2: Proof of \eqref{eq:quant_UCP_1}.}
	Also in the proof of \eqref{eq:quant_UCP_1} only three ingredients involving the solution of \eqref{eq:dual} are exploited:
	\begin{itemize}
		\item[(i)] It is used that $\|w\|_{H^s(\R^n)} \leq C \|v\|_{L^2(\Omega)}$,
		\item[(ii)] that $\|w\|_{H^{s+\widetilde{\delta}}(\R^n)} \leq C \|v\|_{H^{-s+\widetilde{\delta}}(\Omega)} \leq C \|v\|_{L^2(\Omega)}$ for some $\widetilde{\delta}\in (0,1/2)$,
		\item[(iii)] and that $\|v\|_{H^{-s}(\Omega)} \leq C \|w\|_{H^s_{\overline{\Omega}}}$.
	\end{itemize}
	Indeed, choosing a constant $E>0$ such that $\|v\|_{L^2(\Omega)}\leq E$ and assuming that (i), (ii) hold, the properties of the Caffarelli-Silvestre extension yield
	\begin{itemize}
		\item[(i')] $\left\|x_{n+1}^{\frac{1-2s}{2}} \widetilde{w}\right\|_{L^2(\R^n \times [0,C_1])} + \|x_{n+1}^{\frac{1-2s}{2}} \nabla \widetilde{w}\|_{L^2(\R^{n+1}_+)} \leq C E$,
		\item[(ii')] $\left\|x_{n+1}^{\frac{1-2s}{2}-\delta}\widetilde{w}\right\|_{L^2(\R^{n+1}_+)} \leq C E$.
	\end{itemize}
	Here $\widetilde{w}$ denotes the Caffarelli-Silvestre extension of $w$; for the details of the estimates we refer to the proof of Theorem 1.3 in \cite{RS17}. If (i'), (ii') are available and setting $\eta = C\|(-\D)^s w\|_{H^{-s}(W)}$ (so that by the properties of the Caffarelli-Silvestre extension \cite{caffarelli2007extension}, also $\|\lim\limits_{x_{n+1}\rightarrow 0} x_{n+1}^{1-2s}\p_{n+1}\widetilde{w}\|_{H^{-s}(W)} \leq \eta$), Theorem 5.1 in \cite{RS17} is applicable and results in
	\begin{align*}
	\left\|x_{n+1}^{\frac{1-2s}{2}} \widetilde{w}\right\|_{L^2(2\Omega \times [0,1])} + \|x_{n+1}^{\frac{1-2s}{2}} \nabla \widetilde{w}\|_{L^2(2\Omega \times [0,1])}
	\leq C \frac{E}{\left|\log\left( C\frac{E}{\|(-\D)^s w\|_{H^{-s}(W)}} \right)\right|^{\mu}}.
	\end{align*}
	Recalling localized trace estimates (c.f. Lemma 4.4 in \cite{RS17}) then gives
	\begin{align*}
	\|w\|_{H^s_{\overline{\Omega}}}
	\leq C \frac{E}{\left| \log\left( C\frac{E}{\|(-\D)^s w\|_{H^{-s}(W)}} \right) \right|^{\mu}}.
	\end{align*}
	Finally, applying (iii), we can further bound $\|w\|_{H^{s}_{\overline{\Omega}}}$ from below and obtain
	\begin{align*}
	\|v\|_{H^{-s}(\Omega)}
	\leq C \frac{E}{\left| \log\left( C\frac{E}{\|(-\D)^s w\|_{H^{-s}(W)}} \right)\right|^{\mu}}.
	\end{align*}
	Since Lemmas \ref{lem:two_sided_est}, \ref{lem:VE} imply the conditions (i), (ii), (iii), this concludes the proof of Proposition \ref{prop:approx}.
\end{proof}

\subsection{Proof of Theorem \ref{thm:stab}}
\label{sec:stab_proof}

With the approximation result of Proposition \ref{prop:approx} at hand, we present the proof of the stability estimate from Theorem \ref{thm:stab}.

\begin{proof}[Proof of Theorem \ref{thm:stab}]
	As in the uniqueness proof we argue in two steps: 
	
	\emph{Step 1: Stability estimate for $c$.}
	First, we seek to obtain an estimate on  $\|c_1-c_2\|_{H^{-s}(\Omega)}$. Let $\psi_1$ be a smooth function which is compactly supported in $\Omega$ and which satisfies $\psi_1 = 1$ in $\supp(c_1), \supp(c_2), \supp(b_1), \supp(b_2)$. Let further $g\in \widetilde{H}^{s}(\Omega)$ be arbitrary.
Using the quantitative Runge approximation result from Proposition \ref{prop:approx} and considering approximate solutions 
	\begin{align*}
	u^1_j = \psi_1 + r_j^1, \ u^{2,\ast}_j = g + r_j^2, 
	\end{align*}
 with associated exterior data $f_{j}^1, f_{j}^2$, we infer from Alessandrini's identity
	\begin{align}
	\label{eq:stab_1}
	\begin{split}
	((c_1-c_2), g)_{\Omega} 
	& = ((\Lambda_{b_1,c_1}-\Lambda_{b_2,c_2})f_{j}^1,f_{j}^2)_{W_2}
	-((c_1-c_2)r_j^1, g)_{\Omega}  \\
	& \quad - ((c_1-c_2)r_j^1, r_j^2)_{\Omega} - ((b_1-b_2) \cdot \nabla r_{1}^j, u_{j}^{2,\ast})_{\Omega}.
	\end{split}
	\end{align}
	We estimate the terms involving the drift and the potential as follows
	\begin{align}
	\label{eq:stab_2}
	\begin{split}
	|((b_1-b_2)\cdot \nabla r_j^1, u_j^{2,\ast})_{\Omega}|
	&\leq \|\nabla r_j^1\|_{H^{s-1-\delta}(\Omega)} \|(b_1-b_2) u_j^{2,\ast}\|_{\widetilde{H}^{1-s+\delta}(\Omega)}\\
	&\leq C\| r_j^1\|_{H^{s-\delta}(\Omega)} \|b_1-b_2\|_{W^{1-s+\delta,\infty}(\Omega)} 
	\| u_j^{2,\ast}\|_{\widetilde{H}^{1-s+\delta}(\Omega)}\\
	&\leq C M \| r_j^1\|_{H^{s-\delta}(\Omega)} 
	\| u_j^{2,\ast}\|_{\widetilde{H}^{s-\delta}(\Omega)},
	\end{split}
	\end{align}
	where we used that $s\in (\frac{1}{2},1)$ and chose $\delta \in (0,\frac{2s-1}{2})$ sufficiently small, and
	\begin{align}
	\label{eq:stab_3}
	\begin{split}
	|((c_1-c_2) r_j^1, \psi_1)_{\Omega}|
	&\leq \|c_1-c_2\|_{L^{\infty}(\Omega)} \|r_j^1\|_{L^2(\Omega)} \|\psi_1\|_{L^2(\Omega)}\\
	&\leq CM \|r_j^1\|_{\widetilde{H}^{s-\delta}(\Omega)}\|\psi_1\|_{L^2(\Omega)}.
	\end{split}
	\end{align}
	Applying the bounds from Proposition \ref{prop:approx} and combining the bounds from \eqref{eq:stab_2}, \eqref{eq:stab_3} with \eqref{eq:stab_1}, we therefore infer
	\begin{align*}
	|((c_1-c_2), g)_{\Omega}| 
	\leq C(\|\Lambda_{b_1,c_1}-\Lambda_{b_2,c_2}\|_{\ast} e^{C \epsilon^{-\mu}} + \epsilon  M ) \|\psi_1\|_{H^{s}_{\overline{\Omega}}} \|g\|_{H^{s}_{\overline{\Omega}}}.
	\end{align*}
	Optimizing in $\epsilon>0$ by choosing $\epsilon = \left|\log(\|\Lambda_{b_1,c_2}-\Lambda_{b_2,c_2}\|_{\ast})\right|^{-\nu}$ for some $\nu>0$ then implies the estimate 
	\begin{align*}
	\|c_1-c_2\|_{H^{-s}(\Omega)}
	&\leq \sup\limits_{\|g\|_{H^s_{\overline{\Omega}}=1}} \left| ((c_1-c_2),g)_{\Omega}\right|\\
	&\leq \sup\limits_{\|g\|_{H^s_{\overline{\Omega}}=1}} C\left|\log(\|\Lambda_{b_1,c_2}-\Lambda_{b_2,c_2}\|_{\ast})\right|^{-\nu} \|\psi_1\|_{H^s_{\overline{\Omega}}}  \|g\|_{H^s_{\overline{\Omega}}}\\
	&\leq C\left| \log(\|\Lambda_{b_1,c_2}-\Lambda_{b_2,c_2}\|_{\ast})\right|^{-\nu} \|\psi_1\|_{H^s_{\overline{\Omega}}} .
	\end{align*}
	Since $\psi_1$ is fixed, we arrive at the estimate for $\|c_1-c_2\|_{H^{-s}(\Omega)}$.
	
	\emph{Step 2: Stability estimate for $b$.}
	As a preparation for the stability estimate for $b$, we
	next note that by interpolation, for $\widetilde{\delta}>0$ and some $\theta \in (0,1)$, which only depends on $n,s,\widetilde{\delta}$, we have
	\begin{align}
	\label{eq:interpol}
	\begin{split}
	&\quad \|c_1-c_2\|_{L^{\frac{n}{2s}+\widetilde{\delta}}(\Omega)} \\
	&\leq C \|c_1-c_2\|_{L^2(\Omega)}^{\theta} \|c_1-c_2\|_{L^{\infty}(\Omega)}^{1-\theta} \\
	&\leq C \|c_1-c_2\|_{L^2(\Omega)}^{\theta} \|c_1-c_2\|_{W^{1,n+\delta}(\Omega)}^{1-\theta}\\
	&\leq C \|c_1-c_2\|_{H^{-s}(\Omega)}^{\frac{\theta}{2}}  \|c_1-c_2\|_{\widetilde{H}^{s}(\Omega)}^{\frac{\theta}{2}} \|c_1-c_2\|_{W^{1,n+\delta}(\Omega)}^{1-\theta}\\
	&\leq C \left(\left|\log(\|\Lambda_{b_1,c_2}-\Lambda_{b_2,c_2}\|_{\ast})\right|^{-\nu} \|\psi_1\|_{H^s_{\overline{\Omega}}} \right)^{\frac{\theta}{2}}  \|c_1-c_2\|_{\widetilde{H}^{s}(\Omega)}^{\frac{\theta}{2}} \|c_1-c_2\|_{W^{1,n+\delta}(\Omega)}^{1-\theta}.
	\end{split}
	\end{align}
	With this estimate at hand, for $k\in \{1,\dots,n\}$ we again consider approximate solutions
	\begin{align*}
	u_j^1 = \psi_1 x_k + r_j^1, \ u_j^{2,\ast} = g + r_j^2,
	\end{align*}
	with corresponding exterior data $f_{j}^1, f_{j}^{2,\ast}$. Here $\psi_1$ is chosen as above, i.e. in particular such that $\psi_1 = 1$ on the support of the functions $b_{1},b_2, c_1,c_2$. The solutions $u_j^1, u_{j}^{2,\ast}$ are chosen such that the estimates of Proposition \ref{prop:approx} hold.
	
As before, we exploit Alessandrini's identity. Now we have to treat the full term involving $((c_1-c_2) u_{j}^1, u_j^{2,\ast})_{\Omega}$ as an error: For all $k\in\{1,\dots,n\}$
	\begin{align}
	\label{eq:Aless_drift}
	((b_1-b_2)_k, g)_{\Omega} 
	 = &\notag ((\Lambda_{b_1,c_1}-\Lambda_{b_2,c_2})f_{j}^1,f_{j}^2)_{W_2} \\
   &-((c_1-c_2)u_j^1, u_j^{2,\ast})_{\Omega}  
	- ((b_1-b_2) \cdot \nabla r_{1}^j, u_{2}^{j,\ast})_{\Omega}.
	\end{align}
	The terms involving the DN map and the drift term on the right hand side are estimated as in \eqref{eq:stab_2}, \eqref{eq:stab_3}. For the term involving the difference of the potentials, we invoke the interpolation estimate from above; we estimate it as follows
	\begin{align*}
	&|((c_1-c_2) u_{j}^1, u_j^{2,\ast})_{\Omega}| \\
	\leq & \|c_1-c_2\|_{L^{\frac{n}{2s}+\widetilde{\delta}}(\Omega)} \|u_{j}^1\|_{L^{2^{\star}-\widetilde{\delta}}(\Omega)} \|u_{j}^{2,\ast}\|_{L^{2^{\star}-\widetilde{\delta}}(\Omega)}\\
	\leq & C\|c_1-c_2\|_{H^{-s}(\Omega)}^{\frac{\theta}{2}}  \|c_1-c_2\|_{\widetilde{H}^{s}(\Omega)}^{\frac{\theta}{2}} \|c_1-c_2\|_{W^{1,n+\delta}(\Omega)}^{1-\theta} \|u_{j}^1\|_{\widetilde{H}^{s-\delta}(\Omega)} \|u_{j}^{2,\ast}\|_{\widetilde{H}^{s-\delta}(\Omega)}\\
	\leq & C \left|\log(\|\Lambda_{b_1,c_1}-\Lambda_{b_2,c_2}\|_{\ast})\right|^{-\nu \frac{\theta}{2}}  \|c_1-c_2\|_{\widetilde{H}^{s}(\Omega)}^{\frac{\theta}{2}} \|c_1-c_2\|_{W^{1,n+\delta}(\Omega)}^{1-\theta} \|u_{j}^1\|_{\widetilde{H}^{s-\delta}(\Omega)} \|u_{j}^{2,\ast}\|_{\widetilde{H}^{s-\delta}(\Omega)}.
	\end{align*}
	Here $2^{\star}= \frac{2n}{n-2s}$ denotes the corresponding (fractional) Sobolev embedding exponent and $\widetilde{\delta}>0$ is sufficiently small.
	Inserting this into \eqref{eq:Aless_drift} and using the bounds from Proposition \ref{prop:approx}, we infer
	\begin{align*}
	&|((b_1-b_2)_k, g)_{\Omega}|
	\\
	\leq & C (1+M) \left( \|\Lambda_{b_1,c_1}-\Lambda_{b_2,c_2}\|_{\ast} e^{C \epsilon^{-\mu}} + \left|\log(\|\Lambda_{b_1,c_1}-\Lambda_{b_2,c_2}\|_{\ast})\right|^{-\nu \frac{\theta}{2}} + \epsilon \right)
	\|\psi_1 x_k\|_{H^s_{\overline{\Omega}}} \|g\|_{H^s_{\overline{\Omega}}}.
	\end{align*}
	for any $k\in\{1,\dots,n\}$.
	Optimizing once more in $\epsilon$ and taking the supremum over $g\in \widetilde{H}^{s}(\Omega)$ with $\|g\|_{\widetilde{H}^s(\Omega)}=1$ and over $k\in\{1,\dots,n\}$, hence also leads to the desired logarithmic stability estimate for the difference of the drifts.
	This concludes the proof of the stability estimate.
\end{proof}

\section{Reconstruction and finite measurements uniqueness}\label{sec:rmk_reconst}

Last but not least, we present a few results on reconstruction procedures and finite measurement statements for the fractional Calder\'on problem with drift. More precisely, we show that uniqueness for the fractional Calder\'on problem with $C^\infty_c$ drift and potential can be guaranteed from $n+1$ measurements only. 

\subsection{Higher order approximation}
Before addressing the finite measurement results, we recall the higher order Runge approximation property and present the proof of Theorem \ref{thm: Runge approximation} (b).
The structure of the proof for the higher order Runge approximation is similar as the arguments presented in Section \ref{sec:3}. However, since we seek to approximate solutions in high regularity function spaces, by using a duality argument, we need to consider the corresponding Dirichlet problem in Sobolev spaces of negative orders.
The argument for this follows along the same lines as the proofs in \cite[Section 7]{ghosh2016calder}, which in the sequel we recall for self-containedness for the fractional Schr\"odinger equation with drift. 

Let us consider the fractional Laplacian $(-\Delta)^s$ with $s \in (\frac{1}{2},1)$. Let $\Omega\subset \R^n$ be a bounded domain with a $C^\infty$-smooth boundary, $b\in C^\infty_c(\Omega)^n$, $c\in C^\infty_c(\Omega)$ with $\supp(b),\supp(c)\Subset \Omega$ satisfy \eqref{eigenvalue condition}. Here we impose the described compact support as well as the high regularity conditions for $b$ and $c$ in order to satisfy the assumptions from the theory which is presented in \cite{grubb2015fractional}. In the sequel, we consider the function space
$$
\mathcal{E}^s(\overline{\Omega}):=e_\Omega d(x)^s C^\infty(\overline{\Omega}),
$$
where $e_\Omega$ denotes extension by zero from $\Omega$ to $\R^n$, and $d=d(x)$ is a $C^\infty$ function in $\overline{\Omega}$ with $d(x)>0$ for $x\in \Omega$ and $d(x)=\dist(x,\partial \Omega)$ near $\partial \Omega$.  

Further for $\mu> s- \frac{1}{2}$ we work in the Banach space $H^{s(\mu)}(\overline{\Omega})$ which is the space which is introduced in \cite{grubb2015fractional} as the Banach space tailored for solutions $u$ solving the problem 
$$
r_\Omega ((-\Delta)^s + b\cdot \nabla +c)u \in H^{\mu-2s}(\Omega) \text{ with }u=0 \text{ in }\Omega_e,
$$
where $r_\Omega$ is the restriction map from $\R^n$ to $\Omega$ such that $r_\Omega u=u|_\Omega$.

In order to deduce the desired higher order approximation property, we recall the following result from \cite{grubb2015fractional}, which was also used in \cite[Lemma 7.1]{ghosh2016calder} for deducing higher order approximation for the fractional Schr\"odinger equation.

\begin{prop}[Lemma 7.1 in \cite{ghosh2016calder}]
\label{prop:higher order tools}
	For $\mu>s-\frac{1}{2}$ and a smooth bounded domain $\Omega \subset \R^n$, there exists a Banach space $H^{s(\mu)}(\overline{\Omega})$ with the following properties:
	\begin{itemize}
		\item[(a)] $H^{s(\mu)}(\overline{\Omega})\subset H^{s-\frac{1}{2}}_{\overline{\Omega}}$ with a continuous inclusion;
		
		\item[(b)] $H^{s(\mu)}(\overline{\Omega})=H^{\mu}_{\overline{\Omega}}$ for $\mu\in (s-\frac{1}{2},s+\frac{1}{2})$;
		
		\item[(c)] $r_\Omega((-\Delta)^{s}+b\cdot \nabla +c)$ is a homeomorphism from $H^{s(\mu)}(\overline{\Omega})$ onto $H^{\mu-2s}(\Omega)$;
		
		\item[(d)] $H^{\mu}_{\overline{\Omega}}\subset H^{s(\mu)}(\overline{\Omega})\subset H^{\mu}_{loc}(\Omega)$ with continuous inclusions, or the multiplication by any smooth cut-off $\chi\in C^\infty_c(\Omega)$ is bounded from $H^{s(\mu)}(\overline{\Omega})$ to $H^{\mu}(\Omega)$;
		
		\item[(e)] $\mathcal{E}^s (\overline{\Omega})=\cap_{\mu>s-\frac{1}{2}}H^{s(\mu)}(\overline{\Omega})$ and the set $\mathcal{E}^s (\overline{\Omega})$ is dense in $H^{s(\mu)}(\overline{\Omega})$.
	\end{itemize}
\end{prop}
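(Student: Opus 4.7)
The family of spaces $H^{s(\mu)}(\overline{\Omega})$ together with properties (a), (b), (d), (e), and the mapping property (c) for the \emph{unperturbed} operator $r_{\Omega}(-\Delta)^{s}\colon H^{s(\mu)}(\overline{\Omega})\to H^{\mu-2s}(\Omega)$, are the content of Grubb's $a$-transmission theory from \cite{grubb2015fractional}, which I would cite directly; no new construction of function spaces is needed. The only genuinely new ingredient is to promote the mapping property (c) to the perturbed operator $r_{\Omega}((-\Delta)^{s}+b\cdot\nabla+c)$. The strategy is to view the drift and potential as a \emph{compact} perturbation of the isomorphism $r_{\Omega}(-\Delta)^{s}$, deduce Fredholmness of index zero from Riesz--Schauder, and obtain injectivity by bootstrapping kernel elements into $\mathcal{E}^{s}(\overline{\Omega})\subset H^{s}(\R^{n})$ so as to be able to invoke the eigenvalue condition \eqref{eigenvalue condition}.

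\textbf{Compactness of the perturbation.} Fix $\chi\in C_{c}^{\infty}(\Omega)$ with $\chi\equiv 1$ on $\supp(b)\cup\supp(c)\Subset \Omega$. By property (d), multiplication by $\chi$ is bounded $H^{s(\mu)}(\overline{\Omega})\to H^{\mu}(\Omega)$, and since $\chi u$ has compact support in $\Omega$, its extension by zero lies in $H^{\mu}(\R^{n})$. Because $\nabla\chi$ vanishes on $\supp(b)$ and $b,c$ vanish outside $\supp(\chi)$, one has the pointwise identities $b\cdot\nabla u=b\cdot\nabla(\chi u)$ and $cu=c(\chi u)$; smooth compactly supported functions are bounded multipliers on every $H^{\mu}(\R^{n})$, so
\begin{align*}
\|r_{\Omega}(b\cdot\nabla u+cu)\|_{H^{\mu-1}(\Omega)}\leq C_{b,c}\,\|u\|_{H^{s(\mu)}(\overline{\Omega})}.
\end{align*}
Since $s>\tfrac{1}{2}$ we have $\mu-1>\mu-2s$, so Rellich's theorem on the bounded domain $\Omega$ yields that $K\colon u\mapsto r_{\Omega}(b\cdot\nabla u+cu)$ is a \emph{compact} operator $H^{s(\mu)}(\overline{\Omega})\to H^{\mu-2s}(\Omega)$.

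\textbf{Fredholm alternative and bootstrap.} Writing $r_{\Omega}((-\Delta)^{s}+b\cdot\nabla+c)=r_{\Omega}(-\Delta)^{s}+K$ as an isomorphism plus a compact operator makes it Fredholm of index zero, so the homeomorphism statement in (c) reduces to injectivity. Suppose $u\in H^{s(\mu_{0})}(\overline{\Omega})$ lies in the kernel. Then
\begin{align*}
r_{\Omega}(-\Delta)^{s}u=-Ku\in H^{\mu_{0}-1}(\Omega)=H^{\mu_{1}-2s}(\Omega),\qquad \mu_{1}:=\mu_{0}+(2s-1)>\mu_{0},
\end{align*}
so by Grubb's mapping property for the pure fractional Laplacian, $u\in H^{s(\mu_{1})}(\overline{\Omega})$. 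Iterating finitely many times raises the regularity index past any threshold, hence $u\in\bigcap_{\mu>s-1/2}H^{s(\mu)}(\overline{\Omega})=\mathcal{E}^{s}(\overline{\Omega})\subset H^{s}(\R^{n})$; the eigenvalue condition \eqref{eigenvalue condition} then forces $u\equiv 0$, closing the Fredholm argument. The most delicate point is precisely this bootstrap: for $\mu$ close to $s-\tfrac{1}{2}$ the a priori inclusion (a) only gives $H^{s-1/2}$-regularity, which is insufficient to invoke \eqref{eigenvalue condition}, and the iterated use of Grubb's mapping property for the pure operator is what lifts the kernel into a space where the eigenvalue hypothesis actually bites.
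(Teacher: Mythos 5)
The paper does not actually prove this proposition: it states it as a recollection of Grubb's $\mu$-transmission theory and of \cite[Lemma~7.1]{ghosh2016calder}, and simply writes ``For the proof of this result we refer to \cite{Gru14,grubb2015fractional} and \cite{ghosh2016calder}.'' Your proposal therefore supplies an argument the paper leaves to citation, and it does so along a correct and somewhat more elementary route than the one the paper implicitly intends (which would be to apply Grubb's theory \emph{directly} to $(-\Delta)^s+b\cdot\nabla+c$, an operator of order $2s$ satisfying the $s$-transmission condition because $b,c$ are smooth and supported away from $\partial\Omega$). Your identification of the only new content as the isomorphism in (c) for the perturbed operator is exactly right, and the compactness argument is sound: the identities $b\cdot\nabla u = b\cdot\nabla(\chi u)$ and $cu=c(\chi u)$, combined with property (d) and the fact that multiplication by smooth compactly supported functions is bounded on every $H^\sigma(\R^n)$, give $K\colon H^{s(\mu)}(\overline{\Omega})\to H^{\mu-1}(\Omega)$, and since $2s>1$ the target embeds compactly into $H^{\mu-2s}(\Omega)$, yielding Fredholmness of index zero. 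The bootstrap for the kernel is the genuinely delicate step, and you flag it correctly: for $\mu$ close to $s-\frac{1}{2}$ a kernel element a priori only sits in $H^{s-1/2}_{\overline{\Omega}}$, where \eqref{eigenvalue condition} does not directly apply, and the finitely-many-step gain of $2s-1$ derivatives lifts it into $\mathcal{E}^s(\overline{\Omega})\subset H^{s}_{\overline{\Omega}}$. Two points worth making explicit: the bootstrap tacitly uses the nesting $H^{s(\mu_1)}(\overline{\Omega})\subset H^{s(\mu_0)}(\overline{\Omega})$ for $\mu_1>\mu_0$ (which follows from the compatibility of the isomorphisms $r_\Omega(-\Delta)^s$ across the scale), and the proposition as stated tacitly requires \eqref{eigenvalue condition}, which is indeed a standing hypothesis in that section of the paper. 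What your route buys over a bare citation is that it makes transparent precisely where and why \eqref{eigenvalue condition} is needed, at the modest cost of repeating a Fredholm perturbation step that Grubb's general theorem subsumes.
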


For the proof of this result we refer to \cite{Gru14,grubb2015fractional} and \cite{ghosh2016calder}. We remark that equipped with the topology induced by $\{\|\cdot\|_{H^{s(k)}}\}_{k=1}^\infty$, the space $\mathcal{E}^s (\overline{\Omega})$ is a Fr\'echet space.

Building on these properties of the spaces $H^{s(\mu)}(\overline{\Omega})$, following the argument of \cite{ghosh2016calder}, we prove a higher order approximation property for solutions to \eqref{eq:main} in $\mathcal{E}^s (\overline{\Omega})$. The following result was proved by \cite[Lemma 7.2 ]{ghosh2016calder} for the case $b=0$.

\begin{lem}
\label{lem:Higher Runge}
	Let $\Omega \subset \R^n$ be a bounded domain with a $C^\infty$-smooth boundary, and $\frac{1}{2}<s <1$. Let $W\subset \Omega_e$ be an open set, and let $b\in C^\infty_c(\Omega)^n$, $c\in C^\infty_c(\Omega)$ with $\supp(b),\supp(c)\Subset \Omega$ be such that \eqref{eigenvalue condition} holds. Let $P_{b,c}$ be the Poisson operator given by \eqref{Poisson operators} and 
	$$
	\mathcal{D}:=\{e_\Omega(r_\Omega P_{b,c}f): f\in C^\infty_c(W)\}.
	$$
	Then the set $\mathcal{D}$ is dense in the Fr\'echet space $\mathcal{E}^s (\overline{\Omega})$ with the topology induced by $\{\|\cdot\|_{H^{s(k)}}\}_{k=1}^\infty$.
\end{lem}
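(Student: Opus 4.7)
The plan is to follow the duality strategy of \cite[Lemma 7.2]{ghosh2016calder} and adapt it to the presence of the drift term. By the Hahn--Banach theorem applied in the Fréchet space $\mathcal{E}^s(\overline{\Omega})$, it suffices to show that any continuous linear functional $F$ on $\mathcal{E}^s(\overline{\Omega})$ which vanishes on $\mathcal{D}$ must vanish identically. Since the Fréchet topology is induced by the seminorms $\{\|\cdot\|_{H^{s(k)}}\}_{k\in \N}$, such an $F$ extends to a bounded linear functional on $H^{s(k)}(\overline{\Omega})$ for some sufficiently large $k > s-\frac{1}{2}$.

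By Proposition \ref{prop:higher order tools}(c), the operator $P := r_\Omega((-\D)^s + b\cdot \nabla + c):H^{s(k)}(\overline{\Omega}) \to H^{k-2s}(\Omega)$ is a topological isomorphism. Composing $F$ with $P^{-1}$ and invoking the duality $(H^{k-2s}(\Omega))^{\ast} = \widetilde{H}^{2s-k}(\Omega)$, we obtain $\varphi \in \widetilde{H}^{2s-k}(\Omega)$ (viewed as a distribution on $\R^n$ supported in $\overline{\Omega}$) such that $F(v) = \langle \varphi, Pv\rangle$ for every $v \in H^{s(k)}(\overline{\Omega})$.

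For any test datum $f \in C_c^\infty(W)$, set $u_f := P_{b,c}f$ and observe that $e_\Omega r_\Omega u_f = u_f - f$, since $u_f - f \in \widetilde{H}^s(\Omega)$. Because $\supp(b), \supp(c) \Subset \Omega$ while $f$ vanishes on $\Omega$, we have $b\cdot \nabla f = cf = 0$ on $\Omega$, so $Pf = r_\Omega(-\D)^s f$, which is smooth on $\overline{\Omega}$ as $W \cap \overline{\Omega} = \emptyset$. In particular, $u_f - f = -P^{-1}(Pf) \in H^{s(k)}(\overline{\Omega})$ (and, via property (e), actually lies in $\mathcal{E}^s(\overline{\Omega})$), so $F$ can legitimately be evaluated there. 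Using $P(u_f - f) = -Pf$, the vanishing assumption gives
\[
0 = F(u_f - f) = -\langle \varphi, r_\Omega(-\D)^s f\rangle = -\langle (-\D)^s\varphi, f\rangle_{\R^n},
\]
where the last equality uses self-adjointness of the fractional Laplacian together with $\supp(\varphi)\subset \overline{\Omega}$. Since $f\in C_c^\infty(W)$ is arbitrary and $\varphi|_W = 0$, the distribution $\varphi$ satisfies $\varphi = (-\D)^s\varphi = 0$ in $W$, so Proposition \ref{prop strong unique} forces $\varphi\equiv 0$; consequently $F\equiv 0$, and density is proved.

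The main obstacle one might anticipate --- the non-self-adjointness of $(-\D)^s + b\cdot \nabla + c$, which in Section \ref{sec:4} forced us to work with the adjoint equation --- is neutralized here by the support assumption $\supp(b), \supp(c)\Subset \Omega$: the drift and potential annihilate $f$ pointwise on $\Omega$, reducing the condition on $\varphi$ to one for the pure fractional Laplacian, which is exactly the setting of Proposition \ref{prop strong unique}. The only remaining technical checkpoint, namely that $u_f - f \in H^{s(k)}(\overline{\Omega})$ for every $k$ so that the Fréchet duality argument in Step~1 is justified, follows immediately from Proposition \ref{prop:higher order tools}(c) combined with the $C^\infty$-smoothness of $(-\D)^s f$ on $\overline{\Omega}$.
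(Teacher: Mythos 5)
Your proof is correct and follows the same duality strategy as the paper's argument (Hahn--Banach, Grubb's isomorphism $\mathcal{T}=r_\Omega((-\Delta)^s+b\cdot\nabla+c)$, strong UCP from Proposition \ref{prop strong unique}). The only cosmetic difference is that the paper passes through an approximating sequence $v_k\in C_c^\infty(\Omega)$ to justify moving the operator onto $\varphi$, whereas you observe directly that $\mathcal{T}f=r_\Omega(-\Delta)^sf$ (since $f$ vanishes on $\overline{\Omega}$) and invoke self-adjointness of $(-\Delta)^s$; both routes are equally valid, and note that for this particular cancellation it is really the support condition on $f$, not on $b,c$, that does the work.
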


\begin{proof}
We follow the argument of \cite[Lemma 7.2]{ghosh2016calder}.
	First, notice that for any $f\in C^\infty_c(W)$, by the definition of the Poisson operator \eqref{Poisson operators}, one has $P_{b,c}f=f+v$, where $v \in \widetilde H^s(\Omega)$ satisfies 
$$
r_\Omega((-\Delta)^s+b\cdot \nabla +c)v\in C^\infty(\overline{\Omega}).
$$
By Proposition \ref{prop:higher order tools}, we have $v\in \mathcal{E}^s(\overline{\Omega})$, which implies that $\mathcal{D}\subset \mathcal{E}^s (\overline{\Omega})$. 
Next, let $\mathcal{L}$ be a continuous linear functional defined on $\mathcal{E}^s (\overline{\Omega})$ satisfying
	$$ 
	\mathcal{L}(e_\Omega (r_\Omega P_{b,c}f))=0, \text{ for all }f\in C^\infty_c(W).		
	$$
	By the Hahn-Banach theorem, it suffices to show that $\mathcal{L}\equiv0$. By using the definition of the topology of the Fr\'echet space $\mathcal{E}^s (\overline{\Omega})$, one can find an integer $\ell$ so that 
	$$
	|\mathcal{L}(u)|\leq C\sum_{m=1}^{\ell} \|u\|_{H^{s(m)}(\overline{\Omega})}\leq C\|u\|_{H^{s(\ell)}(\overline{\Omega})}, \text{ for }u\in \mathcal{E}^{s}(\overline{\Omega}),
	$$
for some constant $C>0$ which is independent of $u$.
By virtue of Proposition \ref{prop:higher order tools} (e), $\mathcal{E}^{s}(\overline{\Omega})$ is dense in $H^{s(\ell)}(\overline{\Omega})$. Thus, $\mathcal{L}$ has a unique bounded extension $\widetilde{\mathcal{L}}\in (H^{s(\ell)}(\overline{\Omega}))^{\ast}$. 

Let us consider the same homeomorphism in Proposition \ref{prop:higher order tools} (c), 
	$$
	\mathcal{T}=r_\Omega((-\Delta)^s+b\cdot \nabla +c):H^{s(\ell)}(\overline{\Omega})\to H^{\ell-2s}(\Omega).
	$$
	The adjoint of $\mathcal T$ is a bounded map between the dual Banach spaces with 
	$$
	\mathcal{T}^{\ast}:(H^{\ell-2s}(\Omega))^{\ast}\to (H^{s(\ell)}(\overline{\Omega}))^*.
	$$
	Note that the adjoint map $\mathcal{T}^\ast$ is also homeomorphism with the inverse $(\mathcal{T}^{-1})^{\ast}$. Moreover, by Remark \ref{rmk on Sobolev}, we have $(H^{\ell-2s}(\Omega))^\ast =H^{-\ell+2s}_{\overline{\Omega}}$ such that 
	$$
	\mathcal{T}^{\ast}v(w)=(v,\mathcal{T}w)_{H^{-\ell+2s}_{\overline{\Omega}} \times H^{\ell-2s}(\Omega)}, \text{ for }v\in H^{-\ell+2s}_{\overline{\Omega}} \text{ and }w\in H^{s(\ell)}(\overline{\Omega}).
	$$
	
	Let $v\in H^{-\ell+2s}_{\overline{\Omega}}$ be the unique function satisfying $\mathcal{T}^{\ast}v=\widetilde{\mathcal{L}}$ and choose a sequence $\{v_k\}_{k\in \mathbb{N}}\subset C^\infty_c(\Omega)$ with $v_k \to v$ in $H^{-\ell+2s}(\Omega)$ as $k\to \infty$. Now, let $f\in C^\infty_c(W)$, recalling that $e_\Omega (r_\Omega P_{b,c}f)=P_{b,c}f-f$, then we have 
	\begin{align}\label{high runge calculation 1}
	0=&\notag\mathcal{L}(e_\Omega(r_\Omega P_{b,c}f))=\widetilde{\mathcal{L}}(P_{b,c}f-f)=\mathcal{T}^\ast v(P_{b,c}f-f)\\ 
	= &\notag (v,\mathcal{T}(P_{b,c}f-f))	= -(v,\mathcal{T}f)=\lim_{k\to \infty}(v_k,((-\Delta)^s +b\cdot \nabla +c)f)\\
	=&-\lim_{k\to \infty}((-\Delta)^s v_k -\nabla \cdot (bv_k)+cv_k, f),
	\end{align}
	where we have utilized that $\mathcal{T}P_{b,c}f=0$ and $v_k\in C^\infty_c(\Omega)$. Finally, since $f\in C^\infty_c(W)$ with $\overline W\cap \overline\Omega=\emptyset$, then the last equation of \eqref{high runge calculation 1} reads
	$$
	((-\Delta)^s v,f)=\lim_{k\to\infty}((-\Delta)^s v_k,f)=0, \text{ for }f\in C^\infty_c(W).
	$$
	Thus, we obtain that $v\in H^{-\ell+2s}(\R^n)$ satisfies 
	$$
	v|_W=(-\Delta)^s v|_W=0,
	$$
	and the strong uniqueness (Proposition \ref{prop strong unique}) implies that $v\equiv 0 $ in $\R^n$. Therefore, we obtain $\widetilde{\mathcal{L}}=0$ and hence $\mathcal{L}=0$, which completes the proof.
\end{proof}

Now, we are ready to prove the higher regularity Runge approximation property.

\begin{proof}[Proof of Theorem \ref{thm: Runge approximation} (b)]
As $\Omega \Subset \Omega_1$ with $\mathrm{int}(\Omega_1 \setminus \overline{\Omega})\neq \emptyset$, it is possible to find a small ball $W$ with $\overline{W}\subset \Omega_1\setminus \overline{\Omega}$. Let $g\in C^\infty(\overline{\Omega})$ and 
	$h:=e_{\Omega} d(x)^{s} g \in \mathcal{E}^{s}(\overline{\Omega})$, then Lemma \ref{lem:Higher Runge} shows that one can find a sequence of solutions $\{u_j\} \subset H^s(\R^n)$ satisfying 
	$$
	((-\Delta)^s+b\cdot \nabla +c)u_j=0 \text{ in }\Omega \text{ with }\supp(u_j)\subset \overline{\Omega_1},
	$$
	so that $e_\Omega r_\Omega u_j \in \mathcal{E}^s(\overline{\Omega})$ and 
	$$
	e_\Omega r_\Omega u_j \to h \text{ in }\mathcal{E}^{s}(\overline{\Omega}) \text{ as }j\to \infty.
	$$
	The higher order approximation will hold if we can show that 
	$$
	\mathcal{M}:C^\infty(\Omega) \to \mathcal{E}^{s}(\overline{\Omega}) \text{ with }\mathcal{M}g=e_\Omega d(x)^s g
	$$
	is a homeomorphism, as it is then possible to apply $\mathcal{M}^{-1}=d(x)^{-s}r_\Omega$. This then gives  
	$$
	d(x)^{-s}r_\Omega u_j \to g \text{ in }C^\infty(\overline{\Omega}).
	$$
	Note that the map $\mathcal{M}$ is a bijective linear map between Fr\'echet spaces and has a closed graph, i.e., if $g_j \to g$ in $C^\infty$ and $\mathcal{M}g_j\to h$ in $\mathcal{E}^s$, then also $\mathcal{M}g_j \to \mathcal{M}g$ in $L^\infty$. Then by the uniqueness of the limit, one obtains that $\mathcal{M}g=h$ as distributional limits. Hence, $\mathcal{M}$ is a homeomorphism by the closed graph and the open mapping theorems. This finishes the proof.
\end{proof}

\subsection{Finite measurements reconstruction without openness}
\label{subsec:Finite measurements reconstruction}
In this section, we discuss a first result towards the proof of Theorem \ref{prop:finite_meas} by using higher order Runge approximation (Theorem \ref{thm: Runge approximation} (b)). However, before proving the full result of Theorem \ref{prop:finite_meas}, we prove a weaker (but technically considerably easier) result, which still proves finite measurement reconstruction but only asserts that the set of measurement data contains a non-empty open set (a priori this argument does not prove the \emph{density} of the set of good data). The technically more involved statement on the openness and density of the set of good data will be proved in the subsequent sections.

\begin{proof}[Proof of Theorem \ref{prop:finite_meas} without the density result]
	We show that for any drift $b \in C^{\infty}_c(\Omega)^n$ and any potential $c\in C^{\infty}_c(\Omega)$ with $\supp(b),\supp(c)\Subset \Omega$, there exist exterior Dirichlet data $f_1,\dots,f_{n+1}$ such that the $b$ and $c$ can be uniquely reconstructed from the knowledge of $f_1,\dots,f_{n+1}$ and $\Lambda_{b,c}(f_1),\dots, \Lambda_{b,c}(f_{n+1})$.
	
	By Runge approximation in $C^{k}$ spaces (see Theorem \ref{thm: Runge approximation} (b)), we have that for any $g\in C^{\infty}(\overline{\Omega})$ there exists a sequence of solutions $\{u_j\}_{j \in \N}$ to the fractional Schrödinger equation \eqref{eq:main} with drift (and compactly supported coefficients) such that for any $k \in \N$
	\begin{align*}
	\|g - d^{-s} u_j \|_{C^k(\Omega)} \rightarrow 0 \text{ as }j\to \infty,
	\end{align*}
	where $d(x)= \dist(x,\partial \Omega)$ if $x\in \Omega$ is sufficiently close to the boundary of $\Omega$ and $d(x)$ is extended to a positive function smoothly into the interior of $\Omega$. Next, we choose $n+1$ smooth functions $g_1,\dots,g_{n+1}$ defined in $\Omega$ with the property that
	\begin{align}
	\label{eq:h}
	h(g_1,g_2,\hdots,g_{n+1})(x) := \det
	\begin{pmatrix}
	\p_1 g_1  & \hdots & \p_n g_1 & g_1 \\
	\vdots & \vdots & \vdots & \vdots\\
	\p_1 g_{n} & \hdots & \p_n g_{n} & g_{n}\\
	\p_1 g_{n+1} & \hdots & \p_n g_{n+1} & g_{n+1}
	\end{pmatrix} (x)\neq 0.
	\end{align}
	An example for this would be the functions $g_j= x_j$ for $j\in\{1,\dots,n\}$ and $g_{n+1}=1$.
	We further set $\widetilde{g}_l= d^{-s} (\chi g_l)$, where $\chi \in C_c^\infty(\Omega)$ and $\chi = 1$ on $K$, for $l\in\{1,\dots,n+1\}$ and apply Theorem \ref{thm: Runge approximation} (b). As a consequence, for each $l\in \{1,\dots,n+1\}$, and in any compact subset $K \Subset \Omega$ there exists a sequence of solutions $\{u_{j,K}^l\}_{j\in \N}$ such that for any $k\in \N$
	\begin{align*}
	\|d^{-s}( g_l-u_{j,K}^l)\|_{C^k(K)} \rightarrow 0 \mbox{ as } j \rightarrow \infty.
	\end{align*}
As $K \subset \Omega$ and as $d(x)>0$ in $K$, we then also have
	\begin{align*}
	\| g_l-u_{j,K}^l\|_{C^k(K)} \rightarrow 0 \mbox{ as } j \rightarrow \infty.
	\end{align*}
	
	Hence, choosing $j\geq j_0$ large enough and $K$ such that $\supp(b)\cup \supp(c)\Subset K$, we obtain that 
	\begin{align}\label{eq:non-zero}
	h( u_{j,K}^1, \hdots, u_{j,K}^{n+1})(x)= \det
	\begin{pmatrix}
	\p_1 u_{j,K}^1  & \hdots & \p_n u_{j,K}^1 & u_{j,K}^1 \\
	\vdots & \vdots & \vdots & \vdots\\
	\p_1 u_{j,K}^n & \hdots & \p_n u_{j,K}^n & u_{j,K}^n\\
	\p_1 u_{j,K}^{n+1} & \hdots & \p_n u_{j,K}^{n+1} & u_{j,K}^{n+1}
	\end{pmatrix} (x)\neq 0.
	\end{align}
	As a consequence, for these values of $u_{j,K}^l$ the (linear) system for $b_1,\dots,b_n$ and $c$
	\begin{align}\label{eq:reconstruction}
	\begin{pmatrix}
	\p_1 u_{j,K}^1  & \hdots & \p_n u_{j,K}^1 & u_{j,K}^1 \\
	\vdots & \vdots & \vdots & \vdots\\
	\p_1 u_{j,K}^n & \hdots & \p_n u_{j,K}^n & u_{j,K}^n\\
	\p_1 u_{j,K}^{n+1} & \hdots & \p_n u_{j,K}^{n+1} & u_{j,K}^{n+1}
	\end{pmatrix}
	\begin{pmatrix}
	b_1 \\
	\vdots \\
	b_n\\
	c 
	\end{pmatrix}
	= -\begin{pmatrix}
	(-\D)^s u_{j,K}^1\\
	\vdots \\
	(-\D)^s u_{j,K}^n \\
	(-\D)^s u_{j,K}^{n+1}
	\end{pmatrix}
	\end{align}
	is solvable (since the $(n+1)\times (n+1)$ matrix in the left hand side of \eqref{eq:reconstruction} is invertible). Thus, from the knowledge of $u_{j,K}^l$, $l\in\{1,\dots,n+1\}$ for some $j\geq j_0$, it is possible to uniquely recover the drift and the potential simultaneously.
	
	As by the global (nonlocal) unique continuation arguments in \cite{GRSU18} (c.f. also Proposition \ref{prop strong unique} from above) it is possible to recover $u_{j,K}^{l}$ given the measurements $f_{j,K}^l$ and $\Lambda_{b,c}(f_{j,K}^l)$ we infer the finite measurement recovery statement.
	
	Finally, in order to infer the openness of the set of possible exterior data, we note that for any $\epsilon>0$ there exists $\delta>0$ such that for any $f=(f^1,\dots, f^{n+1})\in C_c^{\infty}(W)^{n+1}$ with
	\begin{align*}
	\|f-f_{j,K}\|_{C^{k}_{c}(W)} < \delta , \ k \in \N,
	\end{align*}
	we have by boundedness of the mapping $C_c^k(W)^{n+1} \ni f \mapsto u \in C^{k}(K)$ 
	\begin{align*}
	\|u-u_{j,K}\|_{C^k(K)} < \epsilon.
	\end{align*}
	Here $f_{j,K}:=(f_{j,K}^1,\dots,f_{j,K}^{n+1})\in C_c^{\infty}(W)^{n+1}$ are the exterior data from above, $u=(u^1,\dots,u^{n+1})$ are the solutions to \eqref{eq:main} corresponding to the data $f$ and $u_{j,K}:=(u_{j,K}^1,\dots,u_{j,K}^{n+1})$ are the solutions to \eqref{eq:main} corresponding to the data $f_{j,K}=(f_{j,K}^1,\dots,f_{j,K}^{n+1})$. 
	In particular, assuming that 
	\begin{align*}
	\det
	\begin{pmatrix}
	\p_1 u_{j,K}^1  & \hdots & \p_n u_{j,K}^1 & u_{j,K}^1 \\
	\vdots & \vdots & \vdots & \vdots\\
	\p_1 u_{j,K}^n & \hdots & \p_n u_{j,K}^n & u_{j,K}^n\\
	\p_1 u_{j,K}^{n+1} & \hdots & \p_n u_{j,K}^{n+1} & u_{j,K}^{n+1}
	\end{pmatrix} \geq c>0 \mbox{ in } K,
	\end{align*}
	the triangle inequality implies that if $\epsilon>0$ (and correspondingly $\delta$) is chosen sufficiently small, also 
	\begin{align*}
	\det
	\begin{pmatrix}
	\p_1 u^1  & \hdots & \p_n u^1 & u^1 \\
	\vdots & \vdots & \vdots & \vdots\\
	\p_1 u^n & \hdots & \p_n u^n & u^n\\
	\p_1 u^{n+1} & \hdots & \p_n u^{n+1} & u^{n+1}
	\end{pmatrix} >0 \mbox{ in } K.
	\end{align*}
	This concludes the argument.
\end{proof}

\begin{rmk}
	\label{rmk:support}
	We conclude this section by some comments on the assumptions of the theorem:
	\begin{itemize}
		\item[(a)] The compact support condition for the functions $b_j$, $c_j$, $j=1,2$, is assumed here in order to be able to apply the theory of Grubb \cite{grubb2015fractional}.
		
		\item[(b)] In order to obtain our result we in principle do not need the full strength of the $C^k$, $k\in \N$, approximation result from \cite{ghosh2016calder}. It would for instance be sufficient to use a $C^1$ approximation result for which only lower regularity on the coefficients is needed. Since the theory of Grubb is however formulated in the smooth set-up, we do not optimize the regularity dependences here.
		
		\item[(c)] We again emphasize that although the variant of Theorem \ref{prop:finite_meas} which is proved in this section is interesting from a theoretical point of view, a word of caution is needed as follows: In contrast to the single measurement result from \cite{GRSU18}, the exterior data $f_1,\dots,f_{n+1}$ are \emph{not} arbitrary. In general the specific choice of these data depends on $b_1, b_2$, $c_1,c_2$ and hence the explicit choice of the functions $f_1,\dots, f_{n+1}$ is \emph{not} known in general. This has a similar background (see the following proof) as many results on \emph{hybrid inverse problems}, where it is important that constraints are satisfied, see \cite{BU13, A15}. The result from this section will be improved considerably in the argument leading to the proof of Theorem \ref{prop:finite_meas}.
		
		\item[(d)] In addition to the previous point, there are examples of matrices with entries satisfying elliptic equations, for which the determinant vanishes on an open set, see \cite{cekicdeterminants18}. This indicates that the zero set of the determinant \eqref{eq:non-zero} can indeed be \emph{large}.
	\end{itemize}
\end{rmk}

\subsection{Variations and extensions of the reconstruction result}
We discuss a slight variation of the reconstruction result from Section \ref{subsec:Finite measurements reconstruction} by relaxing the condition that the fields $b$, $c$ are compactly supported in $\Omega$. Recall that a $C^\infty$-smooth function $f$ is \emph{vanishing to infinite order} at a point $x_0$ provided that $\p^\alpha f(x_0)=0$ holds for any multi-index $\alpha=(\alpha_1,\cdots,\alpha_n) \in (\N \cup \{0\})^n$.

\begin{prop}
\label{prop:reconstr_a}
Let $\Omega \subset \R^n$ be a bounded domain with a $C^\infty$-smooth boundary. Let $W \subset \Omega_e$ be an open, smooth domain containing an open neighbourhood of $\partial \Omega$. Let $\frac{1}{2}<s<1$ and assume that $b_j\in C^{\infty}(\overline\Omega)^n$, $c_j\in C^{\infty}(\overline\Omega)$ satisfy \eqref{eigenvalue condition} and  
\begin{align*}
b_1-b_2, \ c_1-c_2 \mbox{ vanish to infinite order on } \partial \Omega.
\end{align*} 
Then, there exist $n+1$ exterior Dirichlet data $f_1,\dots,f_{n+1}\in C^{\infty}_c(W)$ such that if
		\begin{align*}
		\Lambda_{b_1,c_1}(f_l) = \Lambda_{b_2,c_2}(f_l) \mbox{ for } l \in\{1,\dots,n+1\},
		\end{align*}
		then $b_1 = b_2$ and $c_1=c_2$ in $\Omega$.
		Moreover, for any $k\in \N$ the set of exterior data $f_1,\dots,f_{n+1}$, which satisfies this property forms an open subset in $C^{k}_{ c}(W)$.
\end{prop}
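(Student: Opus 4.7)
The plan is to combine the strong unique continuation property of Proposition~\ref{prop strong unique} with a sharp analysis of an $(n+1)\times(n+1)$ Jacobian-type matrix of $n+1$ solutions that remains invertible up to $\partial\Omega$, employing the higher order Runge approximation of Theorem~\ref{thm: Runge approximation}(b).

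\emph{Step 1 (Reduction to a pointwise identity).} Let $u_j^l \in H^s(\R^n)$ solve $((-\Delta)^s + b_j \cdot \nabla + c_j)u = 0$ in $\Omega$ with exterior datum $f_l \in C^\infty_c(W)$, for $j = 1, 2$ and $l = 1, \ldots, n+1$. Testing $\Lambda_{b_1,c_1}(f_l) = \Lambda_{b_2,c_2}(f_l)$ against an arbitrary $g \in H^s(\R^n)$ supported in $\Omega_e$ (so that the drift and potential contributions to $B_{b_j,c_j}$, being integrals over $\Omega$, vanish) gives $((-\Delta)^{s/2} w^l, (-\Delta)^{s/2} g)_{\R^n} = 0$ for $w^l := u_1^l - u_2^l$. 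Combined with $w^l|_{\Omega_e} = 0$, the strong uniqueness property forces $w^l \equiv 0$ on $\R^n$, so $u^l := u_1^l = u_2^l$. Subtracting the two equations for $u^l$ and using that $u^l \in C^\infty(\Omega)$ by interior pseudodifferential regularity yields
\begin{align*}
(b_1 - b_2)(x) \cdot \nabla u^l(x) + (c_1 - c_2)(x)\, u^l(x) = 0, \quad x \in \Omega, \ l = 1,\ldots, n+1.
\end{align*}

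\emph{Step 2 (Choice of $f_l$ and determinant computation).} Using an extension of Theorem~\ref{thm: Runge approximation}(b) and Lemma~\ref{lem:Higher Runge} to the coefficient class $b, c \in C^\infty(\overline\Omega)$, I select $f_l$ such that, with $g_l(x) := x_l$ for $l \le n$ and $g_{n+1}(x) := 1$, $\|d^{-s} u^l - g_l\|_{C^k(\Omega)} < \epsilon$. Writing $u^l = d^s(g_l + e_l)$ with $\|e_l\|_{C^k(\Omega)} < \epsilon$, the matrix $M(x) := (\partial_1 u^l,\ldots,\partial_n u^l, u^l)(x)_{l=1}^{n+1}$ decomposes as $M = d^{s-1} B' + d^s C'$, where $B' = uv^T$ with $u = g + e$ and $v = s(\partial_1 d,\ldots,\partial_n d, 0)^T$, while $C'_{lk} = \partial_k(g_l + e_l)$ for $k \le n$ and $C'_{l,n+1} = g_l + e_l$. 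Since $u$ is precisely the last column of $C'$, $\mathrm{adj}(C')\, u = \det(C')\, e_{n+1}$, whence $v^T \mathrm{adj}(C')\, u = \det(C')\, v_{n+1} = 0$. The matrix determinant lemma then gives
\begin{align*}
\det M(x) = d(x)^{(s-1)(n+1)} \det(B' + d C')(x) = d(x)^{s(n+1)} \det C'(x).
\end{align*}
For the chosen $g_l$ one has $\det C(x) \equiv 1$ on $\overline\Omega$, so for $\epsilon$ small $\det C'(x) \ge \tfrac{1}{2}$ on $\overline\Omega$ and hence $\det M(x) > 0$ throughout $\Omega$. Inverting the linear system from Step~1 pointwise forces $b_1 = b_2$ and $c_1 = c_2$ in $\Omega$, and by continuity on $\overline\Omega$.

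\emph{Step 3 (Openness and main obstacle).} Continuity of the data-to-solution map (Proposition~\ref{prop:well_posed} combined with interior pseudodifferential regularity) implies that $M$ depends continuously on $(f_1,\ldots,f_{n+1}) \in C^k_c(W)^{n+1}$, and the pointwise lower bound $\det M(x) \ge c\, d(x)^{s(n+1)}$ is stable under small $C^k$-perturbations; this yields openness of the admissible set. The principal technical obstacle is the required extension of the higher order Runge approximation of Theorem~\ref{thm: Runge approximation}(b) to non-compactly supported coefficients $b, c \in C^\infty(\overline\Omega)$: the compact support assumption there enters only through Proposition~\ref{prop:higher order tools}(c), and Grubb's $s$-transmission pseudodifferential calculus accommodates smooth coefficients on $\overline\Omega$, but a careful verification of the homeomorphism property in this broader setting is the crux of the argument.
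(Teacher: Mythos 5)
Your proposal takes a genuinely different route from the paper. The paper's (sketched) argument invokes Lemma \ref{lem:ext} to extend $\Omega$ to a slightly larger smooth domain $\Omega'$, uses the infinite-order vanishing of $b_1-b_2$, $c_1-c_2$ on $\partial\Omega$ to produce smooth extensions of the coefficients to $\Omega'$ that \emph{coincide} on $\Omega'\setminus\Omega$ and are compactly supported in $\Omega'$, and then runs the argument of Section \ref{subsec:Finite measurements reconstruction} on $\Omega'$ (accepting that the eigenvalue condition \eqref{eigenvalue condition} may fail on $\Omega'$, so one must pass to Cauchy data as in \cite{ruland2018lipschitz}). You instead work directly on $\Omega$ and push the reconstruction matrix all the way to $\partial\Omega$ via the $d^s$ boundary expansion of Theorem \ref{thm: Runge approximation}(b). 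Your algebraic observation is the nicest part of the proposal: writing $M = d^{s-1}uv^T + d^sC'$ with $u$ precisely the last column of $C'$ and $v_{n+1}=0$ forces the rank-one correction in the matrix determinant lemma to vanish, yielding $\det M = d^{s(n+1)}\det C'$. This is clean and circumvents the Cauchy-data discussion entirely.

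There is, however, a gap which you correctly flag but which is more serious than you suggest. Your argument hinges on a higher-order Runge approximation/regularity statement for coefficients $b, c\in C^\infty(\overline{\Omega})$ that do \emph{not} vanish near $\partial\Omega$. The paper's Proposition \ref{prop:higher order tools} (and hence Theorem \ref{thm: Runge approximation}(b) and Lemma \ref{lem:Higher Runge}) is only proved for $\supp(b),\supp(c)\Subset\Omega$, and the whole point of the paper's domain-extension route is to reduce to this compactly supported situation rather than to verify Grubb's $s$-transmission calculus for the perturbed operator $(-\Delta)^s+b\cdot\nabla+c$ with first-order coefficients non-vanishing at the boundary. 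You assert in Step 3 that ``Grubb's $s$-transmission pseudodifferential calculus accommodates smooth coefficients on $\overline{\Omega}$'' and that ``careful verification of the homeomorphism property\ldots is the crux''; but this verification is precisely the nontrivial input that is missing. A telling symptom of the gap is that your argument makes no use whatsoever of the hypothesis that $b_1-b_2$ and $c_1-c_2$ vanish to infinite order on $\partial\Omega$ — were your Step 2 available as stated, this hypothesis would be redundant. Since the authors include it deliberately (and it is exactly what enables the paper's extension of the coefficients across $\partial\Omega$), this strongly suggests that the Grubb-type regularity you need for non-compactly supported drift does not come for free, and until it is established, Steps 2 and 3 do not go through.
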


This follows from an auxiliary result, which states that under geometric restrictions on the set where we measure the Dirichlet data, we may enlarge our domain and that the DN map on the larger domain is determined by the DN map on the smaller set. This is well-known in the study of local inverse problems -- see e.g. \cite[Lemma 4.2]{salo2004inverse}.

In what follows, we denote by $r_X \widetilde{H}^s(Y)$ the set of all restrictions $f|_{X}$ of functions $f \in \widetilde{H}^s(Y)$ for open $X, Y \subset \mathbb{R}^n$.

\begin{lem}
\label{lem:ext}
	Assume that $\Omega$ and $\Omega'$ with $\Omega \subset \Omega'$ are bounded domains with Lipschitz boundaries and $W_1, W_2 \subset \mathbb{R}^n$ are two open sets, such that $\Omega'\setminus \Omega \Subset W_1$ (so, in particular, $W_1 \cap \Omega'_e$ is non-empty). Then let $b_1, b_2 \in W^{1-s, \infty}(\Omega')$ and $c_1, c_2 \in L^\infty(\Omega')$, and $\frac{1}{2} < s < 1$, and assume $c_1 = c_2$ and $b_1 = b_2$ in $\Omega' \setminus \Omega$. 
	
	Additionally, assume that the coefficients satisfy the eigenvalue condition \eqref{eigenvalue condition} both on $\Omega'$ and on $\Omega$. Assume the equality of DN maps $\Lambda_{b_j,c_j}$ with respect to $\Omega$ 
	$$
	\Lambda_{b_1, c_1}f|_{W_2} = \Lambda_{b_2, c_2}f|_{W_2} \text{ for all } f \in r_{W_1 \cap \Omega_e}\widetilde{H}^s(W_1),
	$$ 
	Then we have the equality of DN maps $\Lambda'_{b_j,c_j}$ with respect to $\Omega'$: 
	$$\Lambda'_{b_1, c_1}f|_{W_2} = \Lambda'_{b_2, c_2}f|_{W_2} \text{ for all }f \in r_{W_1 \cap \Omega'_e}\widetilde{H}^s(W_1).
	$$
\end{lem}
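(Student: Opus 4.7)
The strategy is to combine the Alessandrini identity on the two domains with a cutoff construction that replaces the $\Omega'$-exterior data by equivalent exterior data for the $\Omega$-problem, to which the hypothesis can then be applied.

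Fix $\tilde f \in \widetilde{H}^s(W_1)$ and a test function $\tilde g \in \widetilde{H}^s(W_2)$, and let $u_1, u_2^{\ast}\in H^s(\R^n)$ denote the solutions of the direct $\Omega'$-problem with coefficients $(b_1,c_1)$ and data $\tilde f$ and of the adjoint $\Omega'$-problem with coefficients $(b_2,c_2)$ and data $\tilde g$. The Alessandrini identity on $\Omega'$, proved exactly as in Section \ref{sec:2}, gives
\begin{align*}
((\Lambda'_{b_1,c_1}-\Lambda'_{b_2,c_2})\tilde f)(\tilde g)
=\int_{\Omega'}(b_1-b_2)\cdot\nabla u_1\, u_2^{\ast}\,dx+\int_{\Omega'}(c_1-c_2)u_1 u_2^{\ast}\,dx,
\end{align*}
and the assumption that the coefficients agree on $\Omega'\setminus\Omega$ collapses both integrals to integrals over $\Omega$. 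Since $u_1$ and $u_2^{\ast}$, as elements of $H^s(\R^n)$, also solve the corresponding equations on the subdomain $\Omega\subset\Omega'$, the uniqueness part of Proposition \ref{prop:well_posed} (available thanks to the eigenvalue hypothesis on $\Omega$) identifies them with the $\Omega$-solutions having exterior data $u_1|_{\Omega_e}$ and $u_2^{\ast}|_{\Omega_e}$. Running the $\Omega$-Alessandrini identity in reverse then converts the expression into
\begin{align*}
((\Lambda'_{b_1,c_1}-\Lambda'_{b_2,c_2})\tilde f)(\tilde g) = ((\Lambda_{b_1,c_1}-\Lambda_{b_2,c_2})[u_1|_{\Omega_e}])([u_2^{\ast}|_{\Omega_e}]),
\end{align*}
with the brackets denoting equivalence classes in $\mathbb{X}=H^s(\R^n)/\widetilde{H}^s(\Omega)$.

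The key step is to realise the classes $[u_1|_{\Omega_e}]$ and $[u_2^{\ast}|_{\Omega_e}]$ by representatives lying in $\widetilde{H}^s(W_1)$ and $\widetilde{H}^s(W_2)$ respectively, so that the hypothesis becomes directly applicable. Writing $u_1=\tilde f+\phi_1$ with $\phi_1\in\widetilde{H}^s(\Omega')$, the containment $\Omega'\setminus\Omega\Subset W_1$ makes the closed set $\overline{\Omega'}\setminus W_1$ a compact subset of $\Omega$, and one can therefore pick $\chi\in C_c^\infty(\Omega)$ with $\chi\equiv 1$ on $\overline{\Omega'}\setminus W_1$. The function $\tilde F:=u_1-\chi\phi_1=\tilde f+(1-\chi)\phi_1$ then lies in $\widetilde{H}^s(W_1)$: outside $\overline{W_1}$ either $\phi_1=0$ (when the point lies outside $\overline{\Omega'}$) or $\chi=1$ (when the point lies in $\overline{\Omega'}\setminus\overline{W_1}\subset\overline{\Omega'}\setminus W_1$), so the term $(1-\chi)\phi_1$ vanishes there, while $\tilde f$ is itself supported in $\overline{W_1}$. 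Because $\chi\phi_1\in\widetilde{H}^s(\Omega)$, we have $[\tilde F]=[u_1|_{\Omega_e}]$ in $\mathbb{X}$. The same cutoff construction applied to the analogous decomposition $u_2^{\ast}=\tilde g+\phi_2^{\ast}$ with $\phi_2^{\ast}\in\widetilde{H}^s(\Omega')$, using the corresponding containment $\Omega'\setminus\Omega\Subset W_2$ (which holds in the intended application where $W_1=W_2$), produces $\tilde G\in\widetilde{H}^s(W_2)$ with $[\tilde G]=[u_2^{\ast}|_{\Omega_e}]$.

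Applying the hypothesis to the pair $(\tilde F,\tilde G)$ now gives $((\Lambda_{b_1,c_1}-\Lambda_{b_2,c_2})[\tilde F])([\tilde G])=0$, and chaining the identities above yields $((\Lambda'_{b_1,c_1}-\Lambda'_{b_2,c_2})\tilde f)(\tilde g)=0$ for every $\tilde g\in\widetilde{H}^s(W_2)$, which is the desired equality of $\Omega'$-DN maps on $W_2$. The principal obstacle is precisely the cutoff construction of $\tilde F$: the geometric condition $\Omega'\setminus\Omega\Subset W_1$ is indispensable because it permits modifying $u_1$ only on a compact subset of the interior of $\Omega$ -- so that the modification lies in $\widetilde{H}^s(\Omega)$ and is invisible to the $\Omega$-DN map -- while at the same time compressing the support of the resulting representative into $\overline{W_1}$.
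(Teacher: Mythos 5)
Your reduction via the two Alessandrini identities is sound up to the identity
\begin{align*}
((\Lambda'_{b_1,c_1}-\Lambda'_{b_2,c_2})\tilde f)(\tilde g) = ((\Lambda_{b_1,c_1}-\Lambda_{b_2,c_2})[u_1|_{\Omega_e}])([u_2^{\ast}|_{\Omega_e}]),
\end{align*}
but the step where you apply the hypothesis contains a genuine gap: the lemma only assumes $\Omega'\setminus\Omega\Subset W_1$, and your cutoff construction for the adjoint datum explicitly needs $\Omega'\setminus\Omega\Subset W_2$ (you acknowledge this by invoking ``the intended application where $W_1=W_2$''). Without that extra assumption, the class $[u_2^{\ast}|_{\Omega_e}]$ only admits a representative supported in $\overline{W_2}\cup\overline{\Omega'\setminus\Omega}\subset \overline{W_2}\cup \overline{W_1}$, and the assumed equality $\Lambda_{b_1,c_1}f|_{W_2}=\Lambda_{b_2,c_2}f|_{W_2}$ gives no information about pairing the DN-map difference against data with a component supported in $W_1$; the identity is not symmetric in the two slots, so you cannot trade one for the other. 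Hence your argument proves the lemma only in the special case $W_1=W_2$ (or under the unstated hypothesis $\Omega'\setminus\Omega\Subset W_2$), not as stated. A smaller technical point: the claim $\overline{\Omega'}\setminus W_1\Subset\Omega$ uses $\partial\Omega\subset W_1$, which holds when $\Omega\Subset\Omega'$ but does not follow from $\Omega'\setminus\Omega\Subset W_1$ alone.

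The paper's proof avoids this obstruction by not using Alessandrini identities at all. It takes the $\Omega'$-solution $u_1'$ for $(b_1,c_1)$ with exterior datum $f$, then solves the $\Omega$-problem for $(b_2,c_2)$ with exterior datum $u_1'|_{\Omega_e}$, which is admissible for the hypothesis precisely because $\Omega'\setminus\Omega\Subset W_1$. Testing the assumed equality of the $\Omega$-DN maps against $\phi\in C_c^{\infty}(W_2\cap\Omega_e)$ (as in Remark \ref{rmk:DtN_char}) yields $(-\D)^s u_1'=(-\D)^s u_2$ on $W_2\cap\Omega_e$, while $u_1'=u_2$ there by construction; the unique continuation result, Proposition \ref{prop strong unique}, then forces $u_1'\equiv u_2$ on all of $\R^n$. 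A direct comparison of the bilinear forms on $\Omega'$, splitting the integrals over $\Omega$ and $\Omega'\setminus\Omega$ and using $b_1=b_2$, $c_1=c_2$ on $\Omega'\setminus\Omega$, then shows $u_2$ solves the $(b_2,c_2)$-equation on $\Omega'$ with datum $f$ and that $\Lambda'_{b_1,c_1}f$ and $\Lambda'_{b_2,c_2}f$ agree on $W_2$. This uses the hypothesis only once, with test functions genuinely supported in $W_2\cap\Omega_e$, and therefore works for arbitrary $W_2$; if you want to keep your double-Alessandrini route, you would either have to add the hypothesis $\Omega'\setminus\Omega\Subset W_2$ to the statement or incorporate a unique-continuation step of this kind.
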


\begin{proof}
	Assume $u_1' \in H^s(\mathbb{R}^n)$ solves the Dirichlet problem for $f \in \widetilde{H}^s(W_1 \cap \Omega'_e)$
	\begin{align*}
		((-\Delta)^s + b_1 \cdot \nabla +  c_1)u_1' &= 0 \, \text{ in } \, \Omega',\\
		u_1'|_{\Omega'_e} &= f|_{\Omega'_e}.
	\end{align*}
	Then solve the analogous Dirichlet problem with respect to the smaller domain $\Omega$:
	\begin{align*}
		((-\Delta)^s + b_2 \cdot \nabla +  c_2)u_2 &= 0 \, \text{ in } \, \Omega,\\
		u_2|_{\Omega_e} &= u_1'|_{\Omega_e}.
	\end{align*}
By using the computation from Remark \ref{rmk:DtN_char} for $\phi \in C_c^\infty(\Omega_e \cap W_2)$ (we need the support condition here) and the hypothesis of equality of the DN maps, we get:
	\begin{align*}
		&\Lambda_{b_1, c_1}f|_{W_2 \cap \Omega_e} = \Lambda_{b_2, c_2}f|_{W_2 \cap \Omega_e}, \\
		\implies & B_1(u_1', \phi) = B_2(u_2, \phi),\\
		\implies &\int_{\mathbb{R}^n} \phi (-\Delta)^s u_1' dx= \int_{\mathbb{R}^n} \phi (-\Delta)^s u_2 dx, \\ \implies & (-\Delta)^s u_1' \equiv (-\Delta)^s u_2 \text{ on } W_2 \cap \Omega_e.
\end{align*}
Here, for convenience, we have used the abbreviation $B_{j}(\cdot, \cdot ) = B_{b_{j},c_j}(\cdot, \cdot)$ for $j\in\{1,2\}$. Furthermore, note that here we also use that $u_1'|_{\Omega_e} \in r_{\Omega_e \cap W_1} \widetilde{H}^s(W_1)$, since $\Omega' \setminus \Omega \Subset W_1$. Therefore, we have the following relations:
	\begin{equation}
		(-\Delta)^s(u_1' - u_2) = 0 \text{ on } W_2 \cap \Omega_e \quad \text{and} \quad u_1' - u_2 = 0 \text{ on } W_2 \cap \Omega_e .
	\end{equation}
	By the strong uniqueness Proposition from \ref{prop strong unique}, we conclude $u_1' \equiv u_2$ on the whole of $\mathbb{R}^n$.
	
	Now we proceed to compare the DN maps on the domain $\Omega'$. We have the following chain of equalities for $w \in C_c^\infty(W_2 \cup \Omega)$ and $u_1', u_2$ as above:
	\begin{align*}
		B_2'(u_2, w) &= \int_{\mathbb{R}^n} (-\Delta)^{\frac{s}{2}} u_2 \cdot (-\Delta)^{\frac{s}{2}} w dx + \int_{\Omega'} b_2 \cdot \nabla u_2 w dx + \int_{\Omega'} c_2 u_2 w dx\\
		&= B_2(u_2, w) + \int_{\Omega' \setminus \Omega} b_2 \cdot \nabla u_2 w dx + \int_{\Omega' \setminus \Omega} c_2 u_2 w\\
		&= B_1(u_1', w) + \int_{\Omega' \setminus \Omega} b_1 \cdot \nabla u_1' w dx + \int_{\Omega' \setminus \Omega} c_1 u_1' w\\
		&= B_1'(u_1', w).
	\end{align*}
	Here we split the integrals over $\Omega$ and $\Omega' \setminus \Omega$, used the notation $B'_1, B_2'$ for the bilinear form associated to the equation on $\Omega'$, the definition of the DN map (see Definition \ref{defi:DtN}), the fact that $u_2 = u_1'$, and $b_1 = b_2$ and $c_1 = c_2$ on $\Omega' \setminus \Omega$. 
	
	We conclude that $u_2$ solves $(-\Delta)^{s} u_2 + b_2 \cdot \nabla u_2 + c_2 u_2 = 0$ in $\Omega'$, with $u_2|_{\Omega'_e} = f|_{\Omega'_e}$. Also, we conclude that for all $w$ as above
	\[(\Lambda'_{b_2, c_2}f, w) = B_2'(u_2, w) = B_1'(u_1', w) = (\Lambda'_{b_1, c_1} f, w).\]
Here we use $\Lambda'$ notation for the DN map on $\Omega'$. The main claim follows by observing that we may pick arbitrary $w \in C_c^\infty(W_2)$. Note that we need to assume that the operators satisfy condition \eqref{eigenvalue condition} on the bigger set $\Omega'$ to assume well-definedness of the DN maps.
\end{proof}

\begin{proof}[Sketch of the argument for Proposition \ref{prop:reconstr_a}]
The proof of Proposition \ref{prop:reconstr_a} is similar to Section \ref{subsec:Finite measurements reconstruction}, so we only sketch the arguments here. With the auxiliary result of Lemma \ref{lem:ext} at hand, we deduce the claim of Proposition \ref{prop:reconstr_a} by extending $\Omega$ to $\Omega'$ suitably such that the geometric conditions on the domains are satisfied. As our operator is not self-adjoint, in the extended domain $\Omega'$ we might possibly work with Cauchy data, as we could catch a finite number of Dirichlet eigenvalues (c.f. Remark \ref{rmk:domainmonotonicity} below on how to avoid this in some cases). However, using arguments as in \cite{ruland2018lipschitz}, it is possible to deduce analogous results.
\end{proof}

\begin{rmk}[Domain monotonicity]\label{rmk:domainmonotonicity}
We remark that when the drift term $b=0$, then it is possible to avoid dealing with Cauchy data by using perturbation of domain arguments. Indeed, for the fractional Laplacian (and self-adjoint fractional Schrödinger operators), it is possible to characterize the Dirichlet spectrum through \emph{min-max formulations} \cite{FI18} (see also \cite{dFG92} for similar settings for the classical Laplacian). Relying on the weak unique continuation property, it is possible to show the monotonicity of eigenvalues 
\begin{align*}
\lambda_k(\Omega_1) >  \lambda_k(\Omega_2)\text{ for }\Omega_1 \subsetneq \Omega_2\text{ and }k\in \N,
\end{align*}
where $\lambda_k$ is the $k$-th Dirichlet eigenvalue of the fractional Laplacian. 
Thus, perturbing the domain suitably and only considering a finite number of eigenvalues in the case of self-adjoint fractional Schrödinger operators, one might work with the DN map instead of having to resort to Cauchy data.
\end{rmk}

\subsection{Proof of Theorem \ref{prop:finite_meas} and generic properties of determinants via singularity theory}

In this section, we prove the full result of Theorem \ref{prop:finite_meas}, i.e. we show that the set of exterior data from which we can choose $n + 1$ measurements in order to recover the coefficients on a compact set $K$ as in previous sections is \emph{open and dense}. This significantly improves the result from Section \ref{subsec:Finite measurements reconstruction} in that the data still depend on the \emph{unknown} potentials $b,c$, but in a precise sense they form a large set, i.e. given (random) exterior data, we know that an arbitrarily small perturbation of them will render them admissible in our reconstruction scheme.

The main idea of our argument is to relax the condition that for admissible exterior data $f_1,\dots,f_{n+1}\in C_c^{\infty}(W)$ we require
\begin{align*}
h(P_{b,c}(f_1),\dots,P_{b,c}(f_{n+1})) \neq 0 \mbox{ in } K \subset \Omega, 
\end{align*}
where $h$ was the function from \eqref{eq:h}. Instead, we consider data $f_1,\dots,f_{n+1}\in C_c^{\infty}(W)$ such that $h(P_{b,c}(f_1),\dots,P_{b,c}(f_{n+1}))$ is only allowed to vanish to a finite (dimension-dependent) order. Then, by known results from \cite[Lemma 3]{bar1999zero}, it follows that the set  
$$
\left\{x\in K; \ h(P_{b,c}(f_1),\dots,P_{b,c}(f_{n+1}))(x)=0\right\}
$$ 
is of measure zero in $K \Subset \Omega$.\footnote{More specifically, it is countably-$C^\infty$-rectifiable, i.e. covered by a countable union of hypersurfaces.} As a consequence, due to the continuity of $b$, $c$, it is then possible to reconstruct both coefficients (see Lemma \ref{lem:aux_recov}). Simultaneously, the set of exterior data $f_1,\dots,f_{n+1}\in C_c^{\infty}(W)$ for which $h(P_{b,c}(f_1),\dots,P_{b,c}(f_{n+1}))$ vanishes only of finite (dimension-dependent) order immediately by definition is \emph{open} in $C_c^{\infty}(W)^{n+1}$. The \emph{density} of such data will be obtained via small perturbations, relying on ideas of Whitney's work \cite{whitney}, which had been developed in the context of \emph{singularity theory}. Technically, this is the most involved part of our arguments.

Let us introduce the set of our admissible exterior conditions. 

\begin{defi}
\label{defi:data} Let $b$, $c$ satisfy the conditions in Theorem \ref{prop:finite_meas}.
Let $n\in \N$ and $k(n)=  \Big\lceil{\sqrt{n + 1}}\Big\rceil \in \N$ (i.e. $k(n)$ is the smallest positive integer greater or equal to $\sqrt{n + 1}$). Let $\Omega \subset \R^n$ be as in Theorem \ref{prop:finite_meas} and $K\Subset \Omega$ be a compact set as in Section \ref{subsec:Finite measurements reconstruction}.
Then, we define the set $\mathcal{F} \subset C_c^{\infty}(W)^{n + 1}$ to be the set of exterior data, such that $(f_1, \dotso, f_{n + 1}) \in \F$, if $h(P_{b,c} f_1, \dotso, P_{b, c} f_{n + 1})$ has at most order of vanishing $k(n)-2$ at each point of $K \Subset \Omega$.
\end{defi}

We claim that the set $\F$ yields the desired set of exterior data for the proof of Theorem \ref{prop:finite_meas}. To this end, we first show that given exterior data $(f_1, \dotso, f_{n + 1})\in \F$, it is possible to recover $b$ and $c$:

\begin{lem}
\label{lem:aux_recov}
Assume that the conditions of Theorem \ref{prop:finite_meas} hold.
Let $(f_1, \dotso, f_{n + 1})\in \F$. Then it is possible reconstruct $b,c$ from the exterior measurements of $f_l$ and $\Lambda_{b,c}(f_l)$, for $l \in \{1,\dotso,n+1\}$.
\end{lem}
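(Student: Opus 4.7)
The plan is to adapt the pointwise linear-algebraic recovery from Section \ref{subsec:Finite measurements reconstruction}, now allowing the determinant $h$ to vanish on a negligible subset of $K$, and to close the argument via continuity of $b$ and $c$.

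First I would recover the solutions $u_l := P_{b,c}(f_l) \in H^s(\R^n)$ for $l \in \{1,\dots,n+1\}$ from the exterior data $f_l \in C_c^\infty(W)$ and the measurements $\Lambda_{b,c}(f_l)$. Uniqueness of $u_l$ is immediate from Proposition \ref{prop:well_posed} combined with the eigenvalue condition \eqref{eigenvalue condition}; the explicit constructive recovery carries over verbatim from the nonlocal unique continuation based reconstruction scheme of \cite{GRSU18} (cf.\ Proposition \ref{prop strong unique}), since that argument uses only the strong uniqueness of $(-\Delta)^s$ on the exterior set, which is unaffected by the lower-order terms $b\cdot\nabla + c$. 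Interior regularity for fractional Schrödinger equations with smooth coefficients whose supports lie strictly inside $\Omega$ will give $u_l \in C^\infty(\Omega)$, and hence $h := h(u_1,\dots,u_{n+1}) \in C^\infty(\Omega)$.

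Next I would analyse the zero set of $h$ inside $K$. By Definition \ref{defi:data}, the smooth function $h$ vanishes to order at most $k(n)-2$ at every point of $K$. By \cite[Lemma 3]{bar1999zero} the zero set $Z := \{x \in K : h(x) = 0\}$ is countably $C^\infty$-rectifiable; in particular $Z$ is closed in $K$, has empty interior, and has Lebesgue measure zero, so the open set $K \setminus Z$ is dense in $K$. At each $x \in K \setminus Z$ the $(n+1)\times(n+1)$ matrix in the linear system \eqref{eq:reconstruction} is invertible, and Cramer's rule then expresses $(b_1(x),\dots,b_n(x),c(x))$ as an explicit rational function of the values $u_l(x)$, $\nabla u_l(x)$ and $(-\Delta)^s u_l(x)$ for $l = 1,\dots,n+1$.

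Finally, since $b \in C_c^\infty(\Omega)^n$ and $c \in C_c^\infty(\Omega)$ are continuous and $K \setminus Z$ is dense in $K$, the values of $b$ and $c$ on $K\setminus Z$ determine them throughout $K$ by continuity. Taking $K$ large enough so that $\supp(b) \cup \supp(c) \subset K$ (the definition of $\mathcal{F}$ being parametrised by such a compact subset), and using that $b$ and $c$ vanish outside their supports, then yields the full reconstruction on $\Omega$. The main technical ingredient is Bär's lemma, which serves as the bridge from the finite-order vanishing built into the definition of $\mathcal{F}$ to the measure-zero, nowhere dense structure of the exceptional set $Z$ needed to invoke continuity; the remaining steps are routine once the reconstruction scheme for $u_l$ and the interior smoothness of the coefficients are in hand.
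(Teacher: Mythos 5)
Your proposal is correct and follows essentially the same route as the paper: invoke B\"ar's lemma to conclude from the finite order of vanishing built into $\F$ that the zero set of $h(P_{b,c}f_1,\dots,P_{b,c}f_{n+1})$ is negligible, solve the linear system \eqref{eq:reconstruction} pointwise on its complement in $K$, and recover $b,c$ on all of $K$ by continuity and the support condition. The preliminary recovery of the solutions $u_l$ from $f_l$ and $\Lambda_{b,c}(f_l)$ via the unique continuation/reconstruction arguments of \cite{GRSU18} is exactly what the paper uses as well (in the proof of the weaker version of Theorem \ref{prop:finite_meas} in Section \ref{subsec:Finite measurements reconstruction}).
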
 

\begin{proof}
We first recall that the zero set of a smooth function not vanishing to infinite order is contained in a countable union of codimension one submanifolds (see e.g. \cite[Lemma 3]{bar1999zero}). In particular, by definition of the set $\F$, we thus infer that $h(P_{b,c}(f_1),\dots,P_{b,c}(f_{n+1}))$ vanishes only on a set of Lebesgue measure zero. Let us denote this measure zero set by $B \subset K$.
Therefore, the argument in \eqref{eq:reconstruction} goes through on the set $K \setminus B$. But $b, c$ satisfy $\supp(b)\cup \supp(c)\Subset K$ and are smooth, so have unique continuous extensions to $K$, which can be determined from $b|_{K\setminus B}$ and $c|_{K \setminus B}$.
This concludes the argument for the reconstruction of $b,c$ from $f_l$ and $\Lambda_{b,c}(f_l)$ for $l\in \{1,\dotso,n+1\}$.
\end{proof}

Hence, it remains to prove the \emph{openness} and \emph{density} of the set $\F\subset C^\infty_0(W)^{n+1}$. While the \emph{openness} is a direct consequence of the definition of the set $\F$, the \emph{density} of the set $\F$ requires careful arguments. This will be the content of the remaining subsections.

\subsubsection{Generic properties of determinants via singularity theory} 

In order to deduce the density of the set $\F$, we seek to argue by perturbation: The main idea is that for an $m$-tuple of functions $f = (f_1, \dotso, f_m)$ on $\mathbb{R}^n$, and some differential relation $P(x,D)(f)(x)=0$ on $\mathbb{R}^n$, we may generate a parametric family $f_{\alpha}$, in such a way that on a compact set, near any $\alpha_0$ there is an $\alpha$ arbitrarily close, such that $P(x,D)(f_\alpha) \neq 0$ on $K$. Here $\alpha \in \mathbb{R}^N$ for some (large) $N \in \N$ and $f_\alpha$ is given by adding a polynomial of degree $r$ (related to $N$) to each of the entries $f_i$ of $f$, with coefficients given by reading off indices of $\alpha$ in a suitable order.

A famous example, due to Morse, of this fact is just a $C^2$ function $f: \mathbb{R}^n \to \mathbb{R}$. Then by looking at $f_\alpha (x) = f(x) + \langle{\alpha, x}\rangle$, where $\langle{\cdot, \cdot}\rangle$ is the inner product and $\alpha \in \mathbb{R}^n$, by \emph{Sard's theorem} the set of $\alpha$ for which $f_\alpha$ has a degenerate critical point is of measure zero. These ideas were generalised by Whitney \cite{whitney} and others in the area of singularity theory to study generic maps $\mathbb{R}^n \to \mathbb{R}^m$.

In our case, the idea is that by adding generic polynomials of degree $k(n)\in \N$ with small coefficients to $P_{b, c} f_1, \dotso, P_{b, c} f_{n + 1}$, we may obtain perturbations such that the determinant function $h$ from \eqref{eq:h} only vanishes of order at most $k(n)-2$. Here $k(n)\in \N$ is the constant from Definition \ref{defi:data}.
Since the perturbation is just by polynomials of order $k(n)$, i.e. by a linear combination of one of 
\begin{align}\label{eq:N_0}
N_0 =  \sum\limits_{j=0}^{k(n)} {n +  j - 1 \choose j}
=  {n +k(n) \choose k(n)}
\end{align}
linearly independent polynomials of the form $1, x_i, x_i x_j, ...$, by Runge approximation we may approximate these by $P_{b, c} (f_{i,m})$ for $i\in \{1 , \dotso, N_0\}$ arbitrarily close, for some suitable exterior data $\left\{f_{i,m}\right\}$'s. By adding a linear combination of $f_{i,m}$ with coefficients $\alpha$ to the exterior data, one can obtain an arbitrary close measurement for which the zero set of $h$ is ``good", i.e. is just given by a stratification of smooth hypersurfaces (see Propositions \ref{prop:n=1} and \ref{prop:n} below). In the sequel, we present the details of this argument.

\subsubsection{Preliminaries}
We need to import some (old) technology from \cite[Parts A and B]{whitney}, which allows us to modify functions in a favorable way by simply applying dimension-counting arguments. We consider a mapping $f = (f_1, \dotso, f_m): \mathbb{R}^n \to \mathbb{R}^m$. We consider derivatives of order up to $r \in \mathbb{N}$ of $f$ and a map $\bar{f}: \mathbb{R}^n \to \mathbb{R}^N$ given by arranging the partial derivatives of $f$ in some fixed order. Then there is a ``bad" set $S \subset \mathbb{R}^N$ that we would like to avoid. In general, the space $S$ is stratified, i.e. there is a splitting $S = \cup_{i \leq \mu} S_i$, where $S_i$ are smooth manifolds of dimension $\dim S_i$ for $i\in \{1,\dotso,\mu\}$. More precisely, we say $S$ is a \emph{manifold collection of defect $\delta$}, if $\cup_{i \leq j} S_i$ is closed for all $j\in \{1,\dotso,\mu\}$ and $\codim S_i \leq \delta$ for all $i$.

We alter $f$ by adding to it a polynomial of degree $\leq r$ whose coefficients form a set $\alpha$ of very small numbers. If $f_{\alpha}$ is the resulting mapping $\mathbb{R}^n \to \mathbb{R}^N$, we may prove that for compact subsets $K \subset \mathbb{R}^n$ and $T \subset S$ there exists $\alpha$ arbitrarily small such that $f_{\alpha}(K) \cap T = \emptyset$. If we fix $K$, by taking $\bar{f}(K) \subset B_L \subset \mathbb{R}^N$ for some large $L$, we prove that for any $K$, there is an $\alpha$ such that $f_{\alpha}(K) \cap S = \emptyset$.

Let $N \in \N$ be the number given as above. Assume that we have a smooth map for $(p, \alpha) \in \Omega \times R_1 \subset \mathbb{R}^n \times \mathbb{R}^N$
\[F(p, \alpha) = f_\alpha (p) = f_p^*(\alpha)\]
into $\R^N$.
The family $f_{\alpha}$ is called an \emph{$N$-parameter family} of mappings of $\Omega $ into $\mathbb{R}^N$ if the matrix $\nabla_\alpha f_p^*(\alpha)$ has full rank for all $p \in \Omega$.

Next, we say a subset $Q$ of a Euclidean space is of \emph{finite $\mu$-extent} if there is a number
$A$ with the following property. For any integer $\kappa$, there are sets $Q_1, \dotso, Q_a$ such that
\begin{align}
Q = Q_1 \cup \dotso \cup Q_a, \quad \diam(Q_j) < 1/2^\kappa \ (\text{for all } j),\quad a \leq 2^{\mu \kappa}A.
\end{align}

\begin{lem}[Lemma 9a in \cite{whitney}]\label{lem:9a} Let the $f_{\alpha}$ form an $N$-parameter family of mappings of $\Omega \subset \mathbb{R}^n$ into $\mathbb{R}^N$ and let $\Omega_1\subset \Omega$ and $Q$ be compact subsets of $\mathbb{R}^n$ and of $\mathbb{R}^N$ of finite $\omega$-extent and of finite $q$-extent, respectively, and suppose $\omega +q < N$. Then for any $\alpha_0\in \R^N$ and for any $\epsilon>0$ there exists $\alpha$ with $|\alpha-\alpha_0|<\epsilon$ such that 
 \[f_\alpha(\Omega_1) \cap Q = 0.\]
\end{lem}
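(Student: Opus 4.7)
\textbf{Proof proposal for Lemma \ref{lem:9a}.} The plan is to reduce the assertion to a measure-theoretic dimension count: I will exhibit the ``bad'' set
$$
B := \{\alpha \in \R^N : f_\alpha(\Omega_1) \cap Q \neq \emptyset\}
$$
as a locally Lipschitz image of $\Omega_1 \times Q$, then exploit that $\omega + q < N$ to show $B$ has Lebesgue measure zero in $\R^N$. Once this is established, any open ball around $\alpha_0$ contains points outside $B$, which is precisely the conclusion.

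First, I would use the $N$-parameter family hypothesis: at every $(p,\alpha) \in \Omega \times \R^N$ the matrix $\nabla_\alpha f^*_p(\alpha)$ is invertible. Hence the implicit function theorem applied to the equation $F(p,\alpha) = q$ yields, locally around any triple $(p_0,\alpha_0,q_0 = F(p_0,\alpha_0))$, a unique $C^\infty$ solution $\alpha = \Phi(p,q)$. By compactness of $\Omega_1$ (and of the relevant portion of $Q$), one can cover $\Omega_1$ by finitely many closed pieces $\Omega_1^{(j)}$ and correspondingly obtain finitely many local solution maps $\Phi_j:\Omega_1^{(j)} \times Q_j \to \R^N$, whose union of images contains $B$ restricted to a neighborhood of $\alpha_0$. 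Each $\Phi_j$ is Lipschitz on its (compact) domain.

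The next step is a finite-extent bookkeeping argument. Since $\Omega_1$ has finite $\omega$-extent and $Q$ has finite $q$-extent, their product $\Omega_1 \times Q$ has finite $(\omega+q)$-extent in $\R^n \times \R^N$ (one takes the product of the coverings and uses $\diam(A\times B) \le \diam A + \diam B$ to control the diameters). The Lipschitz property of $\Phi_j$ preserves finite $(\omega+q)$-extent in $\R^N$ up to multiplication of the constant $A$ by a power of the Lipschitz constant. Consequently, each $\Phi_j(\Omega_1^{(j)} \times Q_j)$ is of finite $(\omega+q)$-extent in $\R^N$; but any set of finite $\mu$-extent in $\R^N$ with $\mu < N$ has $N$-dimensional Lebesgue measure zero (cover by $\le 2^{\mu\kappa}A$ balls of radius $2^{-\kappa}$, whose total volume is $\lesssim 2^{(\mu-N)\kappa} \to 0$ as $\kappa\to \infty$). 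A finite union preserves this property, so $B \cap B_\epsilon(\alpha_0)$ has measure zero.

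The main obstacle I anticipate is the careful patching of the local solutions produced by the implicit function theorem: one must ensure that the neighborhoods on which $\Phi_j$ is defined cover the compact set $\Omega_1 \times Q_j$ uniformly in $(p,q)$, and that the resulting Lipschitz constants remain bounded. This is handled by a standard compactness argument applied to the constants arising in the implicit function theorem (which depend continuously on $(p,\alpha,q)$). Once this is in place, the measure-zero conclusion yields $\alpha \in B_\epsilon(\alpha_0) \setminus B$ for every $\epsilon > 0$, and by definition of $B$ such an $\alpha$ satisfies $f_\alpha(\Omega_1) \cap Q = \emptyset$, completing the proof.
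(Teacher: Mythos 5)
The paper does not actually prove this lemma: it is imported from Whitney's work (it is literally Lemma 9a of \cite{whitney}, stated as such), so there is no in-paper proof to compare against. Your argument should therefore be judged against Whitney's original proof, and the strategy you follow --- express the bad set
$$
B=\{\alpha:\ f_\alpha(\Omega_1)\cap Q\neq\emptyset\}
$$
near $\alpha_0$ as a finite union of Lipschitz images of compact pieces of $\Omega_1\times Q$ via the implicit function theorem applied to $F(p,\alpha)=q$, note that a product of sets of finite $\omega$- and $q$-extent has finite $(\omega+q)$-extent, that Lipschitz maps preserve finite extent (up to an explicit change of constant), and that a set of finite $\mu$-extent with $\mu<N$ has $N$-dimensional measure zero --- is essentially Whitney's argument, and it is correct.

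One point that deserves to be spelled out rather than waved at: when you cover $\Omega_1\times\overline{B_\epsilon(\alpha_0)}$ by the inverse-function neighborhoods, the implicit function theorem hands you triples $(U_j,V_j,W_j)$ of neighborhoods of $(p_j,q_j,\alpha_j)$ and a solution map $\Phi_j:U_j\times V_j\to W_j$ with $F(p,\Phi_j(p,q))=q$. If you only arrange $\{U_j\times W_j\}$ to cover $\Omega_1\times\overline{B_\epsilon(\alpha_0)}$, then for $\alpha\in W_j$ and $p\in U_j$ with $F(p,\alpha)=q\in Q$ you cannot yet write $\alpha=\Phi_j(p,q)$, because there is no guarantee that $q\in V_j$. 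The fix is to shrink $U_j$ and $W_j$ so that $F(U_j\times W_j)\subset V_j$ (possible by continuity of $F$) before extracting the finite subcover. That is exactly the ``standard compactness argument'' you gesture at, so the gap is one of exposition rather than substance; with that sentence added, the proof is complete and matches Whitney's.
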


By \cite[Section 10]{whitney}, the family of functions $f_\alpha$ given by adding polynomials of order $\leq r$ is an $N$-parameter family of mapping of $\mathbb{R}^n$ into $\mathbb{R}^N$. 
We call $\delta := N - q$ the \emph{defect} of $S$ in $\mathbb{R}^N$, so the condition in Lemma \ref{lem:9a} can be restated as simply $\delta > \omega$. It can be easily seen that the conclusion of the above lemma holds if $Q$ is a manifold collection of defect $\delta > \omega$.

\subsubsection{Density argument}

Let $\mathcal{F} \subset C_c^{\infty}(W)^{n + 1}$ be the set, which contains exterior measurements from Definition \ref{defi:data}. By Lemma \ref{lem:aux_recov}, we may reconstruct the drift $b$ and potential $c$ for such exterior measurements. 

Notice that $\mathcal{F}$ is an \emph{open} set, since the set of all functions $g$ with finite order of vanishing at each point is open and the operator $P_{b, c}$ is continuous in given topologies. In the sequel, we seek to prove that $\mathcal{F}$ is also \emph{dense}.

We first illustrate the argument for the case $n=1$ in which case it is rather transparent. We will then present the proof for the general case below.

\begin{prop}\label{prop:n=1}
	Assume $n = 1$. Then $\mathcal{F} \subset C_c^{\infty}(W)^2$ is open and dense.
\end{prop}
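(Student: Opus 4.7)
The plan is to verify openness of $\F$ (immediate from continuity of the Poisson operator) and density of $\F$ in $C_c^\infty(W)^2$ (the substantive part, treated by a Whitney-style perturbation argument built on the higher-order Runge approximation of Theorem \ref{thm: Runge approximation}(b)).

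For openness, since $K \Subset \Omega$, interior regularity of the fractional Schr\"odinger operator gives continuity of the map $(f_1,f_2) \in C_c^m(W)^2 \mapsto (u_1,u_2)|_K \in C^{m'}(K)^2$ for suitable $m'$. Consequently every partial derivative $\partial^\beta h(u_1,u_2)$ of the Wronskian $h(u_1,u_2) = u_1 u_2' - u_1' u_2$ depends continuously on $(f_1,f_2)$, and the defining condition of $\F$---the uniform lower bound $\inf_{p \in K}\max_{|\beta| \leq k(1)-2}|\partial^\beta h(p)| > 0$ on the compact set $K$---is preserved under sufficiently small $C_c^\infty$-perturbations.

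For density, fix $(f_1,f_2) \in C_c^\infty(W)^2$. By Theorem \ref{thm: Runge approximation}(b), for any $r,m \in \N$ I can choose $\varphi_0,\ldots,\varphi_r \in C_c^\infty(W)$ whose Poisson extensions $\psi_j := P_{b,c}\varphi_j$ approximate $d^s x^j$ on $K$ in $C^m(K)$ to arbitrary accuracy, and I introduce the $N$-parameter family
\[
\tilde f_i := f_i + \sum_{j=0}^{r}\alpha_{ij}\,\varphi_j, \qquad \tilde u_i := P_{b,c}\tilde f_i = u_i + \sum_{j=0}^{r}\alpha_{ij}\,\psi_j, \qquad N = 2(r+1).
\]
A direct expansion shows that $h(\tilde u_1, \tilde u_2)(p) = h(u_1,u_2)(p) + L(\alpha,p) + Q(\alpha,p)$ with $L$ linear and $Q$ quadratic in $\alpha$; the quadratic part contains the bilinear combinations $(\alpha_{1j}\alpha_{2k} - \alpha_{1k}\alpha_{2j})\,h(\psi_j,\psi_k)$, and for $(j,k)=(0,1)$ the factor $h(\psi_0,\psi_1)$ is close to the nowhere vanishing Wronskian $h(d^s, d^s x)$ on $K$, so genuinely nontrivial quadratic variations in $\alpha$ are available.

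The plan is then to view the assignment $p \mapsto j^r \tilde u_\alpha(p) \in \R^N$ as an $N$-parameter family in Whitney's sense and invoke Lemma \ref{lem:9a}: the full-rank hypothesis in $\alpha$ follows from the linear independence of the basis $\{x^j\}_{j=0}^r$ together with the Runge approximation, while the ``bad'' set, where the order of vanishing of $h$ exceeds $k(1)-2$, is the algebraic subvariety of $\R^N$ cut out by the simultaneous vanishing of $\partial^\beta h$ for $|\beta| \leq k(1)-2$. The most delicate step---and the main obstacle---is the codimension estimate $\omega + q < N$ with $\omega = \dim K = 1$: a naive count of vanishing conditions only yields $\binom{1+k(1)-2}{1}$ equations, which is borderline for $n=1$, so I plan to exploit the specific determinantal/Wronskian structure of $h$ (the simultaneous vanishing of successive derivatives $h, h', \ldots$ forces independent linear relations on the jet coordinates, thanks to the nowhere vanishing of $h(\psi_0,\psi_1)$) to upgrade the codimension count to the strict inequality demanded by Lemma \ref{lem:9a}. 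Once this is in place, Whitney produces arbitrarily small $\alpha$ for which $j^r \tilde u_\alpha(K)$ misses the bad set---i.e.\ $(\tilde f_1,\tilde f_2) \in \F$---and with the $\varphi_j$ fixed, the smallness of $\alpha$ yields the desired $C_c^\infty$-closeness of $(\tilde f_1,\tilde f_2)$ to $(f_1,f_2)$.
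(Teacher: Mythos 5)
Your overall route coincides with the paper's: openness from continuity of $P_{b,c}$ and compactness of $K$, and density via an $N$-parameter perturbation family built from higher-order Runge approximation of a polynomial-type basis, fed into Whitney's Lemma \ref{lem:9a}. The openness argument and the construction of the family are essentially fine (two minor points: to get the perturbing exterior data in $C_c^{\infty}(W)$ one should invoke Lemma \ref{lem:Higher Runge} rather than Theorem \ref{thm: Runge approximation}(b) verbatim, and the paper approximates $x^j$ rather than $d^s x^j$ on $K$ --- either basis works, since the relevant jet matrix has determinant $W(1,x,x^2)=2$, respectively $W(d^s,d^sx,d^sx^2)=2d^{3s}>0$, on $K$; also the expansion of $h$ into linear and quadratic parts in $\alpha$ plays no role in the Whitney argument, which operates on the jet map and a bad set in jet space).

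The genuine gap is at the decisive step, and it originates in your identification of the bad set. Taking the bad set to be only $\{h=0\}$ (your literal reading of ``order of vanishing at most $k(1)-2=0$'') cannot work: in the jet space $\R^6$ the locus $\{\alpha_1'\alpha_2-\alpha_1\alpha_2'=0\}$ genuinely has codimension one, so the hypothesis $\omega+q<N$ of Lemma \ref{lem:9a} fails with $\omega=1$, and no ``upgrade'' of this count is possible; indeed the statement you would be proving --- that after an arbitrarily small perturbation $h$ is zero-free on $K$ --- is false in general, because a sign change of $h$ on $K$ survives every sufficiently small perturbation. Your parenthetical about ``successive derivatives $h,h',\dots$'' points in the right direction but is never carried out, and it is not a property one can attach to the set $\{h=0\}$. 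The paper instead takes the bad set to be the jet-space expression of $\{h=h'=0\}$, namely $S=\{\alpha_1'\alpha_2-\alpha_1\alpha_2'=0\}\cap\{\alpha_1''\alpha_2-\alpha_1\alpha_2''=0\}$ (this is the set $Z_{k(1)-1}=Z_1$ controlled by Lemma \ref{lem:genericity}; membership in $\F$ only forbids a common zero of $h$ and $h'$ on $K$, and simple zeros of $h$ are harmless for reconstruction by Lemma \ref{lem:aux_recov}), and then verifies by explicitly computing $\nabla J_1,\nabla J_2$ that $S$ is a manifold collection of defect $2>\omega$, which is exactly what Lemma \ref{lem:9a} requires. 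Without this identification of the correct bad set and the accompanying defect computation, your density argument does not close.
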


\begin{proof}
	Take $f = (f_1, f_2) \in C_c^{\infty}(W)^2$ to be any exterior data and consider $g_i := P_{b, c}f_i \in C^\infty(K)$ for $i = 1, 2$. Write $g = (g_1, g_2)$. We will construct an approximation of $(f_1, f_2)$ in the topology of $C_c^{\infty}(W)^2$ lying in $\mathcal{F}$. In one dimension, we have
	\begin{align}\label{eq:diffreln}
	\begin{split}
	h(g_1, g_2) = \begin{pmatrix} g_1' & g_1\\
	 g_2' & g_2
	\end{pmatrix} = g_1' g_2 - g_1 g_2'.
	\end{split}
	\end{align}
	In other words, $h$ is the Wronskian of $g_1$ and $g_2$. Now, in this case, we have
	\[\bar{g}(x) = \big(g_1(x), g_2(x), g_1'(x), g_2'(x), g_1''(x), g_2''(x)\big)\]
	and $N = 2N_0= 6$ (where $N_0$ was defined in \eqref{eq:N_0}). We consider perturbations of the form
	\begin{align*}
		g_\alpha(x) = g(x) + (\alpha_1 + \alpha_1' x + \alpha_1'' x^2)e_1 + (\alpha_2 + \alpha_2' x + \alpha_2'' x^2)e_2 \in \R^2,
	\end{align*}
	where $e_1, e_2$ are the canonical coordinate vectors in $\R^2$.

	We compute the defect of the bad set given by $h=0$ and $\nabla h = 0$, i.e. 
	\begin{align*}
		S := \{J_1 = \alpha_1' \alpha_2 - \alpha_1 \alpha_2' = 0\} \cap \{J_2 = \alpha_1'' \alpha_2 - \alpha_1 \alpha_2'' =0 \} \subset \mathbb{R}^6.
	\end{align*}
	We need to show that the defect $\delta = 6 - q$, where $q$ is the extent of $S$, is bigger that $n = 1$ to apply Lemma \ref{lem:9a}. To this end, we compute the gradients
	\begin{align*}
		\nabla \bar{J}_1 = (-\alpha_2', \alpha_1', \alpha_2, -\alpha_1, 0, 0),\\
		\nabla \bar{J}_2 = (-\alpha_2'', \alpha_1'', 0, 0, \alpha_2, -\alpha_1).
	\end{align*}
	These are clearly linearly independent for $\alpha_1 \neq 0$ or $\alpha_2 \neq 0$ or $\det \begin{pmatrix}
	\alpha_1' & \alpha_2'\\
	\alpha_1'' & \alpha_2''
	\end{pmatrix} \neq 0$, so $S$ is a manifold collection of defect $\delta = 2$. So we apply Lemma \ref{lem:9a} to obtain arbitrarily small values of $\alpha = (\alpha_1, \alpha_1', \alpha_1'', \alpha_2, \alpha_2', \alpha_2'')$ such that $g_\alpha$ satisfies the property that $h(g_\alpha)$ has an empty critical zero set on $K \subset \overline{\Omega}$.
	
	Next, by Runge approximation (see Lemma \ref{lem:Higher Runge}), there exists $f_{0, m}, f_{1, m}, f_{2, m} \in C_c^{\infty}(W)$ with $P_{b, c} f_{i, m} \to x^i$ in $C^\infty(K)$ for $i = 0,1, 2$ and as $m \to \infty$. Therefore, we define
	\[f_{\alpha, m} = f + (\alpha_1 f_{0, m} + \alpha_1' f_{1, m} + \alpha_1'' f_{2, m}) e_1 + (\alpha_2 f_{0, m} + \alpha_2' f_{1, m} + \alpha_2'' f_{2, m}) e_2 \in \R^2. \]
	Now fix $m$ large enough, so $P_{b, c} f_{i, m}$ is close to $x^i$, such that the perturbation by elements $f_{i, m}$ of $f$ makes a $6$-parameter family of mappings $\Omega' \to \mathbb{R}^6$, for $\alpha \in B_1(0) \subset \mathbb{R}^6$ in the unit ball (say), where $K \subset \Omega' \Subset \Omega$, by compactness. Then we again apply Lemma \ref{lem:9a} and get that $g_{\alpha,m} := P_{b, c} f_{\alpha, m} \to g$ in $C^\infty(K)$ on a sequence of $\alpha$ converging to zero. By construction $f_{\alpha, m} \in \mathcal{F}$, so this finishes the proof.
\end{proof}

\begin{rmk}
\label{rmk:generic}
To prove the desired genericity property, we need to approximate $P_{b, c} f$ by either polynomials or other nice functions (analytic, generic etc.), by use of a linear approximation operator $T_m f$, but such that
\begin{align*}
	T_m P_{b, c} f = P_{b, c} T_m f \quad \text{and} \quad \lim_{m \to \infty} T_m f = f.
\end{align*}
This is the reason why the usual approximation operators, such is the Bernstein polynomials operator, or the general Weierstrass approximation theorem approach are not good for this purpose. The above approximation argument however proves the existence of such $T_m$, which is obtained by adding a finite linear combination of suitable functions with coefficients going to zero as $m \to \infty$.
\end{rmk}

\begin{rmk}
\label{rmk:Morse}
There is an alternative proof by hand of the above statement for $n = 1$, not using the Whitney machinery, but only the genericity of Morse functions.
\end{rmk}

We seek to extend the previous argument to dimension $n=2$ and higher. Unfortunately, in this context, it does not suffice to consider the critical zero set of $h$, i.e. the set $h=0$ and $\nabla h=0$. The computations below show that we have to include higher order derivatives of $h$ to obtain genericity in the sense of the previous proposition. 

Let $N = (n + 1) \times {n + k(n) \choose k(n)}$ be the number of polynomials of degree $\leq k(n)$ with which we perturb (we multiply by $n + 1$ as this is our number of functions). Then we consider the map $\bar{g}:\mathbb{R}^n \to \mathbb{R}^N$ (see previous subsection) of evaluating the derivatives of order $\leq k(n)$ at each point. We define the bad set to be $S = S_{k(n)} \subset \mathbb{R}^N$ consisting of points given by the condition that $h$ and its derivatives up to order $k(n) - 1$ vanish. For a given function $g = (g_1, \dotso, g_{n + 1})$, we denote this set by $Z_0(g) = h^{-1}(0)$, $Z_1(g) = Z_0 \cap (\nabla h)^{-1} (0)$, and inductively we define $Z_j(g) = Z_{j-1}(g) \cap (\nabla^j h)^{-1}(0)$.

Similarly as in one dimension, we then also have the following lemma, which we prove in the appendix:

\begin{lem}[Determinant genericity]\label{lem:genericity}
	The bad set $S_{k(n)} \subset \mathbb{R}^N$ is a manifold collection of defect $n + 1$. In particular, there are arbitrarily small perturbations $g_\alpha$ with $\alpha \in \mathbb{R}^N$, such that $Z_{k(n) - 1}(g_{\alpha}) = \emptyset$ on an arbitrary compact set.
\end{lem}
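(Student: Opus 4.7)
The goal is to establish that the bad set $S_{k(n)} \subset \R^N$ is a manifold collection of defect at least $n+1$; the second statement of the lemma is then immediate from Lemma \ref{lem:9a}, applied with the compact set $K \subset \R^n$ of extent $\omega = n < n+1 \leq \delta$ and the $N$-parameter perturbation family $g_\alpha = g + \sum_i \alpha_i p_i$ obtained by adding polynomials $p_i$ of degree $\leq k(n)$ to each component. The starting point is a dimension count: $S_{k(n)}$ is cut out by the polynomial equations $\partial^\alpha h = 0$ for $|\alpha| \leq k(n)-1$, whose number, by the hockey-stick identity, is
\begin{equation*}
N_1 := \sum_{j=0}^{k(n)-1} \binom{n+j-1}{j} = \binom{n+k(n)-1}{k(n)-1}.
\end{equation*}
Since $k(n) = \lceil \sqrt{n+1}\rceil$, a short case-check (yielding $N_1 = \binom{n+1}{1} = n+1$ exactly when $k(n) = 2$ and a much more generous bound when $k(n)\geq 3$) shows $N_1 \geq n+1$ for every $n\geq 1$. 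Hence once transversality of these $N_1$ equations is established on a large enough set, the defect bound will follow.

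Next I would stratify $\R^N$ by the vanishing pattern of the first-order jet of $g = (g_1,\dots,g_{n+1})$. On the open top stratum $U_{\mathrm{top}}\subset \R^N$, defined by the non-vanishing of at least one row $(\partial_1 g_i, \dots, \partial_n g_i, g_i)$ of the matrix in \eqref{eq:h}, the plan is to show that the map
\begin{equation*}
F : \R^N \to \R^{N_1}, \quad F(j^{k(n)}_x g) = \bigl(\partial^\alpha h(\bar g)(x)\bigr)_{|\alpha|\leq k(n)-1},
\end{equation*}
is a submersion, so that $F^{-1}(0) \cap U_{\mathrm{top}}$ is a smooth submanifold of codimension exactly $N_1 \geq n+1$. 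On the deeper strata, defined by the vanishing of progressively more of the first-order jet, many equations $\partial^\alpha h = 0$ degenerate, but the codimension of the stratum itself in $\R^N$ already contributes, and an inductive analysis using the multilinearity of $h$ in the rows of the matrix from \eqref{eq:h} should maintain total codimension $\geq n+1$ on each piece, in the spirit of Whitney's stratification arguments in \cite{whitney}.

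The main obstacle is the submersion verification on $U_{\mathrm{top}}$. After relabeling so that $g_{n+1}(x) \neq 0$, expanding the determinant along the last column gives $h = \sum_{i=1}^{n+1}(-1)^{n+1+i} g_i \det M^{(i)}$, where $M^{(i)}$ is the $n\times n$ Jacobian matrix of $(g_1,\dots,\widehat{g_i},\dots,g_{n+1})$, and Leibniz's rule yields
\begin{equation*}
\partial^\alpha h = g_{n+1}\, \partial^\alpha \!\det M^{(n+1)} + (\text{terms with a derivative falling on } g_{n+1} \text{ or involving } g_i \text{ for } i \leq n).
\end{equation*}
I plan to analyse the Jacobian $DF$ in block form, grouping columns by jet order in $g$: the block in the top-order variables $\partial^\gamma g_i$ with $|\gamma|=k(n)$ and $i\leq n$ appears linearly with coefficient $g_{n+1}$ times a combinatorial matrix encoding the pairing of multi-indices $\alpha$ with $\gamma$; its invertibility on $U_{\mathrm{top}}$ should follow from a Vandermonde-type argument exploiting distinctness of multi-indices. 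A downward induction on jet order, using the nonvanishing of one of the rows of \eqref{eq:h} to handle the lower blocks, then yields the required full rank $N_1$. Once the defect bound is in place, Lemma \ref{lem:9a} produces arbitrarily small $\alpha$ with $\bar g_\alpha(K) \cap S_{k(n)} = \emptyset$, i.e.\ $Z_{k(n)-1}(g_\alpha) \cap K = \emptyset$, completing the proof.
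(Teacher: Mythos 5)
Your overall framework — reduce to a defect/codimension bound on a stratified bad set and invoke Whitney's Lemma~\ref{lem:9a} with an $N$-parameter polynomial perturbation — matches the paper. The genuine gap is in the submersion step, and I think it cannot be repaired within your chosen stratification.

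The stratification you propose, by the vanishing pattern of rows of the matrix $M(\alpha)$ in \eqref{eq:h}, is the wrong invariant: the equations $\partial^\beta h = 0$ do not degenerate because a row of $M$ vanishes, they degenerate because the \emph{corank} of $M$ jumps. Concretely, the very first component $h = \det M$ of your map $F$ has gradient (in the matrix entries) equal to $\Cof(M)$, which vanishes identically as soon as $\corank M(\alpha) \geq 2$. So $DF$ has a zero row at every such point, and $F$ is certainly not a submersion there. Such points are abundant inside your top stratum $U_{\mathrm{top}}$: a rank-one $(n+1)\times(n+1)$ matrix with nonzero rows belongs to $U_{\mathrm{top}}$, has corank $n \geq k(n)$ for $n \geq 2$, and lies in $S_{k(n)}$ regardless of the higher-order jet (replacing fewer than $\corank M$ rows cannot raise the determinant from zero). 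The same issue kills the Vandermonde block argument you sketch: the coefficient of a top-order variable $\partial^\gamma g_i$ in $\partial^\beta h$ (with $\gamma = \beta + e_j$, $|\beta| = k(n)-1$) is exactly the cofactor $\Cof(M)_{ij}$, so the ``invertible $g_{n+1}$ times combinatorial matrix'' block is identically zero whenever $\corank M \geq 2$, even though $g_{n+1} \neq 0$. There is also a global obstruction: the bad set $S_{k(n)}$ contains the set $\{\corank M(\alpha) \geq k(n)\}$, whose codimension in $\mathbb{R}^N$ is $k(n)^2$; for $n \geq 4$ this is strictly smaller than your $N_1 = \binom{n+k(n)-1}{k(n)-1}$, so $S_{k(n)}$ simply cannot be a codimension-$N_1$ submanifold anywhere near such strata, and the advertised ``generous bound'' for $k(n)\geq 3$ is actually a red flag rather than extra slack.

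The paper's proof instead stratifies by $\corank M(\alpha)$ and, on each stratum, identifies exactly which of the relations \eqref{eq:a}, \eqref{eq:b}, \eqref{eq:c}, \dots, survive the degeneration. For corank $r$, the corank condition alone contributes $r^2$ codimensions; the surviving order-$r$ relations contribute the remaining independent constraints. A case analysis (paper's Steps 1--3 for corank one, then an induction) shows each stratum has codimension $\geq n+1$, and the threshold $k(n) = \lceil\sqrt{n+1}\rceil$ is precisely chosen so that for $\corank M \geq k(n)$ the corank condition alone already forces codimension $k(n)^2 \geq n+1$. To fix your argument you would need to replace the row-vanishing stratification by the corank stratification and abandon the global submersion claim in favor of this corank-by-corank count.
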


As a corollary, we deduce the main result.

\begin{prop}	
\label{prop:n}
	For any $n \in \mathbb{N}$, the set $\mathcal{F} \subset C_c^{\infty}(W)^{n + 1}$ is open and dense.
\end{prop}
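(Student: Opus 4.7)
The openness of $\mathcal{F}$ in $C_c^\infty(W)^{n+1}$ is immediate: if $(f_1,\dots,f_{n+1}) \in \mathcal{F}$, then the function $h(P_{b,c}f_1,\dots,P_{b,c}f_{n+1})$ vanishes to order at most $k(n)-2$ on the compact set $K$, which means that at each point of $K$ at least one partial derivative of order $\leq k(n)-2$ of $h$ is nonzero; by continuity of $P_{b,c}$ from $C_c^\infty(W)$ into $C^\infty(K)$ (via the higher order Runge/mapping properties used in Theorem \ref{thm: Runge approximation}(b)) and by continuity of the determinant $h$ together with its finitely many partial derivatives, this condition persists under sufficiently small $C_c^\infty(W)^{n+1}$ perturbations.

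For the density, I would mimic the two-step structure of Proposition \ref{prop:n=1}. Fix an arbitrary $f = (f_1,\dots,f_{n+1}) \in C_c^\infty(W)^{n+1}$ and let $g_i := P_{b,c} f_i \in C^\infty(K)$. Consider the family of finite-dimensional polynomial perturbations
\[
g_\alpha(x) := g(x) + \sum_{i=1}^{n+1}\,\sum_{|\beta|\leq k(n)} \alpha_{i,\beta}\, x^{\beta}\, e_i,
\]
where $e_1,\dots,e_{n+1}$ is the standard basis of $\R^{n+1}$ and $\alpha = (\alpha_{i,\beta}) \in \R^N$ with $N = (n+1)\binom{n+k(n)}{k(n)}$. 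By \cite[Section 10]{whitney}, the map $\alpha \mapsto g_\alpha$ provides an $N$-parameter family in the sense required by Lemma \ref{lem:9a}, and by Lemma \ref{lem:genericity} the bad set $S_{k(n)} \subset \R^N$ (associated with the evaluation of $h(g_\alpha)$ and its derivatives up to order $k(n)-1$) is a manifold collection of defect $n+1 > n$. Applying Lemma \ref{lem:9a} with $\Omega_1 = K$ (which is of finite $n$-extent) yields, for every $\alpha_0 \in \R^N$ and every $\varepsilon > 0$, some $\alpha$ with $|\alpha-\alpha_0|<\varepsilon$ such that $Z_{k(n)-1}(g_\alpha) \cap K = \emptyset$, i.e.\ $h(g_\alpha)$ vanishes to order at most $k(n)-2$ at every point of $K$.

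In the second step I would transfer the perturbation on the level of the solutions $g_i$ back to a perturbation on the level of the exterior data $f_i$. By the higher order Runge approximation (Lemma \ref{lem:Higher Runge} and Theorem \ref{thm: Runge approximation}(b)), for every multi-index $\beta$ with $|\beta|\leq k(n)$ there exists a sequence $\{f_{\beta,m}\}_{m\in\N} \subset C_c^\infty(W)$ such that $P_{b,c}f_{\beta,m} \to x^\beta$ in $C^\infty(K)$ as $m \to \infty$. Setting
\[
f_{\alpha,m} := f + \sum_{i=1}^{n+1}\,\sum_{|\beta|\leq k(n)} \alpha_{i,\beta}\, f_{\beta,m}\, e_i,
\]
I would first fix $m$ large enough so that, for $\alpha$ ranging in a fixed bounded neighborhood of $0$ in $\R^N$, the map $\alpha \mapsto P_{b,c}(f_{\alpha,m})|_{K}$, together with its derivatives of order $\leq k(n)$, defines an $N$-parameter family on $K$ in the sense of Lemma \ref{lem:9a} (the full-rank property with respect to $\alpha$ being inherited from the polynomial model family by a perturbation argument using that $P_{b,c}f_{\beta,m}$ is close to $x^\beta$ in $C^{k(n)}(K)$). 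Then applying Lemma \ref{lem:9a} once more to this genuine family, there exists $\alpha$ with $|\alpha|$ arbitrarily small such that the determinant $h(P_{b,c}f_{\alpha,m,1},\dots,P_{b,c}f_{\alpha,m,n+1})$ has order of vanishing at most $k(n)-2$ on $K$, i.e.\ $f_{\alpha,m} \in \mathcal{F}$. Since $f_{\alpha,m} \to f$ in $C_c^\infty(W)^{n+1}$ as $|\alpha|\to 0$ (for any fixed $m$), this proves density.

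The main obstacle is the \emph{transfer step}: verifying that the polynomially-perturbed $N$-parameter family, which lives on $\R^{n+1}$-valued functions via adding monomials, is genuinely replaced by the parametric family obtained through Runge approximation when one checks the full-rank condition of Lemma \ref{lem:9a}. This amounts to showing that the $N\times N$ Jacobian in $\alpha$ of the finite jet of $P_{b,c}(f_{\alpha,m})$ at points of $K$ is close, in $C^0(K)$, to the corresponding Jacobian for the polynomial family $g_\alpha$, and is therefore nondegenerate for $m$ sufficiently large. Once this is established, the combination of Lemmas \ref{lem:genericity}, \ref{lem:9a}, and \ref{lem:Higher Runge} closes the argument along the same lines as in the one-dimensional case of Proposition \ref{prop:n=1}.
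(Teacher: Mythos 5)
Your proof is correct and follows essentially the same route as the paper: openness from the definition of $\mathcal{F}$, and density by combining Lemma \ref{lem:genericity}, the Whitney parameter-family Lemma \ref{lem:9a}, and the higher-order Runge approximation, with the crucial observation that the Runge-approximated family inherits the $N$-parameter (full-rank) property from the polynomial model family for $m$ sufficiently large. The only cosmetic difference is that you invoke Lemma \ref{lem:9a} twice (once for the polynomial model, once for the genuine family), mirroring the explicit two-step structure of the paper's $n=1$ case, whereas the compressed proof of Proposition \ref{prop:n} applies it once directly to $P_{b,c}(f_\alpha)$.
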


\begin{proof}
	The proof is an immediate corollary of Lemma \ref{lem:genericity} and the method of proof of Proposition \ref{prop:n=1}. Indeed, openness again follows from the definition of the set $\F$. In order to infer the density of $\F$, we argue along the lines of Proposition \ref{prop:n=1}: Let $f=(f_1,\dots,f_{n+1})\in C_c^{\infty}(W)^{n+1}$ be arbitrary but fixed. 
By Runge approximation (see Theorem \ref{thm: Runge approximation} (b)), for each $\beta \in \N^{n}$ with $|\beta|\leq \lceil{\sqrt{n + 1}}\Big\rceil$ there exists $f_{\beta,m} \in C_c^{\infty}(W)$ such that 
	\begin{align*}
		P_{b,c}(f_{\beta,m}) \rightarrow x^{\beta} \mbox{ in } C^{\infty}(K).
	\end{align*}
As the set of polynomials up to degree $|\beta|\leq \Big\lceil{\sqrt{n + 1}\Big\rceil}$ forms a $\nu$-parameter family with $\nu = N_0$ and $N_0$ as in \eqref{eq:N_0}, for $m\geq m_0$ sufficiently large, also the set $P_{b, c} (f_{\beta,m})$ with $|\beta| \leq \Big\lceil{\sqrt{n + 1}}\Big\rceil $ forms a $N_0$-parameter family. As a result, Lemma \ref{lem:9a} can be applied to
	\begin{align*}
		P_{b,c}(f_{\alpha}):= P_{b,c}(f) + \sum\limits_{j=1}^{ n + 1} \sum\limits_{\beta \in \N^n, \ |\beta| \leq \Big\lceil{\sqrt{n + 1}}\Big\rceil} \alpha_{\beta j} P_{b,c}(f_{\beta, m}) e_j,
	\end{align*}
where $\{e_1, \dots, e_{n+1}\}$ denotes the canonical basis of $\R^{n + 1}$, $\alpha_{\beta j} \in \R$ and $x^{\beta}:= \prod\limits_{\ell=1}^{n} x_{\ell}^{\beta_{\ell}}$. Thus, for any $\epsilon>0$ there exists $\alpha_{\epsilon} \in \R^{N_0  (n +1)}$ with $|\alpha_{\epsilon}|\leq \epsilon$ such that $Z_{ k(n) - 1}(P_{b,c}(f_{\alpha_{\epsilon}})) = \emptyset$ on $K \subset \Omega$. By construction, we have $f_{\alpha_{\epsilon}} \in \mathcal{\F}$. This concludes the density proof.
\end{proof}

Combining Lemma \ref{lem:aux_recov} and Propositions \ref{prop:n=1}, \ref{prop:n} then implies the result of Theorem \ref{prop:finite_meas}.

\begin{rmk}
We remark that Theorem \ref{prop:finite_meas} together with the results from \cite{GRSU18} also yields a constructive reconstruction algorithm for the fractional Calder\'on problem with drift.
\end{rmk}

\begin{rmk}
Last but not least, we point out that similar openness and density results can also be obtained for the Jacobian by arguing along the same lines as in the Appendix. More specifically, genericity results in Lemma \ref{lem:genericity} can be shown to hold with the same critical index $k(n) - 1$, if instead of the determinant in equation \eqref{eq:h} we consider the Jacobian determinant of $n$ functions, which might be of independent interest.
\end{rmk}

\appendix
\section{Genericity of determinants}
\label{sec:appendix}

The aim of this appendix is to prove Lemma \ref{lem:genericity}. In order to determine the conditions which are imposed by the sets $Z_i$ from above, we first rewrite the condition $\nabla h = 0$ in closed form. Using that $\frac{\p \det(M)}{\p M_{ij}} = \Cof(M)_{ij}$ for a matrix $M$, where $\Cof(M)$ denotes the matrix of cofactors, we obtain 
\begin{align*}
\p_{x_j} h(g_1,\dots,g_{n+1})
= \frac{\p h(M)}{\p M_{kl}} \frac{\p M_{kl}}{\p x_{j}}
= \Cof(M)_{kl} \frac{\p M_{kl}}{\p x_{j}},
\end{align*}
where we use summation convention of repeated indices, and  
\begin{align*}
M=M(g_1,\dots,g_{n+1})
= \begin{pmatrix}
\frac{\p g_1}{\p x_1} & \hdots & \frac{\p g_1}{\p x_n} & g_1\\
\vdots & \hdots & \vdots & \vdots \\
\frac{\p g_{n+1}}{\p x_1} & \hdots & \frac{\p g_{n+1}}{\p x_{n}} & g_{n+1} 
\end{pmatrix}
\end{align*}
abbreviates the entries of $h$.
Hence, by the column-wise expansion of the determinant the condition $\p_{j}h=0$ can be reformulated as 
\begin{align}\label{eq:derivative}
\sum\limits_{l=1}^{n+1} \det(\widetilde{M}_l^j) = 0 \mbox{ for } j\in \{1,\dots, n\},
\end{align}
where 
\begin{align*}
\widetilde{M}_l^j = M\left(g_1, \dots, g_{l-1}, \frac{\p g_l}{\p x_j}, g_{l+1}, \dots, g_{n+1}\right).
\end{align*}
Higher order derivatives can be computed analogously (see Section \ref{sec:corank2}).

Hence, in the formal variables $\alpha \in \R^{N_0}$ the two constraints associated with $Z_1$ turn into the formal constraints
\begin{align}
\label{eq:a}
\det(M(\alpha)) &= 0, \\ 
\label{eq:b}
\sum\limits_{l=1}^{n+1} \det(\widetilde{M}_{l}^j(\alpha)) &= 0 \mbox{ for } j =1,\dots,n,
\end{align}
where $\alpha \in \R^{N_0}$ denotes the vector of the component mapping and 
\begin{align*}
M(\alpha) = \begin{pmatrix}
\alpha^1_1 & \alpha_2^1 & \hdots & \alpha_{n}^1 & \alpha^1\\
\alpha_1^2 & \alpha_2^2 & \hdots & \alpha_n^2 & \alpha^2\\
\vdots & \vdots & \vdots & \vdots & \vdots\\
\alpha_1^{n+1} & \alpha_2^{n+1} & \hdots & \alpha_{n}^{n+1} & \alpha^{n+1}
\end{pmatrix}.
\end{align*}
Here we have used the convention that in the arrangement of the derivatives defining $\alpha \in \R^{N_0}$ in \eqref{eq:N_0} we have set $\alpha^{j}$ to correspond to the function $g_j$, $j\in\{1,\dots,n+1\}$, and $\alpha^{j}_k$ corresponding to the partial derivatives $\frac{\p g_j}{\p x_k}$, $j\in\{1,\dots,n+1\}$, $k\in\{1,\dots,n\}$.

As already indicated above, we note that in general the conditions imposed by $Z_1$ do not suffice to apply Lemma \ref{lem:9a}: Indeed, in order to apply Lemma \ref{lem:9a} we have to prove that the bad set is of co-dimension $n+1$. When $n \geq 4$ the set of matrices with co-rank equal to two for instance is non-negligible (i.e. it has larger co-dimension than $n+1$; where we recall that the set of $m_1 \times m_2$ matrices of rank $r$ is a manifold of co-dimension equal to $(m_1 - r) (m_2 - r)$). However, if $\codim M(\alpha) = 2$, then the equations in \eqref{eq:b} are void, i.e. are automatically implied by the corank condition (by definition of the cofactor matrix). Thus, in order to arrive at a stratification of the bad set with a sufficiently large co-dimension, for $n \geq 4$ we \emph{must} include higher order derivatives of the determinant $h$; these are encoded in the sets $Z_i$.

The proof of Lemma \ref{lem:genericity} will be an induction on the corank of $M(\alpha)$, which we discuss in the next three sections.

\subsection{Corank one case} We analyse the case when the corank of the matrix $M(\alpha)$ is equal to one and prove a stratification into a collection of codimension at least $n + 1$ submanifolds in that case. We will use the notation $\alpha^i$ to represent the parameter corresponding to the function $f_i$. The derivatives $\partial_{jk} f_i$ correspond to $\alpha^i_{jk} = \alpha^i_{kj}$ etc. Assume $\corank M(\alpha) = 1$. \\

\emph{Step 1.} Assume $\Cof(M)_{11} \neq 0$. By the expansion in \eqref{eq:b} for $j = 1$ and the Laplace expansion of the determinant, we get an equation for $\alpha^{1}_{11}$. Since this is the unique equation for $\alpha^{1}_{11}$ (the relations \eqref{eq:b} for $j = 2, \dotso, n$ do not contain information on $\alpha_{11}^1$), it is automatically linearly independent of any other relation. We denote the gradient of equation \eqref{eq:b} for $j = 1$ by $G_1$. Denote by $G_j$ the gradient of equation \eqref{eq:b} applied to $j$. We claim that the gradients $G_1, \dotso, G_n$ are linearly independent.

From \eqref{eq:b} for $j = 2$, we get an equation for $\alpha^{1}_{12}$ (as $\Cof(M)_{11} \neq 0$). The gradient $G_2$ of this equation is linearly independent from $G_1$ since this equation doesn't contain $\alpha_{11}^1$. 

Similarly, from \eqref{eq:b} for $j = 3$, we get an equation for $\alpha^{1}_{13}$ (as $\Cof(M)_{11} \neq 0$). The gradient $G_3$ of this equation is linearly independent of $G_1$ and $G_2$, since $G_1$ is the only one containing a non-zero component at the place of $\alpha^1_{11}$. Then, the component of $G_2$ corresponding to $\alpha^{1}_{13}$ is zero, so we obtain the claimed independence.

Iteratively, we obtain that for any $j=1, \dotso, n$, we can extract $\alpha^{1}_{1j}$ from \eqref{eq:b} applied to $j$, and the so obtained gradient $G_j$ is linearly independent from $G_1, \dotso, G_{j-1}$ by a similar argument as in previous two paragraphs.

It is left to observe that the gradient $G$ of the equation \eqref{eq:a} is independent of $G_1, \dotso, G_n$, since \eqref{eq:a} doesn't contain information about $\alpha^{1}_{1j}$ for any $ j=1,\dotso, n$. Therefore, we are contained in a codimension $n + 1$ set.\\

\emph{Step 2.} Assume $\Cof(M)_{11} = 0$, but $\Cof(M)_{12} \neq 0$. Similarly as before, by \eqref{eq:b} for $j = 1$ and the Laplace expansion of the determinant, we get an equation for $\alpha^{1}_{12}$ (as $\Cof(M)_{12} \neq 0$). Denote the gradient of this equation by $G_1$. Also, denote by $G_j$ the gradient of \eqref{eq:b} for $j$. We claim that $G_1, \dotso, G_n$ are linearly independent.

From \eqref{eq:b} for $j = 2$, we get an equation for $\alpha^{1}_{22}$ (as $\Cof(M)_{12} \neq 0$). The gradient $G_2$ of this equation is linearly independent of $G_1$, since the component of $G_1$ at the place of $\alpha^{1}_{22}$ is equal to zero.
Similarly, from \eqref{eq:b} for $j = 3$, we get an equation for $\alpha^{1}_{23}$ (as $\Cof(M)_{12} \neq 0$). The gradient $G_3$ of this equation is linearly independent of $G_1$ and $G_2$. This is because $G_1$ and $G_3$ do not contain information about $\alpha^1_{22}$; then because the equation corresponding to $G_1$ contains no information about $\alpha^{1}_{32}$.

Iterating (as before) we obtain linearly independent gradients $G_1, \dotso, G_n$. It is left to observe that the gradient $G$ of equation \eqref{eq:a} is independent of $G_1, \dotso, G_n$, since \eqref{eq:a} does not contain information about $\alpha^{1}_{2,j}$ for any $ j=1,\dotso, n$, by looking at the entry $\alpha^1_2$. Therefore, we are contained in a codimension $n + 1$ set.\\

\emph{Step 3.} We assume $\Cof(M)_{ij} = 0$ for $i=1,\dotso,n + 1$ and $j=1,\dotso, n$ (otherwise the arguments are reduced to previous two steps). Then without loss of generality (w.l.o.g.), assume $\Cof(M)_{1, n + 1} \neq 0$ (we know that $\Cof(M)$ has rank equal to $1$ by the condition $\corank(M) = 1$). Then from \eqref{eq:b}, we get $n$ independent relations by looking at the coefficient next to $\alpha^1_{j}$ for $j = 1, \dotso, n$, i.e. the gradients $G_1, \dotso, G_n$ of these equations are linearly independent.

Now we are just left to observe that the components of the gradient $G$ of \eqref{eq:a} corresponding to $\alpha^1_i$ are equal to zero by the cofactor condition. Thus $G$ is linearly independent of $G_1, \dotso, G_n$, by looking at the coefficient of $\alpha^1$. 

\subsection{Corank two case}
\label{sec:corank2}
Assume now $\corank(M) = 2$. We need to \emph{include the second order derivatives}, unless $ n<4$. This is because matrices of $\corank M = 2$ have codimension $4$, and the equations \eqref{eq:b} are automatically satisfied in the co-rank two case. The equations for the second order derivatives read
\begin{align}\label{eq:c}
	\sum_{k, l = 1}^{n+1} \det \big(\widetilde{M}_{kl}^{ij}(\alpha)\big) = 0, \quad \text{
for each } \quad i, j =1,\dotso, n.
\end{align}
Here we write $\widetilde{M}^{ij}_{kl}(\alpha)$ for the matrix
\begin{align*}
	\widetilde{M}^{ij}_{kl} = M\left(g_1, \dots, g_{k-1}, \frac{\p g_k}{\p x_i}, g_{k+1}, \dots, g_{l-1}, \frac{\p g_l}{\p x_j}, g_{l+1}, \dots, g_{n+1}\right).
\end{align*}
Here w.l.o.g. we may assume $k < l$. If $k = l$, then we get
\begin{align*}
	\widetilde{M}^{ij}_{kk} = M\left(g_1, \dots, g_{k-1}, \frac{\p^2 g_k}{\p x_i \p x_j}, g_{k+1}, \dots, g_{n+1}\right).
\end{align*}

Since $\corank M(\alpha) = 2$, it is easy to see that $\det \widetilde{M}^{ij}_{kk}(\alpha) = 0$ and we only need to consider matrices with first order derivatives (even in the higher corank case) in the equation \eqref{eq:c}.\\

\emph{Main argument.} W.l.o.g., we assume that the matrix $M_0$ is obtained by erasing the first row and the first column of $M(\alpha)$ has corank equal to one (other cases are dealt with similarly as in Step 2 above). Consider equations \eqref{eq:c} for $i = 1$ and $ j=2,\dotso, n$. By the standard cofactor expansion of the determinant, the coefficient next to $\alpha^1_{11}$ is equal to
\begin{align}\label{eq:coeff1}
	C_j = \sum_{k = 2}^{n+1} \det \widetilde{M}_{1, k}^{j}(\alpha).
\end{align}
Here the matrix $\widetilde{M}_{1, k}^{j}(\alpha)$ is obtained by erasing the first row, i.e.
\begin{align*}
	\widetilde{M}_{1, k}^j = M\left(g_2, \dots, g_{k-1}, \frac{\p g_k}{\p x_j}, g_{k+1}, \dots, g_{n+1}\right).
\end{align*}\\

\emph{Case 1.} Assume we have $C_j = 0$ for each $j=2,\dotso, n$. In this case the $n$ equations in \eqref{eq:coeff1} together with the condition that $\det(M_0)=0$ are exactly of the form as in the corank one case, which had been discussed in the previous section. Hence, by the inductive hypothesis (i.e. by the $\corank M(\alpha) = 1$ case), we have that gradients of $C_j$, together with the gradient of $\det M_0$ span an $n$-dimensional space.

Now putting the first row and column of $M(\alpha)$ back to $M_0$, by the corank condition on $M(\alpha)$, we get additionally at least one more independent relation 
and so this totals to $n + 1$ independent gradients. This means that the co-dimension of the bad set is at least $n + 1$.\\

\emph{Case 2.} Assume w.l.o.g. $C_2 \neq 0$. Similarly to Steps 1 and 2 above, we consider the gradients of the equations \eqref{eq:c} for a range of indices $i = 1, \dotso, n$ and $j = 2$. Denote these gradients by $G_1, \dotso, G_n$. We claim that $G_1, \dotso, G_n$ are linearly independent. Now the gradient $G_n$ is non-zero at the component of $\alpha^1_{n1}$, with the coefficients $C_2$. Note that the component of $\alpha^1_{n1}$ in $G_2, \dotso, G_{n - 1}$ is equal to zero. Also, the component of $\alpha_{11}^1$ is equal to zero in $G_2, \dotso, G_n$ and so $G_n$ is linearly independent of $G_1, \dotso, G_{n-1}$.

Similarly, for each $ k=1,\dotso,n$, the component of $\alpha^{1}_{k1}$ in $G_k$ is equal to $C_2$ and the gradients $G_2, \dotso, G_{k - 1}$ have the component of $\alpha^{1}_{k1}$ equal to zero. But the component of $\alpha_{11}^1$ in $G_2, \dotso, G_k$ is equal to zero. We therefore inductively conclude that $G_k$ is linearly independent of $G_1, \dotso, G_{k - 1}$. This proves the claim.

As a consequence, we obtain $n$ linearly independent conditions from the gradients $G_1,\dots,G_n$ which put constraints on the components $\alpha_{j1}^1$ with $j =1,\dotso, n$. Combined with the condition from $\det(M(\alpha))=0$ (which does not involve any component of $\alpha$ with two indeces) this yields that the bad set is at least of co-dimension $n+1$.

\subsection{Corank at least three} The argument from the corank two case is now easy to generalise. For the case that corank of $M$ is equal to $k$, we need to include $k$ derivatives of $\det M$. The extra equations obtained are analogous to \eqref{eq:c} and now we have a similar equation for each $k$-tuple of integers $1 \leq i_1, i_2, \dotso, i_k \leq n$. Again these extra equations only involve $\alpha^{i}_{jk}$ (and not $\alpha$'s corresponding to higher derivatives), due to the corank condition on $M(\alpha)$.

The cases 1 and 2 above are then obtained in a similar fashion. The case 1 always uses the induction hypothesis in the setting of $n-1$ variables and $k-1$ (first order) derivatives (the equation for $Z_k$ always involves $k$ first order derivatives of $g_1,\dots,g_{n+1}$ of which we always delete one of the rows with one of the first order derivatives defining quantities analogous to $C_j$ from \eqref{eq:coeff1}). Thus, the fact that the bad set is of co-dimension at least one is always a consequence of the co-dimension estimate which follows from the setting with $n-1$ variables and $k-1$ (first order) derivatives (which was proved in the step before) and yields $n$ co-dimensions. Adding the co-dimension from the deletion of the additional row thus gives $n+1$ co-dimensions as desired.

The case 2 is the induction step and is carried out in a very similar way as outlined above, i.e. we use the condition obtained from $Z_k$ involving $k$ first order derivatives of $g_j$. Arguing in this way yields at least $n + 1$ independent relations. We can see that, under the assumption $\corank M(\alpha) = k$, the co-dimension of the bad set should be roughly $n + k^2$, but this is not important for the main argument, as long as we have co-dimension at least $n + 1$.\\

This concludes the argument for the estimate on the co-dimension of the bad set if one considers $Z_1,\dots,Z_k$ with 
\[k = \Big\lceil{\sqrt{n + 1}}\Big\rceil  -1, \]
as for $ \corank M(\alpha) \geq \Big\lceil{\sqrt{n + 1}}\Big\rceil $ already by the linear dependence relations we obtain the desired co-dimension $n+1$ condition.

\vskip0.5cm

\textbf{Acknowledgement.} This project strongly profited from many discussions among the authors during the HIM summer school ``Unique continuation and inverse problems" and the MPI MIS summer school ``Inverse and Spectral Problems for (Non)-Local Operators" at which the authors participated. The authors would like to thank that Hausdorff Center for Mathematics and the Max-Planck Institute for Mathematics in the Sciences for their support during these two weeks. Y.-H. Lin is supported by the Academy of Finland, under the project number 309963.

\bibliographystyle{alpha}
\bibliography{ref}

\begin{thebibliography}{DNPV12}

\bibitem[Alb15]{A15}
Giovanni~S Alberti.
\newblock Enforcing local non-zero constraints in {P}{D}{E}s and applications
  to hybrid imaging problems.
\newblock {\em Communications in Partial Differential Equations},
  40(10):1855--1883, 2015.

\bibitem[B{\"a}r99]{bar1999zero}
Christian B{\"a}r.
\newblock Zero sets of solutions to semilinear elliptic systems of first order.
\newblock {\em Inventiones mathematicae}, 138(1):183--202, 1999.

\bibitem[BGU18]{BGU18}
Sombuddha Bhattacharya, Tuhin Ghosh, and Gunther Uhlmann.
\newblock Inverse problem for fractional-{L}aplacian with lower order non-local
  perturbations.
\newblock {\em arXiv preprint arXiv:1810.03567}, 2018.

\bibitem[BU13]{BU13}
Guillaume Bal and Gunther Uhlmann.
\newblock Reconstruction of coefficients in scalar second-order elliptic
  equations from knowledge of their solutions.
\newblock {\em Communications on Pure and Applied Mathematics},
  66(10):1629--1652, 2013.

\bibitem[Cek17a]{cekic2017calderon}
Mihajlo Ceki{\'c}.
\newblock Calder{\'o}n problem for connections.
\newblock {\em Communications in Partial Differential Equations},
  42(11):1781--1836, 2017.

\bibitem[Cek17b]{cekicym17}
Mihajlo Ceki\'c.
\newblock Calder{\'o}n problem for {Y}ang-{M}ills connections.
\newblock {\em arxiv preprint arXiv:1704.01362}, 2017.

\bibitem[Cek18]{cekicdeterminants18}
Mihajlo Ceki\'c.
\newblock Harmonic determinants and unique continuation.
\newblock {\em arxiv preprint arXiv:1803.09182}, 2018.

\bibitem[CL18]{cao2018determining}
Xinlin Cao and Hongyu Liu.
\newblock Determining a fractional {H}elmholtz system with unknown source and
  medium parameter.
\newblock {\em arXiv preprint arXiv:1803.09538}, 2018.

\bibitem[CLL19]{cao2017simultaneously}
Xinlin Cao, Yi-Hsuan Lin, and Hongyu Liu.
\newblock Simultaneously recovering potentials and embedded obstacles for
  anisotropic fractional {S}chr\"odinger operators.
\newblock {\em Inverse Problems and Imaging}, 13(1), 2019.

\bibitem[CS07]{caffarelli2007extension}
Luis Caffarelli and Luis Silvestre.
\newblock An extension problem related to the fractional {L}aplacian.
\newblock {\em Communications in Partial Differential Equations},
  32(8):1245--1260, 2007.

\bibitem[dFG92]{dFG92}
Djairo~G de~Figueiredo and Jean-Pierre Gossezt.
\newblock Strict monotonicity of eigenvalues and unique continuation.
\newblock In {\em Djairo G. de Figueiredo-Selected Papers}, pages 361--368.
  Springer, 1992.

\bibitem[DNPV12]{di2012hitchhiks}
Eleonora Di~Nezza, Giampiero Palatucci, and Enrico Valdinoci.
\newblock Hitchhiker's guide to the fractional {S}obolev spaces.
\newblock {\em Bulletin des Sciences Math{\'e}matiques}, 136(5):521--573, 2012.

\bibitem[Esk01]{eskinsystems01}
Gregory Eskin.
\newblock Global uniqueness in the inverse scattering problem for the
  {S}chr{\"o}dinger operator with external {Y}ang-{M}ills potentials.
\newblock {\em Communications in Mathematical Physics}, 222(3):503--531, 2001.

\bibitem[FF14]{FF14}
Mouhamed~Moustapha Fall and Veronica Felli.
\newblock Unique continuation property and local asymptotics of solutions to
  fractional elliptic equations.
\newblock {\em Communications in Partial Differential Equations},
  39(2):354--397, 2014.

\bibitem[FF15]{FF15}
M~Fall and V~Felli.
\newblock Unique continuation properties for relativistic {S}chr{\"o}dinger
  operators with a singular potential.
\newblock {\em Discrete and continuous dynamical systems}, 35(12):5827--5867,
  2015.

\bibitem[FI18]{FI18}
Silvia Frassu and Antonio Iannizzotto.
\newblock Strict monotonicity and unique continuation for general non-local
  eigenvalue problems.
\newblock {\em arXiv preprint arXiv:1808.09368}, 2018.

\bibitem[FKSU07]{ferreira2007determining}
David Dos~Santos Ferreira, Carlos~E Kenig, Johannes Sj{\"o}strand, and Gunther
  Uhlmann.
\newblock Determining a magnetic {S}chr{\"o}dinger operator from partial
  {C}auchy data.
\newblock {\em Communications in Mathematical Physics}, 271(2):467--488, 2007.

\bibitem[GLX17]{ghosh2017calderon}
Tuhin Ghosh, Yi-Hsuan Lin, and Jingni Xiao.
\newblock The {C}alder{\'o}n problem for variable coefficients nonlocal
  elliptic operators.
\newblock {\em Communications in Partial Differential Equations},
  42(12):1923--1961, 2017.

\bibitem[GO14]{GO14}
Loukas Grafakos and Seungly Oh.
\newblock The {K}ato-{P}once inequality.
\newblock {\em Communications in Partial Differential Equations},
  39(6):1128--1157, 2014.

\bibitem[GRSU18]{GRSU18}
Tuhin Ghosh, Angkana R{\"u}land, Mikko Salo, and Gunther Uhlmann.
\newblock Uniqueness and reconstruction for the fractional {C}alder{\'o}n
  problem with a single measurement.
\newblock {\em arXiv preprint arXiv:1801.04449}, 2018.

\bibitem[Gru14]{Gru14}
Gerd Grubb.
\newblock Local and nonlocal boundary conditions for $\mu$-transmission and
  fractional elliptic pseudodifferential operators.
\newblock {\em Analysis \& PDE}, 7(7):1649--1682, 2014.

\bibitem[Gru15]{grubb2015fractional}
Gerd Grubb.
\newblock Fractional {L}aplacians on domains, a development of
  {H}{\"o}rmander's theory of $\mu$-transmission pseudodifferential operators.
\newblock {\em Advances in Mathematics}, 268:478--528, 2015.

\bibitem[GSU16]{ghosh2016calder}
Tuhin Ghosh, Mikko Salo, and Gunther Uhlmann.
\newblock The {C}alder{\'o}n problem for the fractional {S}chr{\"o}dinger
  equation.
\newblock {\em arXiv preprint arXiv:1609.09248}, 2016.

\bibitem[GT15]{gilbarg2015elliptic}
David Gilbarg and Neil~S Trudinger.
\newblock {\em Elliptic partial differential equations of second order}.
\newblock Springer, 2015.

\bibitem[Hab16]{haberman2016unique}
Boaz Haberman.
\newblock Unique determination of a magnetic {S}chr{\"o}dinger operator with
  unbounded magnetic potential from boundary data.
\newblock {\em International Mathematics Research Notices}, 2018(4):1080--1128,
  2016.

\bibitem[HL17]{harrach2017nonlocal-monotonicity}
Bastian Harrach and Yi-Hsuan Lin.
\newblock Monotonicity-based inversion of the fractional {S}ch\"odinger
  equation.
\newblock {\em arXiv preprint arXiv:1711.05641}, 2017.

\bibitem[Isa17]{isakov2017inverse}
Victor Isakov.
\newblock {\em Inverse problems for partial differential equations}.
\newblock Springer; 3rd edition, 2017.

\bibitem[Jou09]{joud2009stability}
Hajer~Ben Joud.
\newblock A stability estimate for an inverse problem for the {S}chr{\"o}dinger
  equation in a magnetic field from partial boundary measurements.
\newblock {\em Inverse Problems}, 25(4):045012, 2009.

\bibitem[KU14]{krupchyk2014uniqueness}
Katsiaryna Krupchyk and Gunther Uhlmann.
\newblock Uniqueness in an inverse boundary problem for a magnetic
  {S}chr{\"o}dinger operator with a bounded magnetic potential.
\newblock {\em Communications in Mathematical Physics}, 327(3):993--1009, 2014.

\bibitem[KU17]{krupchyk2017inverse}
Katya Krupchyk and Gunther Uhlmann.
\newblock Inverse problems for magnetic {S}chr{\"o}dinger operators in
  transversally anisotropic geometries.
\newblock {\em arXiv preprint arXiv:1702.07974}, 2017.

\bibitem[LL18]{LL18}
R.-Y. Lai and Y.-H. Lin.
\newblock Global uniqueness for the fractional semilinear {S}chr\"odinger
  equation.
\newblock {\em Proc. AMS, accepted for publication}, 2018.

\bibitem[McL00]{mclean2000strongly}
William Charles~Hector McLean.
\newblock {\em Strongly elliptic systems and boundary integral equations}.
\newblock Cambridge University Press, 2000.

\bibitem[NSU95]{nakamura1995global}
Gen Nakamura, Ziqi Sun, and Gunther Uhlmann.
\newblock Global identifiability for an inverse problem for the
  {S}chr{\"o}dinger equation in a magnetic field.
\newblock {\em Mathematische Annalen}, 303(1):377--388, 1995.

\bibitem[RS17a]{RS17}
Angkana R{\"u}land and Mikko Salo.
\newblock The fractional {C}alder{\'o}n problem: low regularity and stability.
\newblock {\em arXiv preprint arXiv:1708.06294}, 2017.

\bibitem[RS17b]{Ru17quantitative}
Angkana R{\"u}land and Mikko Salo.
\newblock Quantitative approximation properties for the fractional heat
  equation.
\newblock {\em arXiv preprint arXiv:1708.06300}, 2017.

\bibitem[RS18a]{RS18exponential}
Angkana R{\"u}land and Mikko Salo.
\newblock Exponential instability in the fractional {C}alder{\'o}n problem.
\newblock {\em Inverse Problems}, 34(4):045003, 2018.

\bibitem[RS18b]{ruland2018lipschitz}
Angkana R{\"u}land and Eva Sincich.
\newblock Lipschitz stability for the finite dimensional fractional
  {C}alder\'on problem with finite {C}auchy data.
\newblock {\em arXiv preprint arXiv:1805.00866}, 2018.

\bibitem[RTZ18]{RTZ18}
Angkana R{\"u}land, Jamie~M Taylor, and Christian Zillinger.
\newblock Convex integration arising in the modelling of shape-memory alloys:
  Some remarks on rigidity, flexibility and some numerical implementations.
\newblock {\em arXiv preprint arXiv:1801.08503}, 2018.

\bibitem[R{\"u}l15]{ruland2015unique}
Angkana R{\"u}land.
\newblock Unique continuation for fractional {S}chr{\"o}dinger equations with
  rough potentials.
\newblock {\em Communications in Partial Differential Equations},
  40(1):77--114, 2015.

\bibitem[R{\"u}l17]{R17}
Angkana R{\"u}land.
\newblock On quantitative unique continuation properties of fractional
  {S}chr{\"o}dinger equations: Doubling, vanishing order and nodal domain
  estimates.
\newblock {\em Transactions of the American Mathematical Society},
  369(4):2311--2362, 2017.

\bibitem[Sal04]{salo2004inverse}
Mikko Salo.
\newblock {\em Inverse problems for nonsmooth first order perturbations of the
  {L}aplacian}.
\newblock Suomalainen Tiedeakatemia, 2004.

\bibitem[Sal06a]{salo2006inverse}
Mikko Salo.
\newblock Inverse boundary value problems for the magnetic {S}chr{\"o}dinger
  equation.
\newblock {\em arXiv preprint math/0611458}, 2006.

\bibitem[Sal06b]{salo2006semiclassical}
Mikko Salo.
\newblock Semiclassical pseudodifferential calculus and the reconstruction of a
  magnetic field.
\newblock {\em Communications in Partial Differential Equations},
  31(11):1639--1666, 2006.

\bibitem[Sal17]{salo2017fractional}
Mikko Salo.
\newblock The fractional {C}alder{\'o}n problem.
\newblock {\em arXiv preprint arXiv:1711.06103}, 2017.

\bibitem[Seo15]{Seo}
Ihyeok Seo.
\newblock Unique continuation for fractional {S}chr{\"o}dinger operators in
  three and higher dimensions.
\newblock {\em Proceedings of the American Mathematical Society},
  143(4):1661--1664, 2015.

\bibitem[Sun93]{sun1993inverse}
Zi~Qi Sun.
\newblock An inverse boundary value problem for {S}chr{\"o}dinger operators
  with vector potentials.
\newblock {\em Transactions of the American Mathematical Society},
  338(2):953--969, 1993.

\bibitem[Tol98]{tolmasky1998exponentially}
Carlos~F Tolmasky.
\newblock Exponentially growing solutions for nonsmooth first-order
  perturbations of the {L}aplacian.
\newblock {\em SIAM Journal on Mathematical Analysis}, 29(1):116--133, 1998.

\bibitem[Tzo08]{tzou2008stability}
Leo Tzou.
\newblock Stability estimates for coefficients of magnetic {S}chr{\"o}dinger
  equation from full and partial boundary measurements.
\newblock {\em Communications in Partial Differential Equations},
  33(11):1911--1952, 2008.

\bibitem[Uhl09]{uhlmann2009electrical}
Gunther Uhlmann.
\newblock Electrical impedance tomography and {C}alder{\'o}n's problem.
\newblock {\em Inverse problems}, 25(12):123011, 2009.

\bibitem[V{\`E}65]{VE65}
Marko~Iosifovich Vishik and Gregory~Il'ich {\`E}skin.
\newblock Equations in convolutions in a bounded region.
\newblock {\em Russian Mathematical Surveys}, 20(3):85--151, 1965.

\bibitem[Whi55]{whitney}
Hassler Whitney.
\newblock On singularities of mappings of {E}uclidean spaces. {I}. {M}appings
  of the plane into the plane.
\newblock {\em Ann. of Math. (2)}, 62:374--410, 1955.

\bibitem[Yu17]{yu2017unique}
Hui Yu.
\newblock Unique continuation for fractional orders of elliptic equations.
\newblock {\em Annals of PDE}, 3(2):16, 2017.

\end{thebibliography}

\end{document}